\documentclass[10pt]{amsart}
\usepackage{amsmath, amssymb}
\usepackage{dsfont}
\usepackage{amscd}
\usepackage[mathscr]{euscript} 
\usepackage[all]{xy}
\usepackage{url}
\usepackage{stmaryrd}
\usepackage{comment}
\usepackage[retainorgcmds]{IEEEtrantools}
\usepackage{hyperref}
\usepackage{orcidlink}
\usepackage{graphicx}
\usepackage{mathtools}
\usepackage[dvipsnames]{xcolor}
\usepackage{centernot}
\usepackage{marvosym}
\usepackage{tikz}
\usepackage{tikz-cd}
\usetikzlibrary{calc, arrows.meta, positioning}
\usepackage{xspace}
\usepackage{bbm}
\usepackage[title]{appendix}

\usepackage{tkz-fct}
\usepackage{soul}
\usepackage{lineno}
\usepackage[T1]{fontenc}
\usepackage[utf8]{inputenc}

\usepackage[english]{babel}
\usepackage[autostyle]{csquotes}
\usepackage[shortlabels]{enumitem}
\usepackage[backend=biber, style=alphabetic, sorting=nyt, backref=true]{biblatex}
\allowdisplaybreaks 

\title[Decompression and semigroup amenability]{Haar decompression and amenability \\ of Ellis flows}

\subjclass[2020]{Primary 37B02, Secondary 37A15, 54H15}
\keywords{Tame flows, Ellis semigroup, amenability, ergodic measures}

\author[D. M. HOFFMANN]{Daniel Max Hoffmann$^{\dagger}$\,
\orcidlink{0000-0002-4514-269X}}
\thanks{$^{\dagger}$SDG. The first author is supported
by the National Science Centre (Narodowe Centrum Nauki, Poland) 
grant no. 2021/43/B/ST1/00405.}
\address{$^{\dagger}$
Instytut Matematyki\\
Uniwersytet Warszawski\\
Warszawa\\
Poland}
\email{daniel.max.hoffmann@gmail.com}
\urladdr{{https://sites.google.com/site/danielmaxhoffmann/}}

\author[K. Krupi\'{n}ski]{Krzysztof Krupi\'{n}ski$^{\ddagger}$\,
\orcidlink{0000-0002-2243-4411}}
\address{$^{\ddagger}$Instytut Matematyczny Uniwersytetu Wroc{\l}awskiego, pl. Grunwaldzki 2, 50-384 Wroc{\l}aw, Poland}
\email{Krzysztof.Krupinski@math.uni.wroc.pl}

\date{\today}

 \DeclareMathOperator{\aut}{Aut} 
\DeclareMathOperator{\cl}{cl}

\DeclareMathOperator{\tp}{tp}

\DeclareMathOperator{\inv}{inv}
\DeclareMathOperator{\ev}{ev}

\DeclareMathOperator{\pr}{pr}

\DeclareMathOperator{\fGen}{fGen}

\DeclareMathOperator{\supp}{supp}
\DeclareMathOperator{\Prob}{Prob}
\DeclareMathOperator{\Erg}{Erg}

\newcommand{\unlhdm}{\mathrel{\unlhd_{\!m}}}

\newtheorem{theorem}{Theorem}[section]
\newtheorem{proposition}[theorem]{Proposition}
\newtheorem{lemma}[theorem]{Lemma}
\newtheorem{cor}[theorem]{Corollary}
\newtheorem{fact}[theorem]{Fact}
\theoremstyle{definition}
\newtheorem{definition}[theorem]{Definition}

\newtheorem{remark}[theorem]{Remark}
\newtheorem{question}[theorem]{Question}

\theoremstyle{remark}
\newtheorem*{theorem*}{Theorem}
\newtheorem*{cor*}{Corollary}

\theoremstyle{definition}
\theoremstyle{definition}

\theoremstyle{definition}

\theoremstyle{remark}

\newtheorem{clm}{Claim}
\newtheorem*{clm*}{Claim}

\AtEndEnvironment{proof}{\setcounter{clm}{0}}
\newenvironment{clmproof}[1][\proofname]{\proof[#1]}{\endproof}

\makeatletter
\providecommand*{\cupdot}{%
  \mathbin{%
    \mathpalette\@cupdot{}%
  }%
}
\newcommand*{\@cupdot}[2]{%
  \ooalign{%
    $\m@th#1\cup$\cr
    \sbox0{$#1\cup$}%
    \dimen@=\ht0 %
    \sbox0{$\m@th#1\cdot$}%
    \advance\dimen@ by -\ht0 %
    \dimen@=.5\dimen@
    \hidewidth\raise\dimen@\box0\hidewidth
  }%
}

\providecommand*{\bigcupdot}{%
  \mathop{%
    \vphantom{\bigcup}%
    \mathpalette\@bigcupdot{}%
  }%
}
\newcommand*{\@bigcupdot}[2]{%
  \ooalign{%
    $\m@th#1\bigcup$\cr
    \sbox0{$#1\bigcup$}%
    \dimen@=\ht0 %
    \advance\dimen@ by -\dp0 %
    \sbox0{\scalebox{2}{$\m@th#1\cdot$}}%
    \advance\dimen@ by -\ht0 %
    \dimen@=.5\dimen@
    \hidewidth\raise\dimen@\box0\hidewidth
  }%
}
\makeatother

\def\Ind#1#2{#1\setbox0=\hbox{$#1x$}\kern\wd0\hbox to 0pt{\hss$#1\mid$\hss}
\lower.9\ht0\hbox to 0pt{\hss$#1\smile$\hss}\kern\wd0}

\def\notind#1#2{#1\setbox0=\hbox{$#1x$}\kern\wd0
\hbox to 0pt{\mathchardef\nn=12854\hss$#1\nn$\kern1.4\wd0\hss}
\hbox to 0pt{\hss$#1\mid$\hss}\lower.9\ht0 \hbox to 0pt{\hss$#1\smile$\hss}\kern\wd0}

\newcommand{\CM}{{\mathcal M}}

\newcommand{\ext}{\mathrm{ext}}

\begin{document}

\begin{abstract}
Let $(X,G)$ be a tame flow and let $K$ be an Ellis group of its enveloping
semigroup $E(X,G)$. Although $K$ is a compact Hausdorff topological group in
its $\tau$-topology, the inclusion $K\hookrightarrow E(X,G)$ need not be
Borel. We show that normalized Haar measure on $K$ nevertheless determines,
via the Riesz--Markov theorem, a canonical regular Borel probability measure
$\mu_K$ on $E(X,G)$, called its Haar decompression.

Haar decompression gives an exact ergodicity description. For arbitrary
flow $(X,G)$, amenability of its Ellis flow is equivalent to hereditary
amenability of all finite powers $X^n$. 
In the tame setting 
$$\bigl(E(X,G),G\bigr)\text{ is amenable}
\quad\Longleftrightarrow\quad (X,G)\text{ is hereditarily amenable}.$$
Moreover, for the minimal left ideal $\CM$
in $E(X,G)$ containing $K$,
$\mu_K$ is $G$-invariant if and only if $(\CM,G)$ is amenable; when this holds, $\mu_K$ is the unique ergodic measure on $\CM$ and $\CM=\overline{K}$.
Then, we conclude that the ergodic measures of $E(X,G)$ are
precisely the Haar decompressions associated with amenable minimal left ideals.

We also prove that every minimal tame amenable flow is uniquely ergodic
and that every ergodic invariant measure on a tame flow has minimal
support. Consequently, every ergodic measure on a tame ambit
arises by evaluating a suitable Haar decompression.
\end{abstract}

\maketitle

\section{Introduction}
\subsection*{Haar decompression and its guiding principle}
Let $G$ be a discrete group. A $G$-flow is a compact Hausdorff space
equipped with an action of $G$ by homeomorphisms, and an ambit is a
pointed flow $(X,x_0,G)$ such that $\overline{Gx_0}=X$.

For a minimal equicontinuous flow, the unique invariant probability
measure is induced by the normalized Haar measure on the compact group
governing the action. In the paper, we ask how far this
compact-group origin of invariant measures survives beyond
equicontinuity. 
We show that it exists in tame dynamics, but in a less
direct form: the relevant compact groups are Ellis groups inside the
enveloping semigroup, and their Haar measures must first be regularized
before they can be transferred to the original flow. In this sense,
normalized Haar measures are the \emph{ergodic protoplasts} of
all ergodic, and in consequence of all invariant, measures. 

The enveloping semigroup $E(X,G)$ is the pointwise closure of $G$ in
$X^X$. Every minimal left ideal $\CM\unlhdm E(X,G)$ and every
idempotent $u^2=u\in\CM$ determine an Ellis group $K=u\CM$, equipped
with the $\tau$-topology \cite{Auslander,Glasner1976}. 
We work mainly with tame flows, whose
enveloping semigroups are characterized by the fragmentedness of their
elements; see
\cite{GlasnerMegrelishviliUspenskij,Glasner1,GM23,GM,KerrLi,Huang,CodHoff23}.
Tameness does not turn $E(X,G)$ into a topological semigroup, but it
provides enough measure-theoretic regularity to connect
its algebraic structure with invariant measures.

For a tame flow, every Ellis group $(K,\tau)$ is a compact Hausdorff
topological group
(Lemma~\ref{fact: RNC}, using \cite{Glasner2,CGK,BaZu}), and hence
carries normalized Haar measure $\mathfrak h_K$. The principal obstacle
is that the inclusion
$$j:(K,\tau)\longrightarrow E(X,G)$$
need not be Borel. Thus, the ordinary Borel pushforward
$j_*\mathfrak h_K$ might not work for our purposes. 
Appendix~\ref{sec:A1} gives a minimal metrizable tame flow for which $j$ is not Borel.

We prove nevertheless that, for every
$f\in C(E(X,G),\mathbb C)$, the restriction $f\circ j$ is measurable
for the completion of $\mathfrak h_K$. Consequently,
$$f\longmapsto\int_K f(j(k))\,d\mathfrak h_K(k)$$
defines a state on $C(E(X,G),\mathbb C)$. The Riesz--Markov theorem
therefore yields a unique regular Borel probability measure $\mu_K$ on
$E(X,G)$ representing this state. We call $\mu_K$ the
\emph{Haar decompression} associated with $K$
(Lemma~\ref{defprop:haar-decompression}). 
It follows from the definition that $\supp(\mu_K) \subseteq \overline{K} \subseteq \mathcal{M}$.

This construction leads to two complementary problems. The first is to
determine when $\mu_K$ is invariant under the left action of $G$ on
$E(X,G)$. The second is to determine which ergodic measures
on an ambit arise by evaluating Haar decompressions at its distinguished
point.

\subsection*{Main results}
Recall that a flow is \emph{amenable} if it carries an invariant
probability measure and is \emph{hereditarily amenable} if every
(nonempty) subflow is amenable. We say that $(X,G)$ is
\emph{finitely power-hereditarily amenable} if every subflow of every
finite power $X^n$ is amenable.

Our first result characterizes amenability of the Ellis flow:

\begin{theorem*}[Theorems~\ref{thm:ellis-flow-amenable} and
\ref{thm:hereditary-amenability-finite-powers}, and
Corollary~\ref{cor:ellis-amenability-characterization}]
For every flow $(X,G)$,
$$E(X,G)\text{ is amenable}
\quad\Longleftrightarrow\quad X\text{ is finitely power-hereditarily amenable}.$$
If $(X,G)$ is tame, these conditions are further equivalent to
hereditary amenability of $(X,G)$.
\end{theorem*}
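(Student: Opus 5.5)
The plan has two parts. First, for an arbitrary flow, prove that $E=E(X,G)$ is amenable if and only if $X$ is finitely power-hereditarily amenable. Second, in the tame case, show that hereditary amenability of $X$ already implies amenability of $E$.

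\emph{($\Rightarrow$).} Assume $E$ carries an invariant measure $\mu$. Fix $n$ and a nonempty subflow $Y\subseteq X^n$, and pick $\bar y=(y_1,\dots,y_n)\in Y$. The evaluation map $\ev_{\bar y}:E\to X^n$, $p\mapsto (py_1,\dots,py_n)$, is continuous and $G$-equivariant, with image $\overline{G\bar y}\subseteq Y$. Hence $(\ev_{\bar y})_*\mu$ is an invariant measure supported on $Y$, and $Y$ is amenable.

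\emph{($\Leftarrow$).} Embed $E\subseteq X^X$. Assume every subflow of every $X^n$ is amenable. For each finite $F\subseteq X$, let $\pi_F:E\to X^F$ be the restriction map. Its image $E_F=\pi_F(E)=\overline{G\,\bar x_F}$ is a subflow of $X^{|F|}$, so the set $I_F$ of invariant measures on $E$ with $(\pi_F)_*\mu$ invariant on $E_F$ is nonempty. Concretely, take an invariant $\nu_F$ on $E_F$ and lift it along the continuous surjection $\pi_F$; the lift need not be invariant.

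To fix this, I would work with the closed convex sets
\[
C_F=\{\mu\in\Prob(E): (\pi_F)_*\mu \text{ is $G$-invariant}\}.
\]
Each $C_F$ is nonempty, since every measure on $E_F$ has a preimage measure under a continuous surjection of compact spaces (Hahn--Banach / Riesz). Each $C_F$ is weak$^*$-closed, and $C_{F\cup F'}\subseteq C_F\cap C_{F'}$ because invariance of a projection passes to further projections. Compactness then gives some $\mu\in\bigcap_F C_F$. Every $\pi_F$-pushforward of $\mu$ is invariant, and functions factoring through finitely many coordinates are dense in $C(E)$ by Stone--Weierstrass. Therefore $\mu$ is $G$-invariant, which proves that $E$ is amenable.

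\emph{Tame case.} Finite power-hereditary amenability trivially implies hereditary amenability (take $n=1$), so the content is the converse: if $X$ is tame and hereditarily amenable, then every subflow of every $X^n$ is amenable. Note that $X^n$ is again tame with $E(X^n)\cong E(X)$. I would try to show that every minimal subflow of $X^n$ is amenable, which suffices because every subflow contains a minimal one. A minimal subflow $Z\subseteq X^n$ is $\CM\bar x$ for a minimal left ideal $\CM$ and a suitable point $\bar x$. So it suffices to show that $(\CM,G)$ is amenable, or at least that $\CM$ maps onto something carrying an invariant measure. Here I would use the paper's Haar decompression: $(\CM,G)$ is amenable if and only if $\mu_K$ is invariant. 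I would relate this to the minimal subflows $\CM x\subseteq X$, each of which is amenable by hypothesis and, being tame, is uniquely ergodic by the paper's later result on minimal tame amenable flows.

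The key step, and the one I expect to be the main obstacle, is the following. For each $x$, the pushforward $(\ev_x)_*\mu_K$ should be the unique invariant measure on $\CM x$. The aim is then to conclude that $\mu_K$ itself is invariant, because the functions $f\circ\pi_F$ separate enough points. However, passing from one coordinate to several is exactly the issue. I would first try to prove that invariance of $\mu_K$ can be tested on each coordinate $x$ separately, using that $K$ acts through $\tau$-continuous group translations. I would use Haar invariance of $\mathfrak h_K$ under $gu\in K$ together with the fragmentability of elements of $E$ from tameness. If this succeeds, $\mu_K$ is invariant, hence $E$ is amenable via $\CM$ (an invariant measure on a closed subflow of $E$ is an invariant measure on $E$), and by the equivalence above all finite powers are hereditarily amenable.
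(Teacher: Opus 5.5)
\textbf{Verdict: the general equivalence is correct, but the tame part has a genuine gap.}

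Your treatment of the general equivalence is correct. The direction ($\Rightarrow$) is the paper's argument. Your ($\Leftarrow$) repackages the paper's proof: instead of the approximate sets $D(g,F,\varepsilon)$, you use the directed family of closed sets $C_F$ on which the finite-coordinate projection is exactly invariant. Both rest on the same two ingredients: lifting an invariant measure from $\overline{G\bar x}$ to $E$, and uniform density of finite-coordinate functions. Your version is slightly cleaner. (As first written, $I_F$ should consist of all probability measures on $E$, not invariant ones; your definition of $C_F$ gets this right.)

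In the tame part, the implication ``hereditarily amenable $\Rightarrow$ finitely power-hereditarily amenable'' is the entire content, and the proposal gives no mechanism for it. The suggested mechanism, testing invariance of $\mu_K$ coordinate by coordinate, cannot work.
\begin{itemize}
\item The one-point marginals $(\ev_x)_*\mu_K$ do not determine the integrals of $\eta\mapsto\varphi(\eta x_1,\dots,\eta x_n)$. The joint law at $\bar x$ is $(\ev_{\bar x})_*\mu_K$, supported on the minimal subflow $\CM\bar x\subseteq X^n$. Proving it invariant is at least as hard as proving $\CM\bar x$ amenable, which is exactly the claim. Appendix~\ref{sec:A3} shows that amenability of all subflows of $X$ need not propagate to $X^2$, so genuinely multi-coordinate input is required.
\item Haar invariance of $\mathfrak h_K$ does not supply it. It controls $k\mapsto ugu\,k$ inside $K$, whereas invariance of $\mu_K$ concerns $k\mapsto gk$, which leaves $K$. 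The paper compares $F(gk)$ with $F(ugk)$ via Lemma~\ref{lemma:tameness.dominated.convergence} applied to an already given $\mu\in\Prob_G(\CM)$ (Lemma~\ref{lemma:K.closure}, Theorem~\ref{thm:haar-decompression-amenability}). That is, it presupposes amenability of $\CM$. Remark~\ref{rem:GPP-failure-of-density} shows that tameness plus Haar invariance alone do not make $\mu_K$ invariant.
\item Even your preliminary claim, that $(\ev_x)_*\mu_K$ is the invariant measure of $\CM x$, is proved directly only for metrizable $X$ (Lemma~\ref{thm:ordinary-ellis-glasner-regime}). In general the paper obtains it only downstream of the theorem at hand, via Corollary~\ref{cor:ergodic-haar-transfer}.
\end{itemize}

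The missing idea is the paper's joining argument.
\begin{enumerate}
\item Reduce to a minimal $W\subseteq X^n$ whose projections $Y_i$ are minimal and amenable.
\item By a countable-subgroup and separable $C^*$-subalgebra reduction, pass to metrizable minimal tame $H$-factors $Y_i'$ and a minimal $W'$ through which finitely many test functions factor.
\item In that setting (Lemma~\ref{lem:old_claim1}), each $Y_i'$ is a regular almost one-to-one extension of its maximal equicontinuous factor $Z_i$ \cite{Glasner2,FGJO21}.
\item The image $Z$ of $W'$ in $\prod_i Z_i$ is minimal and equicontinuous. Hence it is uniquely ergodic, and its measure $\mathfrak m_Z$ has marginals $\mathfrak m_i$.
\item Therefore $\mathfrak m_Z$-almost every fibre of $W'\to Z$ is a singleton, so the lift of $\mathfrak m_Z$ to $W'$ is unique and hence invariant.
\item A finite-intersection argument then returns to $W$.
\end{enumerate}
Equicontinuity of the product of maximal equicontinuous factors is what carries the joint, multi-coordinate information that your sketch lacks.
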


The first equivalence above reflects the finite-coordinate nature of
continuous functions on $E(X,G)$: functions depending on finitely many
coordinates form a uniformly dense subalgebra of
$C(E(X,G),\mathbb C)$. Then, the tame part of the theorem above
shows that ordinary
hereditary amenability already passes to all finite powers. Its
proof goes first through metrizable tame factors and then removes
separability. Moreover, amenability of the Ellis flow is automatically hereditary
(Remark~\ref{rem:ellis-amenability-is-hereditary}).

The above theorem decides when the Ellis flow admits some invariant
probability measure. On the other hand, Haar decompression gives a corresponding
criterion at the level of an individual minimal left ideal:

\begin{theorem*}[Theorem~\ref{thm:haar-decompression-amenability} and
Remark~\ref{rem:invariant-decompression-ergodic}]
Let $(X,G)$ be tame, let $\CM\unlhdm E(X,G)$, let
$u^2=u\in\CM$, and put $K=u\CM$. Then
$$
\mu_K\text{ is $G$-invariant}
\quad\Longleftrightarrow\quad
(\CM,G)\text{ is amenable}.
$$
When these equivalent conditions hold, $\mu_K$, regarded as a measure
on $\CM$, is its unique invariant probability measure. Moreover,
$$
\mu_K\in\Erg_G\bigl(E(X,G)\bigr),
\qquad
\supp(\mu_K)=\CM=\overline K.
$$
\end{theorem*}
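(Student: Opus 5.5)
The plan is to prove the easy direction first, then the hard direction by identifying an arbitrary invariant measure on $\CM$ with $\mu_K$, and to read off uniqueness, ergodicity and the support from that identification.

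\emph{Forward direction.} If $\mu_K$ is $G$-invariant, then since $\supp(\mu_K)\subseteq\overline K\subseteq\CM$ and $\CM$ is a closed subflow, $\mu_K$ restricts to an invariant probability measure on $\CM$. Hence $(\CM,G)$ is amenable.

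\emph{Converse, setup.} Assume $(\CM,G)$ is amenable and let $\nu$ be any $G$-invariant probability measure on $\CM$. The aim is to show $\nu=\mu_K$. By the Riesz--Markov theorem this means checking $\int f\,d\nu=\int_K f(j(k))\,d\mathfrak h_K(k)$ for all $f\in C(E(X,G),\mathbb C)$.

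\emph{Step 1: invariance under right translations.} First I would show that $\nu$ is invariant under right translations $R_p\colon x\mapsto xp$ for $p\in\CM$. These maps are continuous in $E(X,G)$ and commute with the left $G$-action, so $(R_p)_*\nu$ is again an invariant measure on $\CM$.

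\emph{Step 2: decomposing $\CM$.} Use the decomposition $\CM=\bigsqcup_{v}vK'$ over the idempotents $v\in\CM$. Push $\nu$ forward under $x\mapsto ux\in K$. This map is a right-translation-type map $L_u$; it is not continuous, but tameness, via the fragmentability of elements of $E(X,G)$, should make it universally measurable. The resulting measure on $(K,\tau)$ should be translation invariant because $K$ acts transitively on itself. By uniqueness of Haar measure it then equals $\mathfrak h_K$ on the relevant $\sigma$-algebra.

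\emph{Step 3: comparing integrals.} Next I would compare $\int f\,d\nu$ with $\int f(uk)\,d\mathfrak h_K(k)$, using $G$-invariance to average $f$ along the fibres $\{x: ux=k\}$. Concretely, I would invoke unique ergodicity of minimal tame amenable flows, which the abstract says is proved elsewhere in the paper. The flow $(\CM,G)$ is minimal, tame (as a subflow of the tame flow $E(X,G)$) and amenable, so it has a unique invariant measure $\nu$.

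\emph{Step 4: the key identity.} It remains to show that $\mu_K$ is $G$-invariant. Because $\nu$ is unique, it is fixed by every $R_p$, $p\in\CM$. Then
\[
\int f\,d\nu=\int_K\int f(xk)\,d\nu(x)\,d\mathfrak h_K(k).
\]
The goal is to rewrite this as $\int_K f(uk)\,d\mathfrak h_K(k)$ by moving $x$ toward $u$. I would try choosing $g_i\in G$ with $g_i\to u$, so that $\int f(g_ik)\,d\mathfrak h_K(k)$ converges to the $\mu_K$-integral by dominated convergence on $K$. The convergence $g_ik\to uk$ is pointwise in $E(X,G)$, and dominated convergence needs a sequence, or at least measurability of the limits, which is exactly where tameness (Bourgain--Fremlin--Talagrand, sequential approximability) enters.

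\emph{Main obstacle.} The interchange of integrals and limits in Step 4 is the step I expect to be hardest. The difficulty is controlling how the non-Borel map $j$ interacts with the measurability of the maps $x\mapsto xk$ and $k\mapsto xk$. Once the identity $\nu=\mu_K$ is established, $\mu_K$ is the unique invariant measure on $\CM$, so it is extreme and hence ergodic in $E(X,G)$.

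\emph{Support.} The support of $\mu_K$ is a nonempty closed invariant subset of the minimal flow $\CM$, so it equals $\CM$. Combined with $\supp(\mu_K)\subseteq\overline K\subseteq\CM$, this gives $\CM=\overline K$.
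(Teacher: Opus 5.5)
\textbf{There is a genuine gap: the proposal never proves $\nu=\mu_K$.} Your forward direction is correct. So are the closing deductions: uniqueness from Theorem~\ref{thm:nonmetric-unique-ergodicity}, ergodicity from uniqueness, and $\supp(\mu_K)=\CM$, hence $\overline K=\CM$. But all of these rest on $\nu=\mu_K$, and the step you call the main obstacle is the whole content of the converse.

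\emph{Step~2.} Invariance of $\zeta_*\nu$, where $\zeta(m)=um$, does not follow from $K$ acting transitively on itself. Since $ugu\cdot um=u(gu\,m)$, invariance under $ugu$ means $\int f(\zeta(gu\,m))\,d\nu(m)=\int f(\zeta(m))\,d\nu(m)$. That is, $\nu$ must be preserved by left multiplication by $gu$, which is a pointwise limit of a net in $G$; this is exactly the unjustified limit interchange. You also need $f\circ\zeta$ to be continuous on $\CM$ for $f\in C(K,\tau)$, not merely universally measurable. That requires $\overline K=\CM$ as an \emph{input}, after which $\tau$-continuity of $\zeta$ follows from \cite[Lemma~3.1]{KruPiRze}. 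In your plan, $\overline K=\CM$ only appears at the end.

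\emph{Step~4.} This step does not repair the gap.
\begin{itemize}
\item The displayed identity is trivial, since the inner integral is constant in $k$.
\item Fubini for $(x,k)\mapsto f(xk)$ on $\CM\times(K,\tau)$ is unjustified.
\item ``Moving $x$ toward $u$'' cannot work pointwise. For $x\in v\CM$ one has $x=v(ux)$ with $ux\in K$, so $\int_K f(xk)\,d\mathfrak h_K(k)=\int_K f(vk)\,d\mathfrak h_K(k)$ is an integral over the Ellis group $v\CM$. Its independence of $v$ is essentially $\mu_{v\CM}=\mu_{u\CM}$, which is as hard as the theorem (compare Appendix~\ref{subsec:split-circle-ordinary}).
\item Dominated convergence along a net $g_i\to u$ is unavailable, and there need not be a convergent sequence. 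Even if it held, it would only give $(g_i)_*\mu_K\to\mu_K$, which says nothing about $G$-invariance.
\end{itemize}

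\textbf{The missing idea} is the tameness substitute for dominated convergence, Lemma~\ref{lemma:tameness.dominated.convergence}. For tame $Y$, $\eta\in E(Y,G)$, $\nu\in\Prob(Y)$ and $f\in C(Y)$, it gives $\int f\circ\eta\,d\nu=\int f\,d\eta^\#(\nu)$. The proof uses $E(\Prob(Y),G)\cong E(Y,G)$ and the barycentric formula for the fragmented affine map $\lambda\mapsto\int f\,d\eta^\#(\lambda)$, applied to $\delta_*\nu$, whose barycenter is $\nu$. A $G$-invariant $\nu$ is fixed by every $\eta^\#$, so $\int f(\eta m)\,d\nu(m)=\int f\,d\nu$ for all $\eta$ and all continuous $f$.

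With this, the converse is short.
\begin{enumerate}
\item[(i)] Suppose $\overline K\neq\CM$, and take a continuous $f\geqslant0$ vanishing on $\overline K$ but not on $\CM$. Since $\nu$ has full support on the minimal flow $\CM$, this gives $0<\int f\,d\nu=\int f(um)\,d\nu(m)=0$. Hence $\overline K=\CM$, and $\zeta$ is $\tau$-continuous.
\item[(ii)] Apply the identity with $\eta=gu$ to $f\circ\zeta$. This shows $\zeta_*\nu$ is invariant under the $\tau$-dense set $\{ugu:g\in G\}$, so $\zeta_*\nu=\mathfrak h_K$.
\item[(iii)] Choose a Borel $\widetilde f$ equal to $f\circ j$ off an $\mathfrak h_K$-null Borel set. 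Then $\int f\,d\mu_K=\int_K\widetilde f\,d(\zeta_*\nu)=\int f(um)\,d\nu(m)=\int f\,d(u^\#\nu)=\int f\,d\nu$.
\end{enumerate}
Your Step~1 on right translations is not needed.
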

Here, for a flow $(Y,G)$, $\Erg_G(Y)$ denotes the collection of all ergodic $G$-invariant regular Borel probability measures on $Y$.
Together with the minimal-support result below, the preceding theorem
gives an exact description of the ergodic measures on the Ellis flow:
$$\Erg_G\big(E(X,G)\big)=\left\{\mu_{u\CM}:\CM\unlhd_m E,\quad u^2=u\in\CM,\quad
(\CM,G)\text{ is amenable} \right\}.$$
If $(X,G)$ is hereditarily amenable, then every minimal left ideal is
amenable, so all Haar decompressions occur in the above description.
The tameness assumption is essential:
Appendix~\ref{sec:A3} constructs a metrizable hereditarily amenable flow
whose Ellis flow is not amenable.

A second group of results establishes rigidity of invariant measures on
tame flows:

\begin{theorem*}[Theorem~\ref{thm:nonmetric-unique-ergodicity},
Proposition~\ref{prop:tame-ergodic-support-minimal}, and
Corollary~\ref{cor:ergodic-measures-minimal-subflows}]
Every minimal tame amenable flow is uniquely ergodic. Moreover, if
$(X,G)$ is tame, then the support of every
$\mu\in\Erg_G(X)$ is a minimal subflow, and the support map
$$
\Erg_G(X)\longrightarrow
\operatorname{Min}^{\mathrm{am}}_G(X),
\qquad
\mu\longmapsto\supp(\mu),
$$
is a bijection onto the collection of amenable minimal subflows of
$X$. In particular, distinct ergodic invariant measures have disjoint
supports.
\end{theorem*}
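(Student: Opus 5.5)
We prove the three parts in order: unique ergodicity of minimal tame amenable flows, minimality of supports of ergodic measures, and the bijection. The tools are the Ellis-flow results stated above and the Haar decompression.

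\emph{Unique ergodicity.} Let $(X,G)$ be minimal, tame and amenable, and fix $x_0\in X$. Choose $\CM\unlhdm E(X,G)$ and an idempotent $u\in\CM$ with $ux_0=x_0$. This is possible because $\CM x_0$ is a subflow of $X$, so by minimality $\CM x_0=X\ni x_0$; pick $p\in\CM$ with $px_0=x_0$, and let $u$ be the idempotent of the maximal subgroup containing $p$. Put $K=u\CM$. The evaluation map $\ev_{x_0}\colon \CM\to X$, $p\mapsto px_0$, is then a continuous surjective $G$-map.

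The key step is to show that $(\CM,G)$ is amenable. My first attempt is to lift an invariant measure $\nu$ on $X$. The preimage of $\nu$ under $\ev_{x_0}$ is a nonempty weak$^*$-compact convex set of probability measures on $\CM$, and $G$ acts on it affinely. However, amenability of $G$ is not assumed, so a fixed-point theorem is not available.

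The alternative route I will follow uses tameness. By the tame part of the first main theorem, it suffices that $X$ be hereditarily amenable, and for minimal $X$ this is just amenability, since the only subflow is $X$ itself. Then $E(X,G)$ is amenable by Theorem~\ref{thm:ellis-flow-amenable} together with Theorem~\ref{thm:hereditary-amenability-finite-powers}, and $\CM$ is amenable by Remark~\ref{rem:ellis-amenability-is-hereditary}. I expect this step to be the main obstacle: if the tame equivalence is itself proved using unique ergodicity, the argument becomes circular, and one must check that the metrizable-factor argument there is independent of it.

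Once $\CM$ is amenable, Theorem~\ref{thm:haar-decompression-amenability} says that $\mu_K$ is the unique invariant measure on $\CM$. Let $\nu$ be any invariant measure on $X$; we want $\nu=(\ev_{x_0})_*\mu_K$. Take an ergodic $\nu$ with an ergodic lift $\lambda$ on $\CM$; such a lift exists because the fibre over $\nu$ is a compact convex set whose extreme points are ergodic. Since $\mu_K$ is the unique invariant measure on $\CM$, we get $\lambda=\mu_K$. Hence every ergodic measure on $X$ equals $(\ev_{x_0})_*\mu_K$, and by Choquet decomposition so does every invariant measure.

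\emph{Minimal supports.} Let $\mu\in\Erg_G(X)$ and let $Y=\supp\mu$, a subflow of $X$; it is tame, being a subflow of a tame flow. Since $\mu$ is an invariant measure on $Y$, the flow $Y$ is amenable. Realize $\mu$ on the Ellis flow as follows. Choose $y\in Y$ generic for $\mu$; ergodic decomposition and the Choquet argument above supply a point with $\overline{Gy}=Y$, at least in the metrizable case (reduce to metrizable tame factors as in the paper, then pass to the inverse limit). Then $\ev_y\colon E(Y,G)\to Y$ is onto. The preimage of $\mu$ contains an ergodic measure, which by the Ellis-flow description equals $\mu_{u\CM}$ for some amenable $\CM$. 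Therefore $\mu=(\ev_y)_*\mu_{u\CM}$ is supported on $\ev_y(\overline{\CM})=\CM y$, which is a minimal subflow. Hence $Y$ is minimal.

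\emph{The bijection.} The support map lands in the amenable minimal subflows, since the support of an invariant measure carries that measure. It is surjective because every amenable minimal subflow carries an invariant measure, and hence an ergodic one. It is injective because, by the unique ergodicity proved above, distinct ergodic measures cannot share a minimal support. Finally, distinct minimal subflows are disjoint, so distinct ergodic measures have disjoint supports.
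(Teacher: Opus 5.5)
\textbf{There is a genuine gap: the unique-ergodicity and minimal-support parts both rely on lifting an invariant measure, which is not available here.}

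\textbf{Unique ergodicity.} Your worry about circularity is unfounded. Theorems~\ref{thm:ellis-flow-amenable}, \ref{thm:hereditary-amenability-finite-powers} and \ref{thm:haar-decompression-amenability} are proved without Theorem~\ref{thm:nonmetric-unique-ergodicity}; uniqueness in the last one comes from rerunning its argument. So you do get that $(\CM,G)$ is amenable with $\Prob_G(\CM)=\{\mu_K\}$.

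The gap is the descent to $X$. You need an \emph{invariant} lift of $\nu$ along $\ev_{x_0}\colon\CM\to X$. This is exactly the obstacle you identified in your first attempt. For non-amenable $G$, the fibre of $(\ev_{x_0})_*\colon\Prob_G(\CM)\to\Prob_G(X)$ over $\nu$ may be empty (think of a flow without invariant measures mapping onto a point). The fibre in $\Prob(\CM)$ is nonempty but has no invariant element a priori.

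Worse, $\Prob_G(\CM)$ is a singleton here, so this fibre is nonempty if and only if $\nu=(\ev_{x_0})_*\mu_K$. Asserting an ergodic lift is therefore equivalent to the unique ergodicity you are trying to prove.

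The paper never lifts. For metrizable $X$, the maximal equicontinuous factor $\pi\colon X\to Z$ is almost one-to-one \cite{Glasner2} and regular \cite{FGJO21}. Hence, by Lusin--Souslin, $\lambda_Z$ has exactly one probability lift to $X$, and every invariant measure on $X$ projects to $\lambda_Z$. The general case is reduced to this by the separable $C^*$-algebra construction. That construction yields a countable $G_0\leqslant G$ and a metrizable minimal tame $G_0$-factor on which two given distinct invariant measures remain distinct.

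\textbf{Minimal supports.} This argument uses the same unjustified lift, now to $E(Y,G)$ along $\ev_y$. Here even amenability of $E(Y,G)$ is unknown a priori, since $Y$ is not yet known to be hereditarily amenable.

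It is also circular. The description $\Erg_G(E)=\{\mu_{u\CM}\}$ in Remark~\ref{rem:invariant-decompression-ergodic}(2) is derived from Corollary~\ref{cor:ergodic-measures-minimal-subflows}. That is, it rests on minimality of supports of ergodic measures on the tame flow $E(Y,G)$, which is the statement being proved.

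In addition, ``pass to the inverse limit'' does not give a transitive point of $Y$ when $G$ is uncountable. In that case $Y$ need not be an inverse limit of metrizable $G$-factors.

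The missing idea is the following.
\begin{enumerate}
\item Take a proper subflow $C$ and $f$ vanishing on $C$ with $f(y_0)=1$. Pass to a metrizable tame $G_0$-factor $\pi\colon Y\to Z$ through which $f$ descends, built so that $\nu:=\pi_*\mu$-almost every point of $Z$ has dense $G_0$-orbit.
\item Let $u$ be an idempotent in a minimal left ideal $\CM_Z$ of $E(Z,G_0)$. Lemma~\ref{lemma:tameness.dominated.convergence} and Borel measurability of $u$ give $u_*\nu=u^\#\nu=\nu$.
\item Since $u^{-1}[\{z:uz=z\}]=Z$, the idempotent $u$ fixes $\nu$-almost every point.
\item A $u$-fixed point $z$ with dense orbit gives $Z=\CM_Z z$. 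So $Z$ is minimal, $\pi[C]=Z$ and $\widehat f=0$, a contradiction.
\end{enumerate}

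Given the first two parts, your argument for the bijection and disjointness of supports is fine.
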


In the metrizable minimal case, unique ergodicity follows from the
almost one-to-one structure theorem for minimal tame flows
\cite{GLASNER_2007,Glasner2} and the regularity theorem of
\cite{FGJO21}. We also identify the unique invariant measure directly
as the evaluation of every Haar decompression. A separable
$C^*$-algebra reduction then yields unique ergodicity without
metrizability. The minimal-support theorem uses a related reduction
together with an integration identity supplied by tameness, which
plays the role of dominated convergence for limits indexed by general
nets. A special case of the minimal-support phenomenon for tame
semicascades, under additional assumptions, was obtained in
\cite[Theorem~3.6]{Romanov16}.

We now make the ergodic protoplast principle precise. For an ambit
$(X,x_0,G)$, evaluation at the distinguished point is the continuous
$G$-map
$$\ev_{x_0}:E(X,G)\longrightarrow X,\qquad\eta\longmapsto\eta(x_0).$$
There are two notions of ergodic transfer. We say
that an ambit $(X,x_0,G)$ has the \emph{ergodic Ellis-transfer
property} if
$$(\ev_{x_0})_*\mu_K\in\Erg_G(X)$$
for every Ellis group $K$ of $E(X,G)$. We say that it has the
\emph{ergodic Ellis-realization property}
if every
$\mu\in\Erg_G(X)$ is equal to $(\ev_{x_0})_*\mu_K$ for at least one
Ellis group $K$.

\begin{theorem*}[Corollaries~\ref{cor:ergodic-ellis-realization} and \ref{cor:ergodic-haar-transfer}]
Every tame ambit has the ergodic Ellis-realization property, and every tame hereditarily amenable ambit has the ergodic Ellis-transfer property.
\end{theorem*}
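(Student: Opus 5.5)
We derive this from the preceding theorems in the introduction: the characterization of $G$-invariance of $\mu_K$, the minimal-support and unique-ergodicity results for tame flows, and the Ellis flow amenability result. Throughout, $(X,x_0,G)$ is a tame ambit and $E=E(X,G)$, which is again tame. The basic tool is that $\ev_{x_0}\colon E\to X$ is a continuous surjective $G$-map. Surjectivity holds because $\ev_{x_0}(E)$ is compact and contains $Gx_0$.

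\emph{Realization.} Fix $\mu\in\Erg_G(X)$. By the minimal-support theorem, $Y=\supp(\mu)$ is a minimal subflow of $X$, and it is amenable, being the support of an invariant measure.
\begin{itemize}
\item We need a minimal left ideal $\CM\unlhdm E$ such that $\ev_{x_0}(\CM)=Y$. Pick $y\in Y$ and $\eta\in E$ with $\eta(x_0)=y$. Let $\CM'$ be any minimal left ideal and put $\CM=\CM'\eta$; this is again a minimal left ideal. Then $\ev_{x_0}(\CM)=\CM'\eta x_0=\CM' y\subseteq Y$, since $Y$ is closed and $G$-invariant and hence $E$-invariant. Because $\ev_{x_0}(\CM)$ is a nonempty subflow of the minimal flow $Y$, we get $\ev_{x_0}(\CM)=Y$.
\item The harder part is to show that $(\CM,G)$ is amenable, since only its factor $Y$ is known to be amenable. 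My first attempt is this. $Y$ is a minimal tame amenable flow, so by the unique-ergodicity theorem (in its refined form, where the unique measure is identified as the evaluation of every Haar decompression) $\mu$ is the evaluation at a point of $Y$ of the Haar decompression of an Ellis group of $E(Y,G)$. I would then transfer this to $E$ using the natural surjection $E(X,G)\to E(Y,G)$, which maps $\CM$ onto a minimal ideal and Ellis groups onto Ellis groups.
\item If that transfer is awkward, there is a fallback. Take the pushforward $\nu=(\ev_{x_0})_*\mu_K$ for $K=u\CM$. Use the defining formula $\int f\,d\nu=\int_K f(k x_0)\,d\mathfrak h_K$ together with the fact that, for $g\in G$, $gK$ is an Ellis group of the same ideal. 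Then check $G$-invariance of $\nu$ directly via $\tau$-invariance of the Haar measure. This requires the map $k\mapsto k x_0$ to factor through $\tau$-continuous data.
\end{itemize}
Once $\nu$ is known to be an invariant measure on $Y$, unique ergodicity of $Y$ gives $\nu=\mu$.

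\emph{Transfer.} Suppose in addition that $X$ is hereditarily amenable, and let $K=u\CM$ be any Ellis group.
\begin{itemize}
\item By the Ellis amenability theorem in the tame case, $E$ is amenable, and this amenability is hereditary. Hence $(\CM,G)$ is amenable.
\item The invariance theorem then shows that $\mu_K$ is $G$-invariant and ergodic, and that it is the unique invariant measure on $\CM$.
\item Push forward along $\ev_{x_0}$. The measure $\nu=(\ev_{x_0})_*\mu_K$ is invariant and supported on the minimal subflow $Y=\ev_{x_0}(\CM)$. By the unique-ergodicity theorem, $Y$ is uniquely ergodic, so $\nu$ is the unique invariant measure on $Y$. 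A unique invariant measure on a subflow is extreme in $\Prob_G(X)$: any decomposition of it has components supported in $Y$, so both components coincide with $\nu$. Hence $\nu\in\Erg_G(X)$.
\end{itemize}

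\emph{Main obstacle.} The transfer half and the construction of $\CM$ in the realization half are routine. The hard step is the amenability of $\CM$, equivalently the invariance of $\mu_K$, in the realization half. Without hereditary amenability there is no a priori reason for $\CM$ to be amenable when only $Y$ is, and the whole realization argument hinges on this point.
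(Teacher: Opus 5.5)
Your transfer half is correct. It differs from the paper only at the end: you note that $\CM x_0$ is minimal, tame and amenable, hence uniquely ergodic, so $(\ev_{x_0})_*\mu_K$ is extreme. The paper instead pushes ergodicity of $\mu_K$ forward along $\ev_x$ and invokes Remark~\ref{rem:ergodicity.notnions}. Your construction of $\CM$ with $\CM x_0=Y$ is also fine; it amounts to the paper's choice $\CM\subseteq I_Y$.

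\textbf{The gap.} The realization half is not proved, because you place the difficulty in the wrong spot. Amenability of $\CM$, equivalently invariance of $\mu_K$, is not merely hard: it is unavailable in general. Nothing in the hypotheses prevents $X$ from also containing a nonamenable minimal subflow $Z$. For $z\in Z$, the map $\ev_z$ sends every minimal left ideal of $E(X,G)$ onto $Z$. So then no minimal left ideal is amenable, and by Theorem~\ref{thm:haar-decompression-amenability} no $\mu_K$ is invariant. Yet the statement still asserts $(\ev_{x_0})_*\mu_K=\mu$. A proof must therefore obtain invariance of the pushforward without invariance of $\mu_K$. Your fallback, which compares $\mathfrak h_K$ with Haar measure on $gK$, does not do this and is not an argument as written.

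\textbf{The fix.} Your ``first attempt'' is the right route, and it is the paper's Lemma~\ref{lem:ellis-transfer-minimal-support}. What it lacks is the identity that handles $x_0\notin Y$.

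Put $y_0:=u(x_0)\in Y$ and let $r:E(X,G)\to E(Y,G)$ be restriction. Since $u$ is a right identity on $\CM$, we have $k=ku$ for $k\in K$, hence $k(x_0)=r(k)(y_0)$.

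By Fact~\ref{fact:ellis-group-functoriality}, $r|_K$ is a continuous epimorphism onto the Ellis group $K_Y:=r[K]$, and $(r|_K)_*\mathfrak h_K=\mathfrak h_{K_Y}$. So for $f\in C(X,\mathbb C)$,
\[
\int_X f\,d\bigl((\ev_{x_0})_*\mu_K\bigr)=\int_K f\bigl(r(k)(y_0)\bigr)\,d\mathfrak h_K(k)=\int_{K_Y} f\bigl(\ell(y_0)\bigr)\,d\mathfrak h_{K_Y}(\ell)=\int_Y f|_Y\,d\bigl((\ev_{y_0})_*\mu_{K_Y}\bigr).
\]
The middle integrand is $\mathfrak h_{K_Y}$-measurable by Corollary~\ref{cor:Fj.measurable}. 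Composing with the $\tau$-continuous, measure-preserving map $r|_K$ preserves this measurability and the integral.

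The amenability you actually need is that of $r[\CM]$ in $E(Y,G)$, not that of $\CM$, and it is automatic. $Y$ is minimal, tame and amenable, hence hereditarily amenable. So your own transfer half, applied to the ambit $(Y,y_0,G)$, makes $(\ev_{y_0})_*\mu_{K_Y}$ invariant. Unique ergodicity of $Y$ then gives that it equals $\mu$.

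Do not appeal to a ``refined'' unique-ergodicity theorem here: Lemma~\ref{thm:ordinary-ellis-glasner-regime} is proved only for metrizable $Y$.
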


\subsection*{Examples}
The Appendix shows that our results are sharp. 
Appendix~\ref{sec:A1} gives a
minimal metrizable tame flow for which
$j:(K,\tau)\to E(X,G)$ is not Borel. 
Appendix~\ref{sec:A3}
gives a counterexample to the hereditary-amenability
characterization of the Ellis flow, i.e. 
a hereditarily amenable non-tame flow with non-amenable Ellis flow.

In the split-circle dihedral flow
of Appendix~\ref{subsec:split-circle-ordinary}, two Ellis groups are
interchanged by the action, whereas their Haar decompressions coincide
and are invariant; decompression therefore restores invariance which is
not visible on either group separately.

\subsection*{Model-theoretic motivation}
The construction is motivated by model-theoretic dynamics. Enveloping
semigroups of type-space flows have played a central role since the work of
Newelski \cite{newelski09,newelski12}. Model-theoretic
tameness (NIP) corresponds to dynamical tameness
\cite{Simon_2015,Iba16,KrRz,CodHoff23}, and Ellis groups are closely related to canonical
compact quotients such as $G/G^{00}$ and the Kim--Pillay Galois group; see
\cite{KruPil17,ArtemPierre,Krupinski2019,Krupinski2019a,KruPiRze,HruKruPi}. 

In
\cite{HRz}, jointly with Rzepecki, following ideas of \cite{ArtemPierre}, the first author described ergodic measures of
$(S_{\bar m}(M),\aut(M))$ for countable NIP theories in terms of the Haar measure on the Kim--Pillay Galois group. The present paper isolates 
a similar, purely dynamical mechanism: the Haar measure on an Ellis group is first regularized on the enveloping semigroup and then transferred by evaluation 
(cf. Subsection \ref{subsec:model.theory}).

The idea of inducing a measure on the enveloping semigroup from the Haar measure on an Ellis group appeared already in \cite[Section 4]{CGK} in the context of definable groups, but the situation there did not require regularization which we need in the abstract context (cf. Subsection \ref{subsec:model.theory}).

\subsection*{Organization of the paper}
Section~\ref{sec:preliminaries} fixes the conventions and recalls the
required facts about Ellis semigroups, the
$\tau$-topology, fragmented maps, and tame flows.
Section~\ref{sec:haar-decompression} establishes the required
measurability and constructs Haar decompressions.
Section~\ref{sec:unique.minimal} proves unique ergodicity of minimal
tame amenable flows and minimality of supports of ergodic measures.
Section~\ref{sec:invariance} proves the general finite-power
amenability criterion, its tame hereditary-amenability consequence,
and the invariance criterion for individual Haar decompressions.
The final section develops ergodic Ellis transfer and realization, proves the ergodic Ellis-realization property for every tame ambit, obtains the Choquet representation in the metrizable hereditarily amenable case and links our general results with the model-theoretic dynamics.
Appendix~\ref{sec:A} contains the non-Borel inclusion example, the
split-circle computation, and the counterexample showing that tameness
is essential.

\section{Preliminaries}
\label{sec:preliminaries}
Throughout, $G$ denotes a discrete group. All compact spaces are assumed to be Hausdorff, all probability measures are regular Borel measures.

\subsection{Flows, invariant measures and Gelfand duality}
\label{subsec:flows-and-invariant-measures}
A $G$-\emph{flow} is a compact space $X$ equipped with an action of $G$ by homeomorphisms. A \emph{subflow} is a nonempty closed $G$-invariant subset, and a \emph{factor map} is a continuous surjective $G$-map. A pointed flow $(X,x_0,G)$ is an \emph{ambit} if
$\overline{Gx_0}=X$, and a flow is \emph{minimal} if every orbit is dense.

For a compact space $X$, let $C(X):=C(X,\mathbb C)$, endowed with the
uniform norm and the involution $f^*(x):=\overline{f(x)}$.
Its positive cone is
\begin{align*}
C(X)_+
&:=
\{h^*h:h\in C(X)\}\\
&=
\{f\in C(X):f=f^*\text{ and }f(x)\geqslant0
\text{ for every }x\in X\}.
\end{align*}
A \emph{state} on a unital $C^*$-algebra $A$ is a complex-linear
functional $\Lambda:A\to\mathbb C$ such that
$\Lambda(a^*a)\geqslant 0$ for every $a\in A$ and
$\Lambda(1_A)=1$.

We write $\Prob(X)$ for the compact convex space of regular Borel
probability measures on $X$, equipped with the weak-$*$ topology inherited
from $C(X)^*$. If $\pi:X\to Y$ is Borel and $\mu\in\Prob(X)$, then
$\pi_*\mu$ denotes the pushforward of $\mu$. For $g\in G$, we write
$g_*\mu$ for the pushforward under $x\mapsto gx$, and put
$$\Prob_G(X):=\{\mu\in\Prob(X):g_*\mu=\mu\text{ for every }g\in G\}.$$
An invariant measure is \emph{ergodic} if it is an extreme point of
$\Prob_G(X)$, and we write
$$\Erg_G(X):=\operatorname{ext}\bigl(\Prob_G(X)\bigr).$$
The flow $(X,G)$ is \emph{amenable} if $\Prob_G(X)\neq\varnothing$, and it
is \emph{uniquely ergodic} if $\Prob_G(X)$ is a singleton. It is
\emph{hereditarily amenable} if every subflow of $X$ is amenable. The action
of $G$ on $C(X)$ is given by $(g\cdot f)(x):=f(g^{-1}x)$.

We briefly recall the form of Gelfand duality used below 
(see, for example, \cite{MurphyCstar}). 
For a unital commutative
$C^*$-algebra $A$, its spectrum $\operatorname{Spec}(A)$ is the set of
characters, i.e. unital $*$-homomorphisms $\chi:A\to\mathbb C$, equipped with the
weak-$*$ topology. It is compact Hausdorff, and the Gelfand transform
identifies $A$ isometrically with
$C(\operatorname{Spec}(A),\mathbb C)$. If
$A\subseteq C(Y,\mathbb C)$ is a unital $C^*$-subalgebra, then
$$
q_A:Y\longrightarrow\operatorname{Spec}(A),
\qquad
q_A(y)(a):=a(y),
$$
is a continuous surjection; it is equivariant whenever $A$ is
invariant under the acting group. An inclusion $A\subseteq B$ induces
the continuous surjection
$\operatorname{Spec}(B)\to\operatorname{Spec}(A)$ given by restriction
of characters. Moreover, $\operatorname{Spec}(A)$ is metrizable when
$A$ is separable.
Moreover, if $A$ is a subset of a $C^*$-algebra $\mathcal{C}$,
then $C^*(A)$ stands for the $C^*$-subalgebra of $\mathcal{C}$ generated by a $A$.

We also recall the barycentric terminology used in the proof of Lemma \ref{lemma:tameness.dominated.convergence}. 
Let $Q$ be a compact convex subset of a locally convex real topological vector
space. Every $\Lambda\in\Prob(Q)$ has a unique \emph{barycenter}
$b(\Lambda)\in Q$, characterized by
$$f\bigl(b(\Lambda)\bigr)=\int_Q f(q)\,d\Lambda(q)$$
for every continuous affine function $f:Q\to\mathbb R$. 
A universally measurable function $F:Q\to\mathbb R$ is said to satisfy the
\emph{barycentric formula} if
\[
F\bigl(b(\Lambda)\bigr)=\int_Q F(q)\,d\Lambda(q)
\qquad\text{for every }\Lambda\in\Prob(Q).
\]
By \cite[Theorem~4.21]{LMNS10}, every real-valued fragmented affine
function on a compact convex set satisfies the barycentric formula.

\subsection{Ellis semigroups, fragmented maps and tameness}
The \emph{enveloping semigroup} of a flow $(X,G)$ is
$$E(X,G):=\overline{\{g_X:g\in G\}}^{\,X^X},\qquad g_X(x):=gx,$$
where $X^X$ carries the product topology. Multiplication is composition: $(pq)(x):=p(q(x))$.
For every fixed $q\in E(X,G)$, the map $p\mapsto pq$ is continuous. For
$x\in X$, evaluation is the continuous map
$$\ev_x:E(X,G)\longrightarrow X,
\qquad
\ev_x(p):=p(x),$$
and $\ev_x[E(X,G)]=\overline{Gx}$.
A factor map $\pi:X\to Y$ induces a continuous surjective
$G$-equivariant semigroup homomorphism
$$\pi_E:E(X,G)\longrightarrow E(Y,G)$$
satisfying $\pi\circ p=\pi_E(p)\circ\pi$ for every $p\in E(X,G)$.

Put $E:=E(X,G)$. We write $\CM\unlhdm E$ for a minimal left ideal.
If $u^2=u\in\CM$, then
$K:=u\CM$ is the corresponding \emph{Ellis group}.
For $p\in E$ and $A\subseteq E$, define the Ellis circle operation by
\begin{IEEEeqnarray*}{rCl}
    p\circ A &:=& \bigl\{q\in E:\text{there are nets }(g_i)_i\subseteq G\text{ and }(a_i)_i\subseteq A \\
 & & \text{ such that }g_i\to p\text{ and }g_i a_i\to q\bigr\}.
\end{IEEEeqnarray*}
For $A\subseteq K$, put
$$\cl_\tau(A):=K\cap(u\circ A)=u(u\circ A).$$
The operator $\cl_\tau$ is a closure operator on $K$. The topology
determined by this closure operator is called the
\emph{$\tau$-topology}.
The $\tau$-topology is coarser than the topology on $K$ inherited
from $E$. Moreover, $(K,\tau)$ is always a quasi-compact
$T_1$ semitopological group, that is, multiplication is separately
continuous. No Hausdorffness is asserted here. See
\cite{Glasner1976} for the classical construction in the
$\beta G$ setting, and
\cite[Appendix A]{rzepecki2018}
for the general formulation used here.
Whenever $(K,\tau)$ is a compact Hausdorff topological group,
$\mathfrak h_K$ denotes its normalized Haar measure.

\begin{fact}
\label{fact:ellis-group-functoriality}
Let $(X,G)$ and $(Y,G)$ be flows, and let $\Phi:E(X,G)\longrightarrow E(Y,G)$ be a continuous surjective $G$-equivariant semigroup homomorphism.
Let $\CM\unlhdm E(X,G)$, $u^2=u\in\CM$,
and put $\mathcal N:=\Phi[\CM]$, $v:=\Phi(u)$, $K:=u\CM$, $L:=v\mathcal N=\Phi[K]$.
Then $\mathcal N\unlhdm E(Y,G)$, $v^2=v\in\mathcal N$,
and the restriction
$$
\Phi|_K:(K,\tau)\longrightarrow(L,\tau)
$$
is a continuous group epimorphism and a quotient map for the
$\tau$-topologies.
(In particular, the conclusion applies when $\Phi=\pi_E$ is induced
by a factor map $\pi:X\to Y$, and when $Y\subseteq X$ is a subflow
and $\Phi:E(X,G)\to E(Y,G)$ is the restriction map.)
If $(K,\tau)$ and $(L,\tau)$ are compact Hausdorff topological groups,
then $(\Phi|_K)_*\mathfrak h_K=\mathfrak h_L$.
\end{fact}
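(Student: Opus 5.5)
We prove the claims of Fact~\ref{fact:ellis-group-functoriality} in order: the algebraic statements, then continuity, then the quotient property, and finally the Haar measure identity.

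\emph{Algebraic part.} Since $\Phi$ is a surjective semigroup homomorphism, $\Phi[\CM]$ is a left ideal: $E(Y,G)\Phi[\CM]=\Phi[E(X,G)\CM]\subseteq\Phi[\CM]$. For minimality, let $I\subseteq\Phi[\CM]$ be a left ideal and pick $q\in I$, say $q=\Phi(p)$ with $p\in\CM$. Then $E(X,G)p=\CM$, so
$$E(Y,G)q=\Phi[E(X,G)p]=\Phi[\CM],$$
and hence $I=\Phi[\CM]$. Clearly $v^2=\Phi(u^2)=v$, and $L=v\mathcal N=\Phi(u)\Phi[\CM]=\Phi[u\CM]=\Phi[K]$. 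Therefore $\Phi|_K:K\to L$ is a surjective group homomorphism.

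\emph{Continuity.} Since $\cl_\tau$ is a closure operator, it suffices to show $\Phi[\cl_\tau(A)]\subseteq\cl_\tau(\Phi[A])$ for $A\subseteq K$. Take $q\in u\circ A$, witnessed by nets $g_i\to u$ and $g_ia_i\to q$. By continuity and equivariance of $\Phi$,
$$g_i\to v \quad\text{in } E(Y,G), \qquad g_i\Phi(a_i)=\Phi(g_ia_i)\to\Phi(q).$$
This gives $\Phi[u\circ A]\subseteq v\circ\Phi[A]$. Multiplying by $u$ and using multiplicativity of $\Phi$ yields $\Phi[u(u\circ A)]\subseteq v(v\circ\Phi[A])$, which is the required inclusion.

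\emph{Quotient property.} This is the main obstacle, and the point where compactness of $E(X,G)$ enters. We must show that if $B\subseteq L$ and $C:=(\Phi|_K)^{-1}[B]$ is $\tau$-closed, then $B$ is $\tau$-closed. Let $r\in\cl_\tau(B)$, witnessed by $h_i\to v$ and $h_ib_i\to r'$ with $vr'=r$ and $b_i\in B$. Lift each $b_i$ to some $c_i\in C$. Using compactness of $E(X,G)$, pass to a subnet along which
$$h_i\to p\in E(X,G)\quad\text{with }\Phi(p)=v, \qquad h_ic_i\to s\in E(X,G)\quad\text{with }\Phi(s)=r'.$$
The difficulty is that $p$ need not equal $u$. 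To handle this, use the standard identity $p\circ(u\circ C)$ together with the fact that $u\circ C$ is determined by the $\tau$-closure. Choose $p$ in the fiber and replace it by $up$ or a suitable element of $u\CM$; the relevant fact is that for $k\in K$ one has $k(u\circ C)\subseteq u\circ(kC)$ and that $\cl_\tau$ is translation invariant. Write $\Phi(p)=v=\Phi(u)$ with $up\in\CM$, and set $k:=u p u\in K$ (this uses $pu\in\CM$, so $upu\in u\CM$). Then $\Phi(k)=v$, and $u\circ C\supseteq\ldots$ produces an element $k^{-1}us\in\cl_\tau(C)=C$ whose image is $vr'=r$; the factor $k^{-1}$ absorbs the discrepancy because $\Phi(k)=v$. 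Hence $r\in B$. The delicate point is justifying the identity $p\circ A\subseteq$ (something expressible via $u\circ$) for $p$ in the $u$-fiber. Here we rely on the general formulation in \cite[Appendix A]{rzepecki2018}, where exactly such lemmas ($p\circ(q\circ A)\subseteq(pq)\circ A$) are established.

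\emph{Haar measures.} Assume $(K,\tau)$ and $(L,\tau)$ are compact Hausdorff groups. Then $\nu:=(\Phi|_K)_*\mathfrak h_K$ is a Radon probability measure on $L$, because the image of a compact regular measure under a continuous map between compact Hausdorff spaces is regular. It is left invariant: for $l=\Phi(k)$,
$$l_*\nu=(\Phi|_K)_*(k_*\mathfrak h_K)=\nu.$$
By uniqueness of normalized Haar measure, $\nu=\mathfrak h_L$.
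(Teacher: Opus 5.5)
Your algebraic part, your continuity argument (via $\Phi[u\circ A]\subseteq v\circ\Phi[A]$), and your Haar-measure step are correct. The Haar step is exactly the paper's argument; for everything else the paper simply cites \cite[Facts~2.3 and~2.5]{KLM21}, so a self-contained proof would be a genuine alternative. However, the quotient property, which is the only nontrivial part, is not actually proved. At the decisive point your text reads ``$u\circ C\supseteq\ldots$ produces an element $k^{-1}us\in\cl_\tau(C)$'', so the needed inclusion is never established. The facts you invoke there ($k(u\circ C)\subseteq u\circ(kC)$ and translation invariance of $\cl_\tau$) do not touch the real difficulty: $s$ arises along a net converging to $p$, not to $u$. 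Also, ``$up\in\CM$'' is unjustified, since $\CM$ is only a left ideal. What holds, and what you actually need, is $pu\in\CM$.

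Your choice $k:=upu\in K$ is nevertheless the right one. The gap closes using only three facts: $qA\subseteq q\circ A$, monotonicity of $q\circ A$ in $A$, and the composition rule $q\circ(q'\circ A)\subseteq(qq')\circ A$ from \cite[Appendix~A]{rzepecki2018}.
\begin{enumerate}
\item Since $C\subseteq K$, we have $C=uC\subseteq u\circ C$. Hence $s\in p\circ C\subseteq p\circ(u\circ C)\subseteq(pu)\circ C$.
\item Then $us\in u\circ\bigl((pu)\circ C\bigr)\subseteq k\circ C$.
\item With $k^{-1}$ the inverse of $k$ in $K$, we get $k^{-1}us\in k^{-1}\circ(k\circ C)\subseteq u\circ C$.
\item As $uk^{-1}=k^{-1}$, this gives $k^{-1}us=u(k^{-1}us)\in u(u\circ C)=\cl_\tau(C)=C$.
\item Finally, $\Phi(k)=v\Phi(p)v=v$ is the identity of $L$, so $\Phi(k^{-1}us)=v\,v\,r'=r$. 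Hence $r\in\Phi[C]=B$.
\end{enumerate}
With this chain written out, your proof is complete.
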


\begin{proof}
The assertions concerning minimal left ideals, idempotents, Ellis
groups, and the quotient property are
\cite[Facts~2.3 and~2.5]{KLM21}. The last assertion follows from the
uniqueness of normalized Haar measure on a compact Hausdorff
topological group.
\end{proof}

For \(\eta\in E(X,G)\) and
\(\bar x=(x_1,\ldots,x_n)\in X^n\), we write
$$\eta(\bar x):=\bigl(\eta(x_1),\ldots,\eta(x_n)\bigr).$$

Let $X$ be a compact space and let $Y$ be a uniform space. A map
$f:X\to Y$ is \emph{fragmented} if, for every nonempty set
$A\subseteq X$ and every entourage $V$ of $Y$, there is a nonempty
relatively open set $U\subseteq A$ such that
$$(f(x),f(y))\in V$$
for all $x,y\in U$.
When $Y$ is metrizable, this is equivalent to requiring that, for every
$\varepsilon>0$ and every nonempty closed $A\subseteq X$, there is a
nonempty relatively open $U\subseteq A$ with
$$\operatorname{diam}f[U]<\varepsilon.$$
In particular, a bounded function $f:X\to\mathbb C$ is fragmented if
and only if both $\operatorname{Re}f$ and $\operatorname{Im}f$ are
fragmented.
A flow $(X,G)$ is \emph{tame} if every element of $E(X,G)$ is a
fragmented self-map of $X$. This is one of the standard equivalent
definitions of tameness; see \cite[Definition~3.1]{GM23}. We use this
characterization throughout.

We equip every compact Hausdorff space with its unique compatible
uniformity. A flow $(X,G)$ is \emph{equicontinuous} if the family
$$
\{g_X:g\in G\}\subseteq X^X
$$
is equicontinuous with respect to this uniformity, where $g_X(x):=gx$.

\begin{lemma}
\label{lem:finite-powers-of-tame-flows}
Let $(X,G)$ be a flow.
\begin{enumerate}[(1)]
\item If $(X,G)$ is equicontinuous, then, for every cardinal $\kappa$,
the flow $X^\kappa$, equipped with the diagonal $G$-action, is
equicontinuous.

\item If $(X,G)$ is tame, then, for every cardinal $\kappa$, the flow
$X^\kappa$, equipped with the diagonal $G$-action, is tame.

\item If $(X,G)$ is tame, then its enveloping semigroup $E(X,G)$,
equipped with the natural left $G$-action, is tame.
\end{enumerate}

Consequently, if $X$ is metrizable and $(X,G)$ is tame, then every
subflow of every finite power $X^n$ is metrizable and tame.
\end{lemma}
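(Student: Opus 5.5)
We prove the three items separately and then deduce the final sentence.

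For (1), fix an entourage of $X^\kappa$. By the definition of the product uniformity, we may take it to be a basic entourage
$$W=\{(\bar x,\bar y):(x_\alpha,y_\alpha)\in V\text{ for }\alpha\in F\},$$
where $F\subseteq\kappa$ is finite and $V$ is an entourage of $X$. Equicontinuity of $\{g_X\}$ at each point gives the following. For every $\bar x$ and every $\alpha\in F$ there is a neighbourhood $U_\alpha$ of $x_\alpha$ such that $(gx_\alpha,gy)\in V$ for all $y\in U_\alpha$ and all $g\in G$. The product neighbourhood $\prod_{\alpha\in F}U_\alpha\times X^{\kappa\setminus F}$ then witnesses equicontinuity of the diagonal action at $\bar x$ for $W$. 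Hence $X^\kappa$ is equicontinuous.

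For (2), the diagonal embedding identifies $E(X^\kappa,G)$ with $E(X,G)$. Concretely, the map $p\mapsto p^{\times\kappa}$ is a homeomorphism of $E(X,G)$ onto $E(X^\kappa,G)$: the map $g_X\mapsto g_{X^\kappa}$ extends continuously because convergence in $(X^\kappa)^{X^\kappa}$ is coordinatewise. So it suffices to show the following. If $p\colon X\to X$ is fragmented, then $P:=p^{\times\kappa}\colon X^\kappa\to X^\kappa$ is fragmented.

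Again take a basic entourage $W$ determined by a finite $F$ and an entourage $V$, and let $A\subseteq X^\kappa$ be nonempty. The pair $(P\bar x,P\bar y)$ lies in $W$ as soon as $(p x_\alpha,p y_\alpha)\in V$ for each $\alpha\in F$. Each map $\bar x\mapsto p(x_\alpha)=p\circ\pr_\alpha(\bar x)$ is fragmented on $X^\kappa$, because it is the composition of a fragmented map with a continuous map. To see this, given $A$, we need an open $U\subseteq A$ on which $p\circ\pr_\alpha$ is $V$-small. Passing to closures, we may take $A$ closed, hence compact. We then use the standard fact, valid for compact domains, that $f$ is fragmented iff $f|_B$ has a point of continuity relative to a uniformity for every closed $B$; for the composition with $\pr_\alpha$ one reduces via a closed-set/Baire argument. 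The cleanest route is the known result that finite products of fragmented maps are fragmented on compact spaces, e.g.\ \cite{GM23}. We iterate over $\alpha\in F$, shrinking the relatively open set successively: first find $U_1\subseteq A$ open on which the first coordinate map is $V$-small, then $U_2\subseteq U_1$, and so on. This gives an open set of $A$ on which $P$ is $W$-small. The main subtlety is the composition step, i.e.\ that $p\circ\pr_\alpha$ is fragmented on an arbitrary subset of $X^\kappa$. I will handle it by citing the known closure properties of fragmented maps under composition with continuous maps on compact spaces \cite{GlasnerMegrelishviliUspenskij,GM23}.

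For (3), $E(X,G)$ is a subflow of $X^X$ with the diagonal action $p\mapsto g_X\circ p$. By (2), $X^X$ is tame, and tameness passes to subflows: the enveloping semigroup of a subflow consists of restrictions, and restrictions of fragmented maps are fragmented. Hence $E(X,G)$ is tame.

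Finally, let $X$ be metrizable and tame. Then $X^n$ is metrizable as a finite product, and it is tame by (2). Every subflow of $X^n$ is a closed subset, hence metrizable, and tame by the subflow remark above.
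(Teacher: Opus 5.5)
Your proof is correct. Part (1) is essentially the paper's argument; you use pointwise equicontinuity where the paper uses the uniform version, and these are equivalent on compact spaces.

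For (2) and (3) the paper gives no argument of its own and cites \cite[Lemma~5.4]{GlasnerMegrelishvili2012}. You instead prove both directly from the definition of tameness adopted in the paper, namely that every element of $E$ is fragmented:
\begin{enumerate}[(a)]
\item The continuous map $p\mapsto p^{\times\kappa}$ carries $E(X,G)$ onto $E(X^\kappa,G)$ by compactness, so it suffices to show that each $p^{\times\kappa}$ is fragmented.
\item You get this by shrinking a relatively open set finitely many times, once for each coordinate in the finite support of a basic entourage.
\item Item (3) then follows because $E(X,G)$ is a subflow of $X^X=X^{|X|}$ and restrictions of fragmented maps are fragmented.
\end{enumerate}
Your route gives a self-contained, elementary proof that does not rely on the equivalence of the various characterizations of tameness used in the cited source. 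The citation gains only brevity.

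One correction to your presentation. The step you call the main subtlety, and propose to outsource to the literature via point-of-continuity and Baire-type arguments, is immediate from the definition, because fragmentedness quantifies over arbitrary nonempty subsets. Given a nonempty $A\subseteq X^\kappa$, apply fragmentedness of $p$ to $\pr_\alpha[A]$. This gives an open $O\subseteq X$ with $O\cap\pr_\alpha[A]\neq\varnothing$ and $p[O\cap\pr_\alpha[A]]$ $V$-small. Then $A\cap\pr_\alpha^{-1}[O]$ is a nonempty relatively open subset of $A$ on which $p\circ\pr_\alpha$ is $V$-small. No compactness, closures or citations are needed. The same observation justifies your iteration over $\alpha\in F$ exactly as you state it, since each $U_{k+1}$ may be chosen relatively open inside the arbitrary nonempty set $U_k$, and is then relatively open in $A$.
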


\begin{proof}
(1) Let $V$ be an entourage of $X^\kappa$. By the definition of the
product uniformity, there are a finite set $F\subseteq\kappa$ and
entourages $V_\alpha$ of $X$, for $\alpha\in F$, such that
$$
(x_\alpha,y_\alpha)\in V_\alpha
\text{ for every }\alpha\in F
\quad\Longrightarrow\quad
(x,y)\in V.
$$
For every $\alpha\in F$, equicontinuity of $(X,G)$ gives an entourage
$U_\alpha$ of $X$ such that
$$
(s,t)\in U_\alpha
\quad\Longrightarrow\quad
(gs,gt)\in V_\alpha
\text{ for every }g\in G.
$$
The basic entourage of $X^\kappa$ determined by the $U_\alpha$,
$\alpha\in F$, witnesses equicontinuity of the diagonal action on
$X^\kappa$.

For (2) and (3), see
\cite[Lemma~5.4 and the sentence immediately following its proof]
{GlasnerMegrelishvili2012}.
\end{proof}

\begin{fact}[Theorem~8.2 in \cite{GlasnerMegrelishvili2013}]\label{fact:tame-injective}
If $(Y,G)$ is tame, then the canonical restriction homomorphism
$$\operatorname{res}:E(\operatorname{Prob}(Y),G)\longrightarrow E(Y,G)$$
is a topological semigroup (and $G$-flow) isomorphism. 
We set $\eta^\#:=\operatorname{res}^{-1}(\eta)$ for every $\eta\in E(Y,G)$.
\end{fact}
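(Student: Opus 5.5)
The strategy is to show that $\operatorname{res}$ is a continuous, $G$-equivariant semigroup epimorphism which is also injective. Compactness then finishes the argument. We regard $Y$ as a subflow of $\operatorname{Prob}(Y)$ via $y\mapsto\delta_y$. Restriction to this closed invariant subset is then a continuous surjective $G$-equivariant semigroup homomorphism $E(\operatorname{Prob}(Y),G)\to E(Y,G)$; this is the subflow case of the general formalism recalled before Fact~\ref{fact:ellis-group-functoriality}. Since $E(\operatorname{Prob}(Y),G)$ is compact and $E(Y,G)$ is Hausdorff, a continuous bijection between them is a homeomorphism. So everything reduces to injectivity.

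For injectivity I would fix $p\in E(\operatorname{Prob}(Y),G)$, a net $g_i\to p$, and put $\eta:=\operatorname{res}(p)$, so that $g_iy\to\eta(y)$ for every $y\in Y$. The goal is the identity
$$
p(\mu)(f)=\int_Y f\bigl(\eta(y)\bigr)\,d\mu(y)\qquad(\mu\in\operatorname{Prob}(Y),\ f\in C(Y)),
$$
which shows that $p$ is determined by $\eta$. By definition,
$$
p(\mu)(f)=\lim_i\int_Y f(g_iy)\,d\mu(y),
$$
and the integrands converge pointwise to $f\circ\eta$. Hence the content of the claim is an interchange of a net limit with integration, which fails in general (dominated convergence is only available for sequences). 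Tameness must replace it.

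To carry out the interchange, I would take $f$ real-valued and consider
$$
F:\operatorname{Prob}(Y)\to\mathbb R,\qquad F(\mu):=p(\mu)(f).
$$
Each map $\mu\mapsto g_i\mu$ is affine, so the pointwise limit $p$ is affine on $\operatorname{Prob}(Y)$, and therefore $F$ is affine. Moreover $F$ is bounded and $F(\delta_y)=f(\eta(y))$. If $F$ is fragmented, the barycentric formula \cite[Theorem~4.21]{LMNS10} applies. For $\mu\in\operatorname{Prob}(Y)$ we take $\Lambda:=\mu$ pushed forward along $y\mapsto\delta_y$; its barycenter is $\mu$. The formula then gives
$$
F(\mu)=\int F(\delta_y)\,d\mu(y)=\int f\circ\eta\,d\mu,
$$
as required. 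The function $f\circ\eta$ is fragmented, because $\eta$ is fragmented by tameness and $f$ is uniformly continuous, so it is universally measurable and the integral makes sense.

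The main obstacle is proving that $F$ is fragmented on $\operatorname{Prob}(Y)$. Here $F$ is the pointwise limit of the continuous affine functions $\mu\mapsto\int f\circ g_i\,d\mu$, that is, a point in the pointwise closure of the affine extensions of the orbit family $\{f\circ g:g\in G\}$. My first attempt would use the Bourgain--Fremlin--Talagrand/Rosenthal characterization of tameness: $(Y,G)$ is tame if and only if, for each $f\in C(Y)$, the family $\{f\circ g\}$ contains no independent sequence. Equivalently, its pointwise closure consists of fragmented functions.

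I would then show that this property passes to the affine extensions on $\operatorname{Prob}(Y)$. The intended tool is Talagrand's/Haydon's result that the $\ell_1$-free (Rosenthal) property of a bounded family on $Y$ transfers to its weak$^*$-continuous affine extensions on $\operatorname{Prob}(Y)$. Equivalently, tameness of $Y$ implies tameness of $(\operatorname{Prob}(Y),G)$, after which $F$ is fragmented as a function of an element of $E(\operatorname{Prob}(Y),G)$. If that transfer is unavailable, I would instead reduce to metrizable factors of $Y$ via separable $G$-invariant subalgebras of $C(Y)$. In the metrizable case fragmented functions are Baire class $1$, so the limits can be taken along sequences, and ordinary dominated convergence gives the identity directly.
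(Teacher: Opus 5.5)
The paper does not prove this statement. It imports it from \cite[Theorem~8.2]{GlasnerMegrelishvili2013}, and then uses it in the proof of Lemma~\ref{lemma:tameness.dominated.convergence} to get tameness of $(\Prob(Y),G)$, and from that the identity $\int_Y f\circ\eta\,d\nu=\int_Y f\,d\eta^\#(\nu)$ via \cite[Theorem~4.21]{LMNS10}. Your argument runs in the opposite order. You establish that same barycentric identity for an arbitrary $p\in E(\Prob(Y),G)$ with $\operatorname{res}(p)=\eta$, and injectivity of $\operatorname{res}$ follows; compactness then upgrades the continuous bijective equivariant homomorphism to an isomorphism. Your reduction steps are correct: restriction is a continuous equivariant epimorphism, $p$ is affine, $p(\delta_y)=\delta_{\eta(y)}$, and $b(\delta_*\mu)=\mu$. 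The route is sound and self-contained modulo Rosenthal's theorem and \cite[Theorem~4.21]{LMNS10}. Compared with the citation, it makes the ``dominated convergence for nets'' mechanism the engine of the isomorphism rather than a consequence of it.

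The crux needs care, and here your sketch is imprecise. You cannot borrow tameness of $(\Prob(Y),G)$ from the paper, since the paper derives it from this very Fact, so that would be circular. You also do not need it: your ``equivalently'' is not an equivalence, and no result of Talagrand or Haydon is involved. Here is what suffices. For $f\in C(Y,\mathbb R)$ put $\Phi_f:=\{\widehat{f\circ g}:g\in G\}$, where $\widehat h(\nu):=\int_Y h\,d\nu$.
\begin{itemize}
\item Since $\{f\circ g:g\in G\}$ has no independent sequence on $Y$, Rosenthal's dichotomy gives every sequence in it a subsequence converging pointwise on $Y$.
\item Dominated convergence, which is valid for sequences, then makes the corresponding subsequence of $\Phi_f$ converge pointwise on $\Prob(Y)$.
\item Hence $\Phi_f$ has no independent sequence on the compact space $\Prob(Y)$.
\item By the standard tree argument, every element of its pointwise closure is therefore fragmented. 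In particular $F=\widehat f\circ p=\lim_i\widehat{f\circ g_i}$ is fragmented, which is exactly what the barycentric formula needs.
\end{itemize}

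Your fallback does not work as written and should be dropped. Being of Baire class one does not by itself make $f\circ\eta$ a limit of a sequence $f\circ g_n$. Even if such a sequence existed, it would say nothing about $p$, which is known only as the limit of the net $(g_i)$; determining $p$ from $\eta$ is the whole point. You would instead need the Fr\'echet--Urysohn property applied to $\Phi_f$ on $\Prob(Y)$. Moreover, for uncountable $G$, the $G$-invariant subalgebra generated by $f$ need not be separable, so the metrizable reduction is not available as stated.
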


For further background on tame flows and Ellis semigroups, see
\cite{Auslander,Glasner1976,Glasner1,GM23,CodHoff23}.

\section{Haar decompression on the Ellis semigroup}
\label{sec:haar-decompression}
We first establish the measurability input needed to define Haar
decompression. The argument combines a descent property for fragmented
functions with the measurability of bounded fragmented functions for the completion of every regular Borel probability measure.

\begin{lemma}\label{lem:descent-complex}
Let $\pi:L\to K$ be a continuous surjection of compact Hausdorff spaces.
Let $f:K\to\mathbb C$ be bounded. If $f\circ\pi:L\to\mathbb C$ is
fragmented, then $f$ is fragmented.
\end{lemma}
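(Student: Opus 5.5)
We reduce to real-valued functions: $f$ is fragmented if and only if $\operatorname{Re}f$ and $\operatorname{Im}f$ are, and $(\operatorname{Re}f)\circ\pi=\operatorname{Re}(f\circ\pi)$, similarly for the imaginary part. Both targets are metrizable, so we may use the metric form of fragmentedness: for every $\varepsilon>0$ and every nonempty closed $A\subseteq K$ we must find a nonempty relatively open $U\subseteq A$ with $\operatorname{diam}f[U]<\varepsilon$. Taking closures costs nothing, since $\operatorname{diam}f[U\cap A]\leqslant\operatorname{diam}f[U]$ for open $U$.

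Fix a nonempty closed $A\subseteq K$ and $\varepsilon>0$. The preimage $\pi^{-1}[A]$ is closed in $L$, and it is nonempty because $\pi$ is surjective. By Zorn's lemma, applied to the closed subsets $B\subseteq\pi^{-1}[A]$ with $\pi[B]=A$ ordered by reverse inclusion, we choose a minimal such $B$. The lemma applies because a decreasing chain has intersection still mapping onto $A$: for $a\in A$, the fibres $\pi^{-1}(a)\cap B_i$ form a chain of nonempty compact sets, so their intersection is nonempty. The restriction $\pi|_B:B\to A$ is then an irreducible closed surjection. In other words, for every nonempty relatively open $W\subseteq B$, the set $A\setminus\pi[B\setminus W]$ is nonempty, and it is relatively open in $A$ because $\pi[B\setminus W]$ is compact and hence closed.

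Now apply fragmentedness of $f\circ\pi$ to the closed set $B$. This gives a nonempty relatively open $W\subseteq B$ with $\operatorname{diam}(f\circ\pi)[W]<\varepsilon$. Put $U:=A\setminus\pi[B\setminus W]$, which is nonempty and relatively open in $A$ by irreducibility. Each $a\in U$ has a preimage in $B$, because $\pi[B]=A$, and that preimage cannot lie in $B\setminus W$, so $a\in\pi[W]$. Hence $U\subseteq\pi[W]$, and
\[
\operatorname{diam}f[U]\leqslant\operatorname{diam}f[\pi[W]]=\operatorname{diam}(f\circ\pi)[W]<\varepsilon .
\]
This proves that $f$ is fragmented.

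The main obstacle is producing an open subset of $A$ from an open subset of $L$, since $\pi$ need not be open. Minimality of $B$ is exactly what overcomes this. The only other point needing care is the Zorn argument, where the compactness of the fibres guarantees that the intersection of a chain still maps onto $A$.
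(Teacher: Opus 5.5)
Your proof is correct and follows essentially the same route as the paper: use Zorn's lemma, with compactness of the fibres, to pick a minimal closed $B\subseteq\pi^{-1}[A]$ mapping onto $A$; apply fragmentedness of $f\circ\pi$ on $B$ to obtain $W$; and use minimality to see that $A\setminus\pi[B\setminus W]$ is a nonempty relatively open subset of $\pi[W]$. The differences are cosmetic: the paper argues by contradiction and works with $\mathbb C$-valued functions directly, so your reduction to real and imaginary parts is harmless but unnecessary.
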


\begin{proof}
This is a standard fact (see, for example, \cite[Lemma 2.4 ]{Simon_2015}).
Suppose that $f$ is not fragmented. Then there are $\varepsilon>0$ and a
nonempty closed set $F\subseteq K$ such that
$\operatorname{diam} f[U]\geqslant \varepsilon$
for every nonempty relatively open subset $U\subseteq F$.

By Zorn's lemma choose a minimal closed set
$L_0\subseteq \pi^{-1}(F)$
such that $\pi[L_0]=F$: the existence follows because if
$(L_i)_{i\in I}$ is a decreasing chain of closed subsets of $\pi^{-1}(F)$
mapping onto $F$, then for each $x\in F$ the compact sets
$L_i\cap \pi^{-1}(\{x\})$ have the finite intersection property, so
$\bigcap_{i\in I} L_i$
still maps onto $F$.
Since $f\circ\pi$ is fragmented on $L$, and hence also on the closed subspace
$L_0$, there is a nonempty relatively open set $V\subseteq L_0$ such that $\operatorname{diam} (f\circ\pi)[V]<\varepsilon$.
If $L_0\setminus V$ still mapped onto $F$, this would contradict the minimality
of $L_0$. Hence,
$$U:=F\setminus \pi[L_0\setminus V]$$
is nonempty. Moreover, since $L_0\setminus V$ is compact, $\pi[L_0\setminus V]$
is closed in $F$, so $U$ is relatively open in $F$.

For every $x\in U$, $L_0\cap\pi^{-1}(\{x\})\subseteq V$.
Therefore, $f[U]\subseteq (f\circ\pi)[V].$
Consequently,
$$\operatorname{diam} f[U]
        \leqslant
        \operatorname{diam} (f\circ\pi)[V]
        <\varepsilon,$$
contradicting the choice of $F$. Thus, $f$ is fragmented.
\end{proof}

By \cite[Theorem~A.121]{LMNS10}, bounded fragmented functions on compact
spaces are \(\Sigma_1(H_s)\)-measurable, where \(H_s\) denotes the family of resolvable sets. By \cite[Proposition~A.118]{LMNS10}, resolvable sets are universally measurable. Thus, the following fact is standard. For the convenience of the reader, we include a self-contained proof.

\begin{fact}\label{fact:univ}
Let \(K\) be a compact Hausdorff space and let \(f:K\to\mathbb C\) be bounded
and fragmented. Then, for every regular Borel probability measure \(\lambda\) on \(K\),
there are a Borel function \(f_\lambda:K\to\mathbb C\) and a Borel set
\(N_\lambda\subseteq K\) with \(\lambda(N_\lambda)=0\) such that
    $$f=f_\lambda\quad\text{on }K\setminus N_\lambda .$$
In particular, $f$ is $\lambda$-measurable (i.e., measurable for the $\lambda$-completion of the Borel $\sigma$-algebra on $K$). Consequently, if $K$ is a compact Hausdorff group, then $f$ is measurable for the completion
of the Borel $\sigma$-algebra on $K$ with respect to the normalized Haar measure on $K$.
\end{fact}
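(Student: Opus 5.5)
Since a bounded $f$ is fragmented if and only if $\operatorname{Re}f$ and $\operatorname{Im}f$ are, and a complex function is Borel (resp.\ $\lambda$-measurable) exactly when its real and imaginary parts are, it suffices to treat real-valued bounded fragmented $f:K\to\mathbb R$. Fix $\lambda\in\Prob(K)$. The plan is to show that for every $\varepsilon>0$ there is a Borel function $h_\varepsilon$ with $|f-h_\varepsilon|\leqslant\varepsilon$ outside a $\lambda$-null Borel set. Taking $\varepsilon=1/n$, the pointwise limit $f_\lambda:=\limsup_n h_{1/n}$ is Borel. Moreover, $f=f_\lambda$ off the union $N_\lambda$ of the countably many null sets, so $\lambda(N_\lambda)=0$.

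To construct $h_\varepsilon$ I will use a transfinite derivation driven by fragmentability. Put $F_0:=K$. Given a nonempty closed $F_\alpha$, fragmentedness supplies a nonempty relatively open $U_\alpha\subseteq F_\alpha$ with $\operatorname{diam}f[U_\alpha]<\varepsilon$; set $F_{\alpha+1}:=F_\alpha\setminus U_\alpha$. At limit stages take intersections. This yields a strictly decreasing family of closed sets that terminates at $\varnothing$ at some ordinal $\theta$, and the sets $U_\alpha=F_\alpha\setminus F_{\alpha+1}$ for $\alpha<\theta$ partition $K$ into differences of closed sets.

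The main obstacle is that $\theta$ may be uncountable, so $\bigcup_\alpha U_\alpha$ cannot be handled by countable additivity directly. Here regularity of $\lambda$ is essential. The function $\alpha\mapsto\lambda(F_\alpha)$ is nonincreasing, and by regularity (for a decreasing family of closed sets in a compact space, $\lambda(\bigcap F_\alpha)=\inf\lambda(F_\alpha)$, via $\tau$-additivity of regular measures) it is continuous at limit ordinals. A nonincreasing real-valued ordinal function can strictly drop only countably many times. Hence there is a countable set $C\subseteq\theta$ with $\lambda(U_\alpha)=0$ for $\alpha\notin C$. For $\alpha\notin C$, choose a countable ordinal-indexed stage: let $\beta$ be the supremum of the drop points in $C$ together with successors. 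Then $\lambda(F_\beta)=\lambda(F_{\beta'})$ for all $\beta'\geqslant\beta$ up to the terminal stage, so $\lambda(F_\beta)=\lambda(\varnothing)=0$ by limit continuity. Thus the Borel set $N:=F_\beta$ is null. The sets $U_\alpha$ for $\alpha<\beta$ that carry positive measure are countably many, namely those in $C$. The remaining $U_\alpha$ with $\alpha<\beta$ must also be handled. For this I will apply the drop argument to the whole family: the union of all $U_\alpha$ with $\alpha\notin C$ equals $K\setminus\bigl(\bigcup_{\alpha\in C}U_\alpha\bigr)$, which is Borel. Its measure is $1-\sum_{\alpha\in C}\lambda(U_\alpha)$, and this equals $0$ because $\lambda(F_\alpha)$ decreases from $1$ to $0$ only through the drops at $C$ (using limit continuity, by transfinite induction $\lambda(F_\alpha)=1-\sum_{\gamma\in C,\gamma<\alpha}\lambda(U_\gamma)$). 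Then I define $h_\varepsilon:=\sum_{\alpha\in C}c_\alpha\mathbf 1_{U_\alpha}$ with $c_\alpha\in f[U_\alpha]$. This is Borel as a countable sum over disjoint Borel sets, and $|f-h_\varepsilon|<\varepsilon$ off the null Borel set $K\setminus\bigcup_{\alpha\in C}U_\alpha$.

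Finally, $\lambda$-measurability follows at once, since $f$ agrees with a Borel function off a null set. The group statement is the special case $\lambda=\mathfrak h_K$, because normalized Haar measure on a compact Hausdorff group is a regular Borel probability measure.
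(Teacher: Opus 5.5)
Your proof is correct and follows essentially the same route as the paper: the same transfinite exhaustion of $K$ by relatively open pieces of small oscillation, the identity $\lambda(F_\alpha)=1-\sum_{\gamma<\alpha}\lambda(U_\gamma)$ proved by transfinite induction using $\tau$-additivity at limit stages, a countable step function on the conull union of the positive-measure pieces, and a pointwise limit over $\varepsilon=1/n$. The intermediate detour through the ordinal $\beta$ is superfluous once you have that identity; note also that $\beta$, being a supremum of countably many ordinals, need not itself be a countable ordinal.
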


\begin{proof}
For every
\(\varepsilon>0\) and every nonempty closed subset \(F\subseteq K\), there is
a nonempty relatively open subset \(V\subseteq F\) such that
$\operatorname{diam} f[V]<\varepsilon$.
Fix \(\varepsilon>0\). We construct, by transfinite recursion, a decreasing family of closed sets \((F_\gamma)_\gamma\) and disjoint Borel pieces \((P_\gamma)_\gamma\). 
Put \(F_0:=K\). If \(F_\gamma\neq\varnothing\), choose an
open set \(O_\gamma\subseteq K\) such that
$$
        P_\gamma:=F_\gamma\cap O_\gamma\neq\varnothing
        \quad\text{and}\quad
        \operatorname{diam} f[P_\gamma]<\varepsilon ,
$$
and set
\[
        F_{\gamma+1}:=F_\gamma\setminus P_\gamma
        =F_\gamma\cap (K\setminus O_\gamma).
\]
At limit ordinals \(\delta\), put $F_\delta:=\bigcap_{\gamma<\delta}F_\gamma$.
Since the closed subsets of \(K\) form a set, this strictly decreasing process
stops: there is an ordinal \(\theta\) such that \(F_\theta=\varnothing\). Hence,
\[
        K=\bigsqcup_{\gamma<\theta}P_\gamma .
\]
Indeed, if \(x\in K\), then the least ordinal \(\gamma\) such that
\(x\notin F_\gamma\) cannot be a limit ordinal, and therefore
\(\gamma=\delta+1\) with \(x\in P_\delta\). Each \(P_\gamma\) is Borel, being
the intersection of a closed set and an open set.

Now, fix a regular Borel probability measure \(\lambda\) on \(K\). We first recall
the following continuity property for regular Borel probability measures on compact
spaces: if \((C_i)_{i\in I}\) is a downward directed family of closed subsets of
\(K\), then
$$\lambda\Big(\bigcap_{i\in I} C_i\Big)=\inf_{i\in I}\lambda(C_i).$$
For \(\gamma\leqslant\theta\), put
$$s_\gamma:=\sum_{\delta<\gamma}\lambda(P_\delta),$$
where the sum means the supremum of all finite subsums. Since the sets
\(P_\delta\) are pairwise disjoint and \(\lambda(K)=1\), only countably many
summands are positive, so this agrees with the usual countable sum over those
positive summands. We claim that
$\lambda(F_\gamma)=1-s_\gamma$, $\text{for all }\gamma\leqslant\theta$.
This is proved by transfinite induction. The case \(\gamma=0\) is immediate.
For the successor step, use
$F_\gamma=F_{\gamma+1}\sqcup P_\gamma$,
thus
$$\lambda(F_{\gamma+1})=\lambda(F_\gamma)-\lambda(P_\gamma)=1-s_\gamma-\lambda(P_\gamma)=1-s_{\gamma+1}.$$
At a limit ordinal \(\delta\), the closed sets \((F_\gamma)_{\gamma<\delta}\) are
downward directed and
$F_\delta=\bigcap_{\gamma<\delta}F_\gamma$.
By the continuity property given above,
$$\lambda(F_\delta)=\lambda\Big(\bigcap_{\gamma<\delta}F_\gamma\Big)
=\inf_{\gamma<\delta}\lambda(F_\gamma)=\inf_{\gamma<\delta}(1-s_\gamma)
=1-\sup_{\gamma<\delta}s_\gamma=1-s_\delta.$$
Since \(F_\theta=\varnothing\), we obtain
$\sum_{\gamma<\theta}\lambda(P_\gamma)=1$.

Let
$$\Gamma_\varepsilon=\{\gamma<\theta:\lambda(P_\gamma)>0\}.$$
This set is countable: for each \(n\geqslant 1\), there are only finitely many
\(\gamma\) with \(\lambda(P_\gamma)\geqslant 1/n\). Put
$$B_\varepsilon:=\bigcup_{\gamma\in\Gamma_\varepsilon}P_\gamma.$$
Then \(B_\varepsilon\) is Borel and \(\lambda(B_\varepsilon)=1\).
For each \(\gamma\in\Gamma_\varepsilon\), choose a point
\(x_\gamma\in P_\gamma\), and set
$$c_\gamma:=f(x_\gamma).$$
Define a Borel step function \(g_\varepsilon:K\to\mathbb C\) by
$$g_\varepsilon=\sum_{\gamma\in\Gamma_\varepsilon}
c_\gamma\,\mathbf 1_{P_\gamma},$$
with value \(0\) off \(B_\varepsilon\). Since the sets \(P_\gamma\) are pairwise
disjoint and \(\Gamma_\varepsilon\) is countable, \(g_\varepsilon\) is Borel.
For \(x\in B_\varepsilon\), say \(x\in P_\gamma\), we have
$$|f(x)-g_\varepsilon(x)|=|f(x)-c_\gamma|<\varepsilon,$$
because \(\operatorname{diam} f[P_\gamma]<\varepsilon\).

Apply the construction with \(\varepsilon=1/n\). Put $N:=\bigcup_{n=1}^{\infty}(K\setminus B_{1/n})$.
Then \(N\) is Borel and \(\lambda(N)=0\). Let
$$h_n:=g_{1/n}\,\cdot\,\mathbf 1_{K\setminus N}.$$
The functions \(h_n\) are Borel, and for every \(x\in K\setminus N\),
$$|h_n(x)-f(x)|<\frac1n .$$
Thus, \(h_n\to f\) pointwise on \(K\setminus N\), while \(h_n=0\) on \(N\) for
all \(n\). Consequently,
$$f_\lambda:=\lim_{n\to\infty}h_n$$
is Borel and agrees with \(f\) on \(K\setminus N\). Therefore, \(f\) is measurable
for the \(\lambda\)-completion of the Borel \(\sigma\)-algebra.
\end{proof}

Let $(X,G)$ be a tame flow, $u^2=u\in\CM\unlhd_m E:=E(X,G)$, $K:=u\CM$,
let $j:K\to E$ be the inclusion map.
The next lemma is essentially a general abstract form of the so-called {\em Revised Newelski's Conjecture} stated in \cite{KruPi23} for definable groups, proved in \cite{CGK} for countable definable groups, and then in \cite{BaZu} in a general abstract context (and without the countability assumption) for minimal flows. Here, we remove the assumption on minimality of the flow.

\begin{lemma}\label{fact: RNC}
    $K$ is a compact Hausdorff group in the $\tau$-topology.
\end{lemma}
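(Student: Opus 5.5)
The plan is to reduce to the minimal case, where the result is known: \cite{BaZu} proves that for a \emph{minimal tame} flow every Ellis group of its enveloping semigroup, with the $\tau$-topology, is a compact Hausdorff topological group. The minimal flow to use is $(\CM,G)$ itself. We then transport the conclusion back to $K$ along a semigroup epimorphism, using Fact~\ref{fact:ellis-group-functoriality}.

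First, tameness of $(\CM,G)$. By Lemma~\ref{lem:finite-powers-of-tame-flows}(3), $(E,G)$ is tame. Every element of $E(\CM,G)$ is the restriction to $\CM$ of an element of $E(E,G)$, and restrictions of fragmented maps to closed subspaces are fragmented, so the subflow $\CM\subseteq E$ is tame. It is also minimal, since minimal left ideals are exactly the minimal subflows of $E$.

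Second, the homomorphism. For $p\in E$ put $\lambda_p(q):=pq$. The map $p\mapsto\lambda_p$, $E\to E^E$, is continuous, because $p\mapsto pq$ is continuous for fixed $q$. It is also a $G$-equivariant semigroup homomorphism, and it sends $g_X$ to $g_E$. Its image is therefore compact and contains $\{g_E:g\in G\}$. Conversely, $\lambda$ is injective, since $\lambda_p(\mathrm{id}_X)=p$ and $\mathrm{id}_X\in E$. Hence $\lambda:E\to E(E,G)$ is an isomorphism, and $E(E,G)=\{\lambda_p:p\in E\}$. Composing with the restriction map $E(E,G)\to E(\CM,G)$ gives a continuous surjective $G$-equivariant semigroup homomorphism
$$\Phi:E\to E(\CM,G),\qquad \Phi(p)=\lambda_p|_{\CM}.$$

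Third, apply Fact~\ref{fact:ellis-group-functoriality} to $\Phi$ with $\CM$ and $u$. Put $\mathcal N:=\Phi[\CM]$, $v:=\Phi(u)$ and $L:=v\mathcal N=\Phi[K]$. The fact says that $\mathcal N\unlhdm E(\CM,G)$, that $v$ is an idempotent, and that $\Phi|_K:(K,\tau)\to(L,\tau)$ is a continuous group epimorphism and a $\tau$-quotient map. This map is injective: if $\Phi(k)=\Phi(k')$ for $k,k'\in K$, then evaluating at $u\in\CM$ gives $k=ku=k'u=k'$. A bijective quotient map is a homeomorphism, so $\Phi|_K$ is an isomorphism of semitopological groups $(K,\tau)\cong(L,\tau)$. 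By \cite{BaZu}, applied to the minimal tame flow $(\CM,G)$, the group $(L,\tau)$ is a compact Hausdorff topological group, and hence so is $(K,\tau)$.

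The only delicate point is matching hypotheses and conventions. We must check that the setting of \cite{BaZu} (Ellis groups of $E(Y,G)$ for a minimal tame flow $Y$, with the $\tau$-topology) is literally the one used here; note that their conclusion may be stated only as compact Hausdorff. If it gives only Hausdorffness, then $(L,\tau)$ is a compact Hausdorff semitopological group, and Ellis' joint continuity theorem upgrades it to a topological group. With the identification $E(E,G)\cong E$ in place, everything else is formal.
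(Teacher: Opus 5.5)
Your proof is correct and follows the same route as the paper. Both arguments show that $(\CM,G)$ is tame, identify $(K,\tau)$ with an Ellis group of $E(\CM,G)$, get Hausdorffness from \cite{BaZu}, and finish with Ellis' joint continuity theorem. The only difference is that you derive the identification yourself: you apply Fact~\ref{fact:ellis-group-functoriality} to $p\mapsto\lambda_p|_{\CM}$ and get injectivity by evaluating at $u$, whereas the paper cites \cite[Lemma~5.16]{CGK} for it.
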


\begin{proof}
    By Lemma~\ref{lem:finite-powers-of-tame-flows}(3), the flow
$(E(X,G),G)$ is tame, and hence its minimal subflow $(\CM,G)$ is tame.
By \cite[Lemma~5.16]{CGK}, $(K,\tau)$ is isomorphic as a
semitopological group to an Ellis group $K'$ of $E(\CM,G)$. Since
$(\CM,G)$ is minimal and tame, \cite[Theorem~11.7]{BaZu} implies that
$K'$ is Hausdorff, and therefore so is $K$. 

By the basic properties of the $\tau$-topology recalled in
Section~\ref{sec:preliminaries}, $(K,\tau)$ is a quasi-compact
$T_1$ semitopological group.
Since by the first paragraph $K$ is Hausdorff, the Ellis joint continuity theorem implies that it is a compact Hausdorff topological group.
\end{proof}

\begin{lemma}\label{lem:Fj.fragmented}
If $F\in C(E,\mathbb{C})$,
then $F\circ j:K\to \mathbb{C}$ is a fragmented map with respect to the $\tau$-topology on $K$.
\end{lemma}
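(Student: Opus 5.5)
The plan is to reduce the statement to Lemma~\ref{lem:descent-complex}. We look for a compact Hausdorff space $L$ and a continuous surjection $r:L\to(K,\tau)$ such that $F\circ j\circ r$ is fragmented on $L$. The natural candidate is $L:=\overline{K}\subseteq\CM$, the closure of $K$ in $E$, which is compact, together with
$$
r:\overline K\longrightarrow K,\qquad r(p):=up .
$$
This map is well defined, since $up\in u\CM=K$ for $p\in\CM$. It is surjective, since $r(k)=uk=k$ for $k\in K$. With this choice, $F\circ j\circ r$ is the map $p\mapsto F(up)$.

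\textbf{Step 1: fragmentedness upstairs.} By Lemma~\ref{lem:finite-powers-of-tame-flows}(3), the flow $(E,G)$ is tame. Left multiplication $\lambda_u:E\to E$, $q\mapsto uq$, is the pointwise limit of the maps $\lambda_{g_i}$ for any net $g_i\to u$. Hence $\lambda_u$ is an element of $E(E,G)$, and it is therefore a fragmented self-map of $E$. Composing a fragmented map with the uniformly continuous $F$ gives a fragmented map. Restricting to the closed subspace $\overline K$ preserves fragmentedness. So $p\mapsto F(up)$ is fragmented on $\overline K$.

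\textbf{Step 2: continuity of $r$ into $(K,\tau)$.} This is the step I expect to be the main obstacle. The basic tool is the inclusion $p\cdot\overline A\subseteq p\circ A$ for $A\subseteq E$. To see it, write $q\in\overline A$ as $q=\lim_i a_i$ and take $g_j\to p$. Then $pq=\lim_j g_jq=\lim_j\lim_i g_ja_i$, and a diagonal net realises $pq$ in $p\circ A$. Applied to $A\subseteq K$, this gives $r[\overline A]\subseteq u(u\circ A)=\cl_\tau(A)$.

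For continuity, take a $\tau$-closed $C\subseteq K$, let $D:=r^{-1}(C)$, and let $p\in\overline D$. We must show $up\in C$. By the inclusion above, $up\in u(u\circ D)$. The key sub-claim is
$$
u\circ D\subseteq u\circ\bigl(u\circ (uD)\bigr)\subseteq (uu)\circ(uD)=u\circ(uD),
$$
using $D\subseteq\CM$ and the standard composition rule $p\circ(q\circ A)\subseteq(pq)\circ A$. Granting it, $up\in u(u\circ C)=\cl_\tau(C)=C$. The delicate point is the first inclusion, namely that each $d\in D$ can be recovered from $ud$ up to the circle operation. I would first try to obtain it from $d\in\CM=\CM u$, which gives $d=du$. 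If that fails, I would instead invoke the known fact (cf.~\cite[Appendix A]{rzepecki2018}) that $p\mapsto up$ is continuous from $\CM$, or at least from $\overline K$, onto $(K,\tau)$ whenever $(K,\tau)$ is Hausdorff. That hypothesis holds here by Lemma~\ref{fact: RNC}. A third option is a closed-graph argument: a map into a compact Hausdorff space with closed graph is continuous.

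\textbf{Step 3: descent.} Since $(K,\tau)$ is compact Hausdorff by Lemma~\ref{fact: RNC}, Lemma~\ref{lem:descent-complex} applies to $\pi:=r$ and $f:=F\circ j$, a bounded function. Its hypothesis, that $(F\circ j)\circ r$ is fragmented, is exactly Step~1. Hence $F\circ j$ is fragmented with respect to $\tau$.
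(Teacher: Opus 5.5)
This is essentially the paper's proof. It uses the same map $r=\lambda_u|_{\overline K}$ and gets fragmentedness of $F\circ\lambda_u|_{\overline K}$ from tameness of $(E,G)$ as in your Step~1. It then settles Step~2 by citing \cite[Lemma~3.1]{KruPiRze} once $(K,\tau)$ is known to be Hausdorff (Lemma~\ref{fact: RNC}), which is exactly your fallback. Rely on that citation rather than your direct sub-claim: the inclusion $D\subseteq u\circ(uD)$ fails whenever $\overline K\neq K$ (as in Appendix~\ref{sec:A1}). Indeed, for $p\in\overline K\setminus K$ and $C:=\{up\}$ one has $p\in D$, but $u\circ(uD)=u\circ\{up\}=\{up\}$.
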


\begin{proof}
    Consider the map $\lambda_u:E\to E$ given by $\lambda_u(\eta):=u\eta$. By Lemma~\ref{lem:finite-powers-of-tame-flows}(3), the flow $(E(X,G),G)$ is tame. Moreover, $\lambda_u\in E(E(X,G),G)$. Therefore, $\lambda_u:E\to E$ is fragmented.
    In particular, its restriction $\lambda_u|_{\overline{K}}:\overline{K}\to\overline{K}$ is a fragmented map, and thus $F\circ\lambda_u|_{\overline{K}}$ is fragmented as well.
    
    Because $(X,G)$ is tame, we have that $(K,\tau)$ is a compact Hausdorff topological group by Lemma~\ref{fact: RNC}.
    By Lemma 3.1 from \cite{KruPiRze}, the map $\lambda_u |_{\bar{K}}:\overline{K}\to K$ is a continuous surjection (from the induced topology to the $\tau$-topology).

    Now, the composition $(F\circ j)\circ\lambda_u|_{\overline{K}}$
    is equal to $F\circ\lambda_u|_{\overline{K}}$, hence it is fragmented. We apply Lemma~\ref{lem:descent-complex}, to get the conclusion. 
\end{proof}

Let $\mathfrak{h}_K$ be the normalized Haar measure on the compact Hausdorff group $K$.

\begin{cor}\label{cor:Fj.measurable}
If $F\in C(E,\mathbb{C})$, then $F\circ j:K\to \mathbb{C}$ is $\mathfrak{h}_K$-measurable.
\end{cor}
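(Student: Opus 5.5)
This is a direct combination of Lemma~\ref{lem:Fj.fragmented} with Fact~\ref{fact:univ}, applied to the compact Hausdorff space $(K,\tau)$. No new construction is needed; the work is in checking that the hypotheses of Fact~\ref{fact:univ} hold for the right topology on $K$.

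First, I will fix $F\in C(E,\mathbb C)$ and record two properties of $F\circ j$.
\begin{itemize}
\item It is bounded, since $E$ is compact and $F$ is continuous, so $\sup_K|F\circ j|\leqslant\|F\|_\infty<\infty$.
\item It is fragmented as a map from $(K,\tau)$ to $\mathbb C$, by Lemma~\ref{lem:Fj.fragmented}.
\end{itemize}

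Next, by Lemma~\ref{fact: RNC}, $(K,\tau)$ is a compact Hausdorff topological group. It therefore carries the normalized Haar measure $\mathfrak h_K$, which is a regular Borel probability measure on $(K,\tau)$.

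Then I will invoke Fact~\ref{fact:univ} with the compact space $(K,\tau)$, the function $f:=F\circ j$ and $\lambda:=\mathfrak h_K$. This produces a $\tau$-Borel function $f_{\mathfrak h_K}$ and an $\mathfrak h_K$-null Borel set $N$ such that $F\circ j=f_{\mathfrak h_K}$ off $N$. Hence $F\circ j$ is measurable for the $\mathfrak h_K$-completion, which is exactly the claim. This is also the final sentence of Fact~\ref{fact:univ}, specialized to $K$.

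The only point needing care is that everything must be done with respect to the $\tau$-topology on $K$, not the subspace topology from $E$. The inclusion $j$ is not $\tau$-continuous, and in general not even Borel, so $F\circ j$ need not be continuous or Borel for $\tau$. This is precisely why fragmentedness, obtained from Lemma~\ref{lem:Fj.fragmented} via descent along the continuous surjection $\lambda_u|_{\overline K}:\overline K\to(K,\tau)$, is needed in place of continuity. Once that lemma is granted, Fact~\ref{fact:univ} completes the argument, so I expect no real obstacle.
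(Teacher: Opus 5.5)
Your proposal is correct and is exactly the paper's argument: Lemma~\ref{lem:Fj.fragmented} gives that $F\circ j$ is fragmented on $(K,\tau)$, and Fact~\ref{fact:univ} is then applied to the compact Hausdorff group $(K,\tau)$ with $\lambda=\mathfrak h_K$. One wording fix: $j$ \emph{need not} be $\tau$-continuous, rather than never being so; for a minimal equicontinuous flow, for instance, $\tau$ agrees with the subspace topology and $j$ is continuous.
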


\begin{proof}
    By Lemma~\ref{lem:Fj.fragmented}, we have that $F\circ j:K\to\mathbb{C}$ is fragmented.
    The conclusion follows by Fact~\ref{fact:univ}.
\end{proof}

\begin{fact}[Riesz--Markov representation]\label{fact:riesz-complex}
Let \(Y\) be a compact Hausdorff space. If
$$\Lambda:C(Y,\mathbb C)\to\mathbb C$$
is a positive unital complex-linear functional, then there is a unique regular
Borel probability measure \(\mu_\Lambda\) on \(Y\) such that
$$\Lambda(F)=\int_Y F\,d\mu_\Lambda\quad\text{for every }F\in C(Y,\mathbb C).$$
\end{fact}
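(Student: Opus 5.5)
We prove the Riesz--Markov representation for complex-valued functionals by reducing to the standard real Riesz--Markov--Kakutani theorem on compact Hausdorff spaces. The complex case follows from the real one by splitting functions into real and imaginary parts.

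First, we check that $\Lambda$ is \emph{hermitian}, i.e.\ $\Lambda(F^*)=\overline{\Lambda(F)}$. Here "positive" means $\Lambda(h^*h)\geqslant 0$, equivalently $\Lambda(f)\geqslant0$ for every $f\in C(Y)_+$. For a real-valued $f\in C(Y,\mathbb R)$, write
$$f=(f+\|f\|)-\|f\|\cdot 1,$$
a difference of two elements of $C(Y)_+$. Both terms have real $\Lambda$-values, so $\Lambda(f)\in\mathbb R$. Splitting a general $F$ as $F=\operatorname{Re}F+i\operatorname{Im}F$ and using complex linearity gives the hermitian property. Hence the restriction $\Lambda_{\mathbb R}:=\Lambda|_{C(Y,\mathbb R)}$ is a real-linear functional on $C(Y,\mathbb R)$ that is positive, in the sense that $f\geqslant0$ implies $\Lambda_{\mathbb R}(f)\geqslant0$, and satisfies $\Lambda_{\mathbb R}(1)=1$.

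Next, we apply the classical Riesz--Markov--Kakutani theorem for compact Hausdorff spaces to $\Lambda_{\mathbb R}$. This gives a unique regular Borel measure $\mu_\Lambda$ with
$$\Lambda(f)=\int_Y f\,d\mu_\Lambda\quad\text{for all } f\in C(Y,\mathbb R).$$
Since $\mu_\Lambda(Y)=\Lambda(1)=1$, the measure $\mu_\Lambda$ is a probability measure. For complex $F$, linearity of both sides and the decomposition $F=\operatorname{Re}F+i\operatorname{Im}F$ yield
$$\Lambda(F)=\int_Y F\,d\mu_\Lambda\quad\text{for every }F\in C(Y,\mathbb C).$$

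For uniqueness, let $\nu$ be another regular Borel probability measure representing $\Lambda$. Then $\nu$ also represents $\Lambda_{\mathbb R}$ on $C(Y,\mathbb R)$, so the uniqueness clause of the real theorem gives $\nu=\mu_\Lambda$. If one wants to make that clause explicit, it follows from two facts: regularity (inner regularity by compacts on open sets), and Urysohn's lemma, which says that the two measures agree on every open set by approximating its indicator from below with continuous functions supported in compacts.

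There is no real obstacle in this argument. The only point needing care is the hermitian step: it ensures that positivity, stated on squares $h^*h$, transfers to the real part of the functional, so that the real theorem applies.
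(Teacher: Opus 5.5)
Your proposal is correct. The paper gives no proof of this Fact and simply invokes the classical Riesz--Markov theorem, and your reduction (positivity forces $\Lambda$ to be real on $C(Y,\mathbb R)$, apply the real theorem, extend by complex linearity) is the standard way to get the complex version from the real one. The only loose end is in your optional explicit uniqueness sketch: passing from agreement on open sets to agreement on all Borel sets needs one more line, either outer regularity or the $\pi$--$\lambda$ theorem (open sets form a $\pi$-system and both measures have total mass $1$).
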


\begin{lemma}\label{defprop:haar-decompression}
The formula
$$
        \Lambda_K(F)
        :=
        \int_K F(j(k))\,d\mathfrak h_K(k)
        \qquad(F\in C(E,\mathbb C))
$$
defines a positive unital complex-linear functional on \(C(E,\mathbb C)\).
Consequently, by Fact \ref{fact:riesz-complex}, there is a unique regular Borel
probability measure \(\mu_K\) on \(E\) such that
$$
        \int_E F\,d\mu_K
        =
        \int_K F(j(k))\,d\mathfrak h_K(k)
        \quad\text{for every }F\in C(E,\mathbb C).
$$
We call \(\mu_K\) the decompression of \(\mathfrak h_K\).
\end{lemma}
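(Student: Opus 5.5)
The plan is to verify, one after another, that $\Lambda_K$ is well defined, complex-linear, unital and positive, and then to quote Fact~\ref{fact:riesz-complex}.

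\emph{Well-definedness.} Fix $F\in C(E,\mathbb C)$. By Corollary~\ref{cor:Fj.measurable}, $F\circ j$ is $\mathfrak h_K$-measurable. Since $E$ is compact, $F$ is bounded, so
\[
|F\circ j|\leqslant\|F\|_\infty .
\]
Hence $F\circ j$ is integrable with respect to the completion of the probability measure $\mathfrak h_K$, and $|\Lambda_K(F)|\leqslant\|F\|_\infty$. Fact~\ref{fact:univ} also provides a Borel function agreeing with $F\circ j$ off an $\mathfrak h_K$-null Borel set. So one may equivalently integrate a Borel representative, and the value does not depend on the choice of representative.

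\emph{Linearity and unitality.} Complex-linearity follows from $(aF+bH)\circ j=a(F\circ j)+b(H\circ j)$ together with linearity of the integral for the completed measure. Unitality holds because $1_E\circ j\equiv 1$ and $\mathfrak h_K(K)=1$.

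\emph{Positivity.} If $F=H^*H$, then
\[
F\circ j=|H\circ j|^2\geqslant 0
\]
pointwise, so $\Lambda_K(F)\geqslant 0$.

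\emph{Conclusion.} Fact~\ref{fact:riesz-complex} now yields a unique regular Borel probability measure $\mu_K$ on $E$ with $\int_E F\,d\mu_K=\Lambda_K(F)$ for all $F\in C(E,\mathbb C)$.

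The only nontrivial input is the measurability of $F\circ j$ with respect to the $\tau$-Borel structure on $K$. This is exactly Corollary~\ref{cor:Fj.measurable}, which rests on Lemma~\ref{lem:Fj.fragmented} and Fact~\ref{fact:univ}. Once that is in place, no step is an obstacle: the rest is routine properties of the integral. The one point needing a brief remark is that integrals against the completion of $\mathfrak h_K$ are still linear and monotone.
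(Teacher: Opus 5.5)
Your proposal is correct and follows the paper's proof essentially step by step. You obtain well-definedness from Corollary~\ref{cor:Fj.measurable}, linearity and unitality from properties of the integral, positivity from pointwise nonnegativity of $F\circ j$, and then conclude via Fact~\ref{fact:riesz-complex}; your extra remarks on boundedness of $F\circ j$ and independence of the Borel representative only make explicit what the paper leaves implicit.
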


\begin{proof}
The integral is well defined by Corollary \ref{cor:Fj.measurable}. Complex linearity follows immediately
from linearity of the integral. Moreover,
$$\Lambda_K(1_E)=\int_K 1\,d\mathfrak h_K=1.$$
If \(F\in C(E)_+\), 
then $F(j(k)) \geqslant 0$ for every $k \in K$.
Therefore, \(F\circ j\) is real-valued and non-negative, and
$$\Lambda_K(F)=\int_K F(j(k)) \,d\mathfrak h_K(k)\geqslant 0 .$$
Thus, \(\Lambda_K\) is positive. 
The Riesz--Markov representation theorem now yields the unique regular Borel probability measure \(\mu_K\) with the displayed property.
\end{proof}

\begin{remark}\label{rem:no-raw-pushforward}
The measure $\mu_K$ is not defined as a raw pushforward. Although
$F\circ j$ is $\mathfrak h_K$-measurable for every $F\in C(E)$, the
inclusion $j:(K,\tau)\to E$ need not be Borel; see
Appendix~\ref{sec:A1}. It is not known whether $j$ is always measurable
from the $\mathfrak h_K$-completion of the Borel $\sigma$-algebra on
$K$ to the Borel $\sigma$-algebra on $E$. Even when this measurability
holds, it is not known whether the Borel pushforward
$j_*\mathfrak h_K$ must be regular. The Riesz--Markov construction
instead produces a canonical regular Borel probability measure on $E$.
If $j$ is measurable in the above sense and $j_*\mathfrak{h}_K$ is
regular, then
$$
\mu_K=j_*\mathfrak{h}_K,
$$
since the two measures have the same integrals against every function
in $C(E)$.
\end{remark}

\begin{question}\label{question:39}
\begin{enumerate}
    \item Does there exist a tame flow $(X,G)$ for which the inclusion
$j:(K,\tau)\longrightarrow E(X,G)$
is not measurable with respect to the $\mathfrak h_K$-completion?

\item Suppose that $j$ is measurable for the $\mathfrak h_K$-completion.
Must the Borel pushforward $j_*\mathfrak h_K$ be regular?
\end{enumerate}
\end{question}

\begin{remark}\label{remark: support of the compression}
\begin{enumerate}
\item $\supp(\mu_K)\subseteq\overline{u\CM}\subseteq\CM$.
\item If $\mu_K$ is $G$-invariant, then $\overline{u\CM}=\CM$.
\item For every $x\in X$,
$\supp((\ev_x)_*\mu_K)\subseteq\overline{u\CM}\,x\subseteq\CM x$.
\item If $(\ev_x)_*\mu_K$ is $G$-invariant, then
$\supp((\ev_x)_*\mu_K)=\CM x$ is a minimal flow.
\end{enumerate}
\end{remark}

\begin{proof}
(1) If not, then there is $F \in C(E)_+$ and $\epsilon >0$ such that  $\mu_K(\{\eta \in E: F(\eta)>\epsilon\}) >0$ and $F |_{\cl(u\mathcal{M})} =0$.  Then $0<\int_E F d\mu_K = \int_K F(j(k)) d \mathfrak{h}_K(k) = 0$, a contradiction.

(2) If $\mu_K$ is $G$-invariant, then so is its support which is also closed, so the conclusion follows from (1) by minimality of $\mathcal{M}$.
Also (3) follows from (1).

(4) By $G$-invariance,  $\supp ((\ev_x)_* \mu_K)$ is a subflow of $(X,G)$ which, by (3), is contained in the minimal subflow $\mathcal{M}x$, and so it must be equal to $\mathcal{M}x$.
\end{proof}

For every $x\in X$, $\supp\bigl((\ev_x)_*\mu_K\bigr)\subseteq\CM x$,
and $\CM x$ is a minimal subflow of $X$. Consequently, whenever
$(\ev_x)_*\mu_K$ is $G$-invariant, its support is minimal.
This already
shows that evaluation of Haar decompressions cannot produce invariant
measures with nonminimal support.
Proposition~\ref{prop:tame-ergodic-support-minimal} below shows that
an ergodic invariant measure with nonminimal support cannot occur on a tame flow.
For comparison, an ergodic Bernoulli measure on a full shift has
nonminimal support; however, the full shift is not tame.

An interesting feature of decompression is that $\mu_K$ may become
$G$-invariant on $E(X,G)$ even when invariance is not visible on the
original Ellis group. The split-circle dihedral example in
Appendix~\ref{subsec:split-circle-ordinary} exhibits this phenomenon.

\section{Unique ergodicity and minimal supports in tame flows}
\label{sec:unique.minimal}
Tameness imposes strong rigidity on invariant measures. We prove
(without metrizability assumptions) that every minimal tame amenable flow
is uniquely ergodic and that every ergodic measure
on a tame flow has minimal support. 
Consequently, the ergodic measures are canonically parametrized by the amenable minimal subflows.

\subsection{Unique ergodicity}
We begin with the metrizable case, where the almost one-to-one
description of minimal tame flows identifies the evaluation of every
Haar decompression with the unique invariant probability measure.
Then, we remove metrizability and obtain unique
ergodicity for arbitrary minimal tame amenable flows.

We first recall the relevant terminology.
Let $\pi:X\to Z$ be a factor map of compact metrizable flows, and put
$$Z_0:=\{z\in Z:|\pi^{-1}(z)|=1\}.$$
The extension $\pi$ is \emph{almost one-to-one} if $Z_0$ is dense in
$Z$. If $(Z,G)$ is uniquely ergodic, with invariant probability measure
$\lambda_Z$, then $\pi$ is called \emph{regular} if
$\lambda_Z(Z_0)=1$.

\begin{lemma}\label{thm:ordinary-ellis-glasner-regime}
Let $(X,G)$ be a metrizable minimal tame amenable flow, let $x_0\in X$, and let $\CM\unlhdm E:=E(X,G)$, $u^2=u\in\CM$ and $K:=u\CM$.
Then $\nu_K:=(\ev_{x_0})_*\mu_K$ is a unique $G$-invariant regular Borel probability measure on $X$.
\end{lemma}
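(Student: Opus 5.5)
The plan is to use the structure theory of metrizable minimal tame flows and reduce everything to the maximal equicontinuous factor. By \cite{GLASNER_2007,Glasner2}, a metrizable minimal tame flow $(X,G)$ is an almost one-to-one extension $\pi:X\to Z$ of its maximal equicontinuous factor $(Z,G)$. Since $(Z,G)$ is minimal and equicontinuous, it is uniquely ergodic; write $\lambda_Z$ for its invariant measure. Since $(X,G)$ is moreover amenable, the regularity theorem of \cite{FGJO21} shows that $\pi$ is regular, i.e.\ $\lambda_Z(Z_0)=1$. Everything then rests on the following elementary observation.

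\emph{Observation.} Any $\nu\in\Prob(X)$ with $\pi_*\nu=\lambda_Z$ is uniquely determined. Indeed, in the metrizable case $Z_0$ is a $G_\delta$ set, so $X_0:=\pi^{-1}(Z_0)$ is Borel and $\nu(X_0)=\lambda_Z(Z_0)=1$. Moreover $\pi|_{X_0}$ is an injective Borel map between Polish spaces, so by Lusin--Souslin it maps Borel sets to Borel sets. Hence, for every Borel $A\subseteq X$,
$$\nu(A)=\nu(A\cap X_0)=\lambda_Z\bigl(\pi[A\cap X_0]\bigr).$$

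With the observation in hand, the proof runs in three steps.

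\begin{enumerate}[(1)]
\item \emph{Unique ergodicity.} Amenability gives some $\mu\in\Prob_G(X)$. Any such $\mu$ satisfies $\pi_*\mu\in\Prob_G(Z)=\{\lambda_Z\}$, so by the observation $\Prob_G(X)$ is a singleton $\{\mu\}$.

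\item \emph{Computing $(\pi_E)_*\mu_K$.} Let $\pi_E:E(X,G)\to E(Z,G)$ be the induced homomorphism. Since $Z$ is equicontinuous, $E(Z,G)$ is a compact Hausdorff topological group acting by homeomorphisms. It is therefore its own unique minimal left ideal, with Ellis group $L=E(Z,G)$, and on $L$ the $\tau$-topology coincides with the compact topology. For $F\in C(E(Z,G))$ the function $F\circ\pi_E\in C(E)$ satisfies
$$\int_E F\circ\pi_E\,d\mu_K=\int_K F(\pi_E(k))\,d\mathfrak h_K(k).$$
By Fact~\ref{fact:ellis-group-functoriality}, $\pi_E|_K:(K,\tau)\to(L,\tau)$ is a continuous epimorphism with $(\pi_E|_K)_*\mathfrak h_K=\mathfrak h_L$. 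Since $F$ is continuous on $(L,\tau)$, the right-hand side equals $\int_L F\,d\mathfrak h_L$. Uniqueness in Riesz--Markov then gives $(\pi_E)_*\mu_K=\mathfrak h_L$.

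\item \emph{Identifying $\nu_K$.} From $\pi\circ\ev_{x_0}=\ev_{\pi(x_0)}\circ\pi_E$ we get
$$\pi_*\nu_K=(\ev_{\pi(x_0)})_*\mathfrak h_L.$$
This is the pushforward of Haar measure under a continuous equivariant surjection $L\to Z$, so it is a $G$-invariant probability measure on $Z$ and hence equals $\lambda_Z$. By the observation, $\nu_K$ coincides with $\mu$; in particular it is $G$-invariant and is the unique invariant measure. (Regularity of $\nu_K$ is automatic: it is a Borel probability measure on a compact metrizable space.)
\end{enumerate}

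The hard input is external: the almost one-to-one structure theorem together with the fact that amenability forces regularity, $\lambda_Z(Z_0)=1$, from \cite{FGJO21}. I would check carefully that its hypotheses (minimal, metrizable, tame, admitting an invariant measure) match exactly those of the lemma. Inside the argument, the delicate step is (2). There one must not push $\mathfrak h_K$ forward along the possibly non-Borel inclusion $j$; instead, one works only with continuous functions of the form $F\circ\pi_E$, for which $\tau$-continuity on $K$ makes the Haar integral behave correctly.
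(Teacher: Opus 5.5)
Your proposal is correct and follows essentially the same route as the paper: the almost one-to-one structure theorem together with the regularity theorem of \cite{FGJO21}, the computation $\pi_*\nu_K=\lambda_Z$ via $(\pi_E|_K)_*\mathfrak h_K=\mathfrak h_L$ and $\lambda_Z=(\ev_{\pi(x_0)})_*\mathfrak h_L$, and uniqueness of lifts of $\lambda_Z$ via Lusin--Souslin. The differences are only cosmetic: you derive unique ergodicity from your lift-uniqueness observation instead of citing \cite[Corollary~5.4(2)]{Glasner2}, and you state the intermediate identity $(\pi_E)_*\mu_K=\mathfrak h_L$ on $E(Z,G)$ explicitly, whereas the paper computes $\pi_*\nu_K$ directly.
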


\begin{proof}
Let 
$$\pi:X\longrightarrow Z,\qquad z_0:=\pi(x_0),$$
be the maximal equicontinuous factor of $(X,G)$. By
\cite[Corollary~5.4(2)]{Glasner2}, the map $\pi$ is almost one-to-one and $X$ is
uniquely ergodic. Moreover, by \cite[Theorem~1.2]{FGJO21}, the extension is regular. 
The minimal equicontinuous flow $(Z,G)$ is uniquely ergodic.
Thus, if $\lambda_Z$ denotes the unique invariant probability measure
on $Z$, then
\begin{equation}
\label{eq:singleton-fibres-full-measure}
        \lambda_Z(Z_0)=1,
        \qquad
        Z_0:=\{z\in Z:|\pi^{-1}(z)|=1\}.
\end{equation}

Put $L:=E(Z,G)$.
Since $(Z,G)$ is minimal and equicontinuous, $L$ is a compact Hausdorff
topological group, and
\begin{equation}
\label{eq:haar-on-mef}
        \lambda_Z=(\ev_{z_0})_*\mathfrak h_L,
\end{equation}
where $\mathfrak h_L$ is the normalized Haar measure on $L$.
The factor map $\pi$ induces a continuous surjective semigroup homomorphism
$\pi_E:E(X,G)\longrightarrow E(Z,G)=L$
such that $\pi\circ\eta=\pi_E(\eta)\circ\pi$
for every $\eta\in E$.
We have $\pi_E[\CM]=L$ and $\pi_E(u)$ is equal to the identity of
$L$, and the restriction $\pi_E|_K:K\longrightarrow L$
is a continuous epimorphism of compact Hausdorff groups.
Hence,
\begin{equation}
\label{eq:haar-factor-pushforward}
        (\pi_E|_K)_*\mathfrak h_K=\mathfrak h_L.
\end{equation}

We now compute the projection of $\nu_K$ to $Z$. 
For every
$\varphi\in C(Z,\mathbb C)$, the function
$$E\ni\eta\longmapsto\varphi\bigl(\pi(\eta(x_0))\bigr)\in\mathbb{C},$$
is continuous. Using decompression,
\eqref{eq:haar-factor-pushforward}, and \eqref{eq:haar-on-mef}, we obtain
\begin{align*}
        \int_Z \varphi\,d(\pi_*\nu_K)
        &=
        \int_E \varphi\bigl(\pi(\eta(x_0))\bigr)
        \,d\mu_K(\eta)
        =
        \int_K \varphi\bigl(\pi(k(x_0))\bigr)
        \,d\mathfrak h_K(k)\\
        &=
        \int_K \varphi\bigl(\pi_E|_K(k)z_0\bigr)
        \,d\mathfrak h_K(k)
        =
        \int_L \varphi(\ell z_0)\,d\mathfrak h_L(\ell)\\
        &=
        \int_Z \varphi\,d\lambda_Z.
\end{align*}
Hence,
\begin{equation}
\label{eq:ordinary-decompression-projects-to-mef-haar}
        \pi_*\nu_K=\lambda_Z.
\end{equation}

It remains to use regularity of the almost one-to-one extension.
By a standard argument the set $Z_0$
is a $G_\delta$ subset of $Z$, and hence is Polish. The set
$X_0:=\pi^{-1}(Z_0)$ is likewise a $G_\delta$ subset of $X$, and hence is Polish. The restriction
$$
\pi|_{X_0}:X_0\longrightarrow Z_0
$$
is a continuous bijection. By the Lusin--Souslin theorem, its inverse
$$
s:Z_0\longrightarrow X_0
$$
is Borel.
Now, let $\rho$ be a Borel probability measure on $X$ such that
$\pi_*\rho=\lambda_Z$. Since $\lambda_Z(Z_0)=1$, we have $\rho(X_0)=1$.
Therefore, for every Borel set $B\subseteq X$,
$$
\rho(B)
=
\lambda_Z\bigl(s^{-1}(B\cap X_0)\bigr).
$$
Thus, $\rho$ is uniquely determined by $\lambda_Z$, and consequently there
is exactly one Borel probability measure on $X$ which projects to
$\lambda_Z$.

By \eqref{eq:ordinary-decompression-projects-to-mef-haar}, the measure
$\nu_K$ is such a lift. The unique invariant measure, say $\mu_X$, is also
such a lift, because $\pi_*\mu_X$ is $G$-invariant on the uniquely ergodic
flow $(Z,G)$ and hence equals $\lambda_Z$.
\end{proof}

\begin{theorem}\label{thm:nonmetric-unique-ergodicity}
Every minimal tame amenable flow is uniquely ergodic.
\end{theorem}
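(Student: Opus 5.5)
We reduce to the metrizable case, Lemma~\ref{thm:ordinary-ellis-glasner-regime}, using a separable $C^*$-subalgebra argument as announced in the introduction. Let $(X,G)$ be minimal, tame and amenable, and suppose $\mu_1,\mu_2\in\Prob_G(X)$. It suffices to show that $\int f\,d\mu_1=\int f\,d\mu_2$ for every $f\in C(X)$. So fix $f\in C(X)$ and let
$$A:=C^*\bigl(\{g\cdot f:g\in G\}\cup\{1\}\bigr)\subseteq C(X).$$
This algebra is $G$-invariant. It is separable provided $G$ is countable. For uncountable $G$ we instead restrict to the action of a countable subgroup $H\leqslant G$ containing nothing special. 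The only point to check is that $(X,H)$ is still tame, which holds because $E(X,H)\subseteq E(X,G)$ and so its elements are fragmented.

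The minimality of $X$ under $H$, however, may fail. It is cleaner to keep $G$ and argue differently: take $A$ to be a separable $G$-invariant unital $C^*$-subalgebra containing $f$. One constructs it by closing under the action of countably many group elements in a countable chain. This is the step I expect to be the main obstacle, since arbitrary $G$ need not leave a countable set invariant. If that fails, I would work with countable $H$ but choose $H$ by a Löwenheim--Skolem closure argument. The aim is to make $Y:=\operatorname{Spec}(A)$ an $H$-flow for which minimality of $X$ under $G$ transfers. Concretely, for each $a\in A_+$ nonzero we need finitely many $h\in H$ with $\sum h\cdot a$ bounded below, and $H$ can be enlarged countably many times to capture such witnesses for a dense countable set of $a$. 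That gives minimality of $(Y,H)$.

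With this in hand, we set $Y:=\operatorname{Spec}(A)$ and let $q_A:X\to Y$ be the canonical factor map. Then $Y$ is compact metrizable, minimal (as a factor of a minimal flow, or by the closure construction just described), amenable since $(q_A)_*\mu_1$ is invariant, and tame. Tameness of $Y$ is the key input. Every $\eta\in E(Y)$ is of the form $\pi_E(p)$ for some $p\in E(X)$, and $q_A\circ p=\eta\circ q_A$. Composing with a continuous function $\varphi$ on $Y$, the map $\varphi\circ\eta\circ q_A=\varphi\circ q_A\circ p$ is fragmented. By Lemma~\ref{lem:descent-complex}, $\varphi\circ\eta$ is then fragmented for every $\varphi$, and hence $\eta$ is fragmented because $Y$ is metrizable.

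Now Lemma~\ref{thm:ordinary-ellis-glasner-regime} applies to $Y$ and shows that $(Y,G)$ (or $(Y,H)$) is uniquely ergodic. Both $(q_A)_*\mu_1$ and $(q_A)_*\mu_2$ are invariant on $Y$, so they coincide. Since $f=\hat f\circ q_A$ for the Gelfand transform $\hat f\in C(Y)$, we obtain
$$\int_X f\,d\mu_1=\int_Y\hat f\,d(q_A)_*\mu_1=\int_Y\hat f\,d(q_A)_*\mu_2=\int_X f\,d\mu_2.$$
As $f\in C(X)$ was arbitrary, Riesz--Markov uniqueness (Fact~\ref{fact:riesz-complex}) gives $\mu_1=\mu_2$. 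Hence $(X,G)$ is uniquely ergodic.
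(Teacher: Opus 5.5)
Your final route is essentially the paper's proof. It takes a countable subgroup $H\leqslant G$ obtained by a L\"owenheim--Skolem closure that captures finite covering witnesses, and a separable $H$-invariant unital algebra $A\ni f$. It then applies Lemma~\ref{thm:ordinary-ellis-glasner-regime} to the metrizable minimal tame amenable $H$-flow $\operatorname{Spec}(A)$. The paper argues by contradiction from one separating $f$, but that difference is immaterial. The earlier detours should be dropped. As you observe yourself, $C^*(\{g\cdot f:g\in G\}\cup\{1\})$ is generally not separable, and a separable $G$-invariant subalgebra containing $f$ need not exist.

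The one step that does not work as written is the minimality criterion. Capturing witnesses only for a norm-dense countable subset of $A_+$ gives nothing. If $D_0$ is countable and dense in $A_+$, then so is $\{a+\frac1n:a\in D_0,\ n\geqslant1\}$. All its members are bounded below, so the witness $\{e\}$ suffices for each of them. Hence $H=\{e\}$ passes your test, although $(\operatorname{Spec}(A),\{e\})$ is not minimal when $f$ is nonconstant.

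What you need is a countable family whose nonvanishing sets form a $\pi$-base of $\operatorname{Spec}(A)$. One choice is all $(a-r)_+$ with $a$ in a countable dense subset of the self-adjoint part of each stage algebra $A_n$ of your closure, and $r>0$ rational. To see this works, let $V\subseteq\operatorname{Spec}(A)$ be nonempty open. Pick $b\in A$ with $0\leqslant b\leqslant 1$, $b(y_0)=1$ for some $y_0\in V$, and $b=0$ off $V$. Approximate $b$ within $\frac13$ by such an $a$. Then $(a-\frac23)_+$ is nonzero and vanishes off $V$, so its witness yields finitely many $H$-translates of $V$ that cover $\operatorname{Spec}(A)$.

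The paper does the same thing with sets instead of functions. For each member $U$ of a countable base of $\operatorname{Spec}(A_n)$, it adds a finite $F_U\subseteq G$ whose translates of the preimage of $U$ cover the whole space. It then uses the embedding $\operatorname{Spec}(A)\hookrightarrow\prod_n\operatorname{Spec}(A_n)$ to see that every nonempty open set contains such a preimage. With this repair, the rest of your argument goes through: tameness of the factor via Lemma~\ref{lem:descent-complex}, $H$-invariance of $(q_A)_*\mu_i$, and $f=\hat f\circ q_A$.
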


\begin{proof}
Let $(Y,G)$ be minimal and tame, and suppose that
$\mu_0,\mu_1\in\operatorname{Prob}_G(Y)$ are distinct.  Choose
$f\in C(Y)$ such that
$$\int_Y f\,d\mu_0\neq\int_Y f\,d\mu_1.$$
We inductively construct a countable subgroup $G_0\leqslant G$ and a metrizable
minimal $G_0$-factor through which $f$ descends.

Put $A_0=C^*(1,f)\subseteq C(Y)$ and
$G_{0,0}=\{1\}$.  Now, suppose that 
a countable subgroup $G_{0,n}\leqslant G$
and a separable unital
$G_{0,n}$-invariant $C^*$-algebra $A_n\subseteq C(Y)$ have been chosen.  
Let
$Y_n=\operatorname{Spec}(A_n)$, let $\pi_n:Y\to Y_n$ be the Gelfand
dual of $A_n\subseteq C(Y)$, i.e. $\pi_n(y)(a):=a(y)$,
and choose a countable base $\mathcal U_n$ for $Y_n$.  For
each nonempty $U\in\mathcal U_n$, minimality and compactness give a
finite set $F_U\subseteq G$ such that
$$Y=\bigcup_{g\in F_U}g\pi_n^{-1}[U].$$
Let $G_{0,n+1}$ be generated by $G_{0,n}$ together with all $F_U$ for 
$\emptyset\neq U\in\mathcal U_n$. Let $A_{n+1}:=C^*\bigl(G_{0,n+1}\cdot A_n\bigr)$.

Set
$$G_0=\bigcup_{n<\omega}G_{0,n},\qquad
 A=\overline{\bigcup_{n<\omega} A_n},\qquad Z=\operatorname{Spec}(A),$$
and denote the Gelfand dual of $A\subseteq C(Y)$ by $\pi:Y\to Z$.
For $y\in Y$ and  $a\in A$, we have $\pi(y)(a)=a(y)$.
The algebra $A$ is
separable and $G_0$-invariant; hence $Z$ is a metrizable
$G_0$-flow. 

For each $n<\omega$, let $r_n:Z\to Y_n$ be the restriction map
induced by $A_n\subseteq A$; thus $\pi_n=r_n\circ\pi$.  If $n\leqslant m$,
let $r_{m,n}:Y_m\to Y_n$ be the restriction map induced by
$A_n\subseteq A_m$.  Then $r_n=r_{m,n}\circ r_m$.
The compatible family $(r_n)_{n<\omega}$ induces a continuous map
$$R:Z\longrightarrow\prod_{n<\omega}Y_n,
 \qquad R(z)=(r_n(z))_{n<\omega}.$$
This map is injective.  Indeed, if $R(z)=R(z')$, then the characters
$z$ and $z'$ agree on every $A_n$, hence on the norm-dense subalgebra
$\bigcup_nA_n$ of $A$, and therefore on all of $A$.  Since $Z$ is
compact and $\prod_nY_n$ is Hausdorff, $R$ is a topological
embedding.  Thus the topology of $Z$ is the initial topology induced
by the family $(r_n)_{n<\omega}$.

Consequently, if
$\emptyset \neq V\subseteq Z$ is open, there are $n<\omega$ 
and a nonempty
$U\in\mathcal U_n$ such that $r_n^{-1}[U]\subseteq V$.  For
$g\in F_U\subseteq G_0$, equivariance of $\pi$ gives
$$\pi^{-1}\bigl[g r_n^{-1}[U]\bigr]=g\pi_n^{-1}[U].$$
We have
$$\pi^{-1}\Big[\bigcup_{g\in F_U}g r_n^{-1}[U]\Big]=
 \bigcup_{g\in F_U}\pi^{-1}\bigl[g r_n^{-1}[U]\bigr] =
 \bigcup_{g\in F_U}g\pi_n^{-1}[U]=Y.$$
Since $\pi:Y\to Z$ is surjective, it follows that
$Z=\bigcup_{g\in F_U}g r_n^{-1}[U]$, and hence
$$Z=\bigcup_{g\in F_U}g r_n^{-1}[U]\subseteq\bigcup_{g\in F_U}gV.$$
Thus $(Z,G_0)$ is minimal.

Since $f\in A$, its Gelfand transform $\hat{f}:Z\to\mathbb C$, 
($\hat{f}(z)=z(f)$ for $z\in Z$) belongs to $C(Z)$.  
Moreover, for every $y\in Y$, $\hat{f}(\pi(y))=\pi(y)(f)=f(y)$.
Thus $f=\hat{f}\circ\pi$.

Note that $(Z,G_0)$ is tame, 
$\pi_*\mu_0$ and $\pi_*\mu_1$ are $G_0$-invariant
and are separated by $\hat{f}$:
$$\int \hat{f}d \pi_* \mu_0=\int (\hat{f} \circ \pi)d \mu_0 = \int f d \mu_0  \ne \int fd \mu_1=\int \hat{f}d \pi_* \mu_1.$$
The metrizable minimal tame flow
$(Z,G_0)$ is therefore amenable but not uniquely ergodic.  This
contradicts Lemma \ref{thm:ordinary-ellis-glasner-regime}.
\end{proof}

\subsection{Minimality of supports}
We start with a very useful lemma which is our technique to replace the Lebesgue dominated convergence theorem when working with general nets. 
It will be crucial in Subsection \ref{subsec:Haar.invariance}, and here it shortens a bit the proof of Proposition \ref{prop:tame-ergodic-support-minimal}.
More precisley, let $\eta\in E(Y,G)$ be a limit of a net $(g_i)_{i\in I}$ in $G$, let $\mu\in\Prob_G(Y)$ and $f\in C(Y)$. For each $i\in I$ we have
$$\int_Y f\circ g_i\,d\mu= \int_Y f\,d(g_i)_*\mu=\int_Y f\,d\mu,$$
but it is not clear whether we can pass with the above equalities to the limit of $(g_i)_{i\in I}=\eta$, to obtain $\int f\circ\eta \,d\mu=\int f\,d\mu$. Tameness helps as follows:

\begin{lemma}\label{lemma:tameness.dominated.convergence}
    Let $(Y,G)$ be tame, $\nu\in\Prob(Y)$, $f\in C(Y)$. 
    Let $\eta\in E(Y,G)$ and let $\eta^\#\in E(\Prob(Y),G)$ be as in Fact \ref{fact:tame-injective}. Then, we have
    $$\int_Y f\circ\eta \,d\nu = \int_Y f\,d\eta^\# (\nu).$$
\end{lemma}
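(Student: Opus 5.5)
The plan is to realize both sides as the value at the point $\nu$ of a single function on $\Prob(Y)$, and then use the barycentric formula for fragmented affine functions together with Fact~\ref{fact:tame-injective}. Define
$$F:\Prob(Y)\longrightarrow\mathbb C,\qquad F(\rho):=\int_Y f\,d\eta^\#(\rho).$$
The map $\rho\mapsto\int_Y f\,d\rho$ is continuous and affine on $\Prob(Y)$. The map $\eta^\#$ is a pointwise limit of the affine maps $(g_i)_*$, where $(g_i)_i$ is a net in $G$ with $g_i\to\eta$ in $E(Y,G)$. By Fact~\ref{fact:tame-injective}, $g_i\to\eta^\#$ in $E(\Prob(Y),G)$, so $\eta^\#$ is affine. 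Hence $F$ is affine. It is also fragmented, since $\eta^\#$ is fragmented and composing with a continuous function preserves fragmentedness. Here I note that $(\Prob(Y),G)$ is tame: its enveloping semigroup is isomorphic to $E(Y,G)$ by Fact~\ref{fact:tame-injective}, and I would cite the standard fact from \cite{GlasnerMegrelishvili2013} that each $\eta^\#$ is a fragmented self-map of $\Prob(Y)$. The real and imaginary parts of $F$ are therefore real-valued fragmented affine functions, and by \cite[Theorem~4.21]{LMNS10} they satisfy the barycentric formula.

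Next, apply the barycentric formula to the measure $\Lambda:=\delta_*\nu\in\Prob(\Prob(Y))$, where $\delta:Y\to\Prob(Y)$, $y\mapsto\delta_y$, is the continuous Dirac embedding. The barycenter of $\Lambda$ is $\nu$. Indeed, every continuous affine function on $\Prob(Y)$ has the form $\rho\mapsto\int h\,d\rho+c$ for some real $h\in C(Y)$, and integrating it against $\Lambda$ gives $\int h\,d\nu+c$. The barycentric formula then yields
$$F(\nu)=\int_{\Prob(Y)}F\,d\Lambda=\int_Y F(\delta_y)\,d\nu(y).$$
Since $\eta^\#$ restricted to Dirac measures is $\eta$ (this is what $\operatorname{res}$ means), we have $\eta^\#(\delta_y)=\delta_{\eta(y)}$. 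Therefore $F(\delta_y)=f(\eta(y))$, and the right-hand side equals $\int_Y f\circ\eta\,d\nu$. The left-hand side is $\int_Y f\,d\eta^\#(\nu)$ by definition, which proves the identity.

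One technical point is that integrating $F$ over $\Lambda$ requires universal measurability. For the composite $F\circ\delta=f\circ\eta$ this also follows from Fact~\ref{fact:univ}, because $f\circ\eta$ is fragmented on $Y$.

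The main obstacle is justifying that $\eta^\#$, and hence $F$, is fragmented as a map on $\Prob(Y)$, i.e. that tameness of $Y$ passes to $\Prob(Y)$. If citing this directly is unavailable, I would argue through the isomorphism of Fact~\ref{fact:tame-injective}. The functional $\rho\mapsto\int f\,d\eta^\#(\rho)$ is a pointwise limit of the continuous functionals $\rho\mapsto\int f\circ g_i\,d\rho$. Its fragmentedness on the compact convex set $\Prob(Y)$ then follows from the Bourgain--Fremlin--Talagrand/Rosenthal-type characterization of tameness: the family $\{f\circ g\}$ contains no independent sequence, and this property is preserved under passing to the closed convex hull (the Talagrand/Köhler result used in \cite{GlasnerMegrelishvili2013}).
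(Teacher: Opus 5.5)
Your proposal is correct and follows essentially the same route as the paper. The paper also applies \cite[Theorem~4.21]{LMNS10} to the fragmented affine function $\rho\mapsto\int_Y f\,d\eta^\#(\rho)$ on $\Prob(Y)$ at the measure $\delta_*\nu$, shows that this measure has barycenter $\nu$, and concludes using $\eta^\#(\delta_y)=\delta_{\eta(y)}$. The differences are minor: the paper reduces to real-valued $f$ instead of splitting into real and imaginary parts, obtains tameness of $(\Prob(Y),G)$ from Lemma~\ref{lem:finite-powers-of-tame-flows}(3), Fact~\ref{fact:tame-injective} and preservation of tameness under factors rather than by a direct citation, and leaves implicit the affineness of $\eta^\#$, which you verify explicitly.
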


\begin{proof}
Without loss of generality we may assume that $f\in C(Y,\mathbb{R})$. 
By Lemma \ref{lem:finite-powers-of-tame-flows}(3), Fact \ref{fact:tame-injective}, and preservation of tameness under taking factors,
$(\Prob(Y),G)$ is tame, and thus 
$$\eta^\#:\Prob(Y)\to\Prob(Y)$$
is a fragmented map. The map $\hat{f}:\Prob(Y)\to\mathbb{R}$, given by $\hat{f}(\lambda)=\int f\,d\lambda$, is continuous. Thus, the map $\hat{f}\circ\eta^\#:\Prob(Y)\to \mathbb{R}$ is fragmented, bounded and affine.
Thus, by \cite[Theorem 4.21]{LMNS10}, $\hat{f}\circ\eta^\#$ satisfies the barycentric formula (cf. the last paragraph of Subsection \ref{subsec:flows-and-invariant-measures}).

Let $\delta:Y\to\Prob(Y)$ denote the embedding via Dirac measures, i.e. $\delta(y)=\delta_y$. Consider the measure $\delta_*\nu\in\Prob\big(\Prob(Y)\big)$.

\begin{clm}
    The barycenter $b(\delta_*\nu)\in\Prob(Y)$ 
    of $\delta_*\nu$ is equal to $\nu$.
\end{clm}

\begin{clmproof}
    Let $h\in C(Y,\mathbb{R})$ and consider $\hat{h}:\Prob(Y)\to\mathbb{R}$,
    given by  $\hat{h}(\lambda):=\int h d\lambda$. 
    Applying the barycentric formula for $\hat{h}$, we have
    \begin{IEEEeqnarray*}{rCcCl}
        \hat{h}\big(b(\delta_*\nu)\big) &=& \int\limits_{\mu\in\Prob(Y)} \hat{h}(\mu)\,d(\delta_*\nu)(\mu) 
        &=& \int\limits_Y \hat{h}(\delta_y)\,d\nu(y) \\
        &=& \int\limits_Y h(y)\,d \nu(y) = \hat{h}(\nu) &&
    \end{IEEEeqnarray*}
    Because $h\in C(Y,\mathbb{R})$ was arbitrary, we conclude $b(\delta_*\nu)=\nu$.
\end{clmproof}
Now, we use Claim 1 and apply the barycentric formula to $\hat{f}\circ\eta^\#$:
\begin{IEEEeqnarray*}{rCl}
    \int\limits_Yf\, d(\eta^\#\nu) &=& (\hat{f}\circ\eta^\#)(\nu) = 
    (\hat{f}\circ\eta^\#)\big(b(\delta_*\nu)\big) \\
    &=& \int\limits_{\mu\in\Prob(Y)}(\hat{f}\circ\eta^\#)(\mu)\,d(\delta_*\nu)(\mu) \\
    &=& \int\limits_Y (\hat{f}\circ\eta^\#\circ\delta)(y)\,d\nu(y) \\
    &=& \int\limits_Y \hat{f}(\delta_{\eta(y)})\,d\nu(y)=\int\limits_Y f\circ\eta\,d\nu
\end{IEEEeqnarray*}
\end{proof}

A special instance of the minimal-support phenomenon below was obtained
for tame semicascades, under additional hypotheses, in
\cite[Theorem~3.6]{Romanov16}. We establish the result here for arbitrary
tame flows, without metrizability assumptions and for general acting
groups.

\begin{proposition}\label{prop:tame-ergodic-support-minimal}
Let $(X,G)$ be a tame flow and let $\mu\in\Erg_G(X)$. Then
$\supp(\mu)$ is a minimal subflow of $X$.
\end{proposition}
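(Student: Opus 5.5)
The plan is to show that $\mu$ is invariant under every element of the enveloping semigroup, not only under $G$, and then to use an element of a minimal left ideal to push $\supp(\mu)$ onto minimal sets. Replacing $X$ by $S:=\supp(\mu)$, which is a tame subflow, we may assume $\supp(\mu)=X$.

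\textbf{Step 1: invariance under all of $E(X,G)$.} Let $\eta\in E(X,G)$ be the limit of a net $(g_i)$ in $G$. By Fact~\ref{fact:tame-injective}, $\eta^\#$ is the pointwise limit of the $g_i$ acting on $\Prob(X)$. Since $(g_i)_*\mu=\mu$ for every $i$, we get $\eta^\#(\mu)=\mu$. Lemma~\ref{lemma:tameness.dominated.convergence} then gives
$$\int_X f\circ\eta\,d\mu=\int_X f\,d\mu\qquad\text{for all }f\in C(X),\ \eta\in E(X,G).$$

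\textbf{Step 2: averaging over an Ellis group.} Fix $\CM\unlhdm E(X,G)$, an idempotent $u\in\CM$, and $K=u\CM$. Integrate the identity of Step 1 over $k\in K$ against $\mathfrak h_K$, and then exchange the order of integration. The inner function $k\mapsto f(kx)$ is $\mathfrak h_K$-measurable by Corollary~\ref{cor:Fj.measurable}, and its integral equals $\int f\,d\bigl((\ev_x)_*\mu_K\bigr)$. This should yield
$$\int_X f\,d\mu=\int_X\Bigl(\int f\,d(\ev_x)_*\mu_K\Bigr)\,d\mu(x).$$
So $\mu$ becomes a mixture of the measures $(\ev_x)_*\mu_K$, and by Remark~\ref{remark: support of the compression}(3) each of these is supported in the minimal subflow $\CM x$. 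Justifying the exchange of integrals needs a Fubini argument on $K\times X$. I would get it from joint fragmentability of $(k,x)\mapsto f(kx)$, via Lemma~\ref{lem:finite-powers-of-tame-flows}(3) applied to $E\times X$. Alternatively, I would first reduce to a countable subgroup and a metrizable factor, using the separable $C^*$-algebra construction from the proof of Theorem~\ref{thm:nonmetric-unique-ergodicity}.

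\textbf{Step 3: from the mixture to a single minimal set.} The map $x\mapsto\CM x$ is $G$-equivariant, since $\CM gx=\CM x$ up to the action of $g$. Each $(\ev_x)_*\mu_K$ is concentrated on the union of minimal subflows. Ergodicity of $\mu$ should then force the mixture to concentrate on a single minimal subflow $N$: any $G$-invariant partition of the minimal sets by a continuous invariant would split $\mu$ into a nontrivial convex combination. Once we have $\mu(N)=1$ with $N$ closed, we get $X=\supp(\mu)\subseteq N$, so $X$ is minimal.

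\textbf{Expected main obstacle.} I expect Step 3, together with the measurability behind the Fubini exchange in Step 2, to be the hardest part. Showing that $\mu$ gives full measure to one minimal set requires an invariant Borel function separating distinct minimal subflows, for example $x\mapsto\inf_{p\in E}f(px)$, which is $G$-invariant but has unclear measurability. I would first attempt the whole argument in the metrizable, countable-group setting, where such functions are Borel. I would then lift it to the general case through the Gelfand reduction, choosing the countable subalgebra so that it detects a given non-minimal open set in $S$.
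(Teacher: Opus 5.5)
Step~1 is correct, and it is exactly the input the paper uses (only for an idempotent $u$ of a minimal left ideal). The proposal, however, leaves open the two steps that carry the proof, and the routes you sketch for them do not work as stated.

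\textbf{Step~2.} The Fubini exchange is unjustified. Even granting that $E\times X$ is tame, the evaluation $(\eta,x)\mapsto\eta x$ is not an element of its enveloping semigroup, so no joint fragmentability follows. Moreover, $j:(K,\tau)\to E$ need not even be Borel (Appendix~\ref{sec:A1}), so measurability of $(k,x)\mapsto f(kx)$ for $\mathfrak h_K\otimes\mu$ is far from clear. The averaging over $K$ is in any case unnecessary. After a metrizable reduction with a countable group, $u$ is of Baire class one and $u_*\nu$ is regular. Then Lemma~\ref{lemma:tameness.dominated.convergence} gives $u_*\nu=u^\#\nu=\nu$, hence $\nu(\{z:uz=z\})=1$. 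So almost every point is almost periodic, with no Fubini argument.

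\textbf{The passage to the general case.} This is the genuine gap. A $G$-ergodic $\mu$ need not push forward to a $G_0$-ergodic measure on a $G_0$-factor $Z$, so your Step~3 (invariant functions are a.e.\ constant) cannot simply be run on $Z$. On $X$ itself the obstacle is not measurability: $x\mapsto\inf_{g\in G}f(gx)$ is upper semicontinuous, hence Borel. The obstacle is that you would have to intersect uncountably many conull sets. Also, $x\mapsto\CM x$ is not $G$-invariant off the almost periodic points, since $\CM gx=(\CM g)x$ may be a different minimal set.

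\textbf{The missing idea.} Use ergodicity only in the form $\mu(GW)=1$ for every nonempty open $W\subseteq\supp(\mu)$, and build this into the reduction. For each basic open $U$ of each intermediate spectrum, regularity gives a countable $F_U\subseteq G$ with $\mu\bigl(\bigcup_{g\in F_U}g\pi_n^{-1}[U]\bigr)=1$; these sets are put into $G_0$. Then $\nu:=\pi_*\mu$ satisfies $\nu(G_0V)=1$ for every nonempty open $V\subseteq Z$, so $\nu$-a.e.\ point of $Z$ is $G_0$-transitive. Choosing $z$ both transitive and $u$-fixed gives $Z=E_Zuz=\CM_Zz$, so $Z$ is minimal. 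A function $f\in A$ with $f|_C=0$ and $f(y_0)=1$, for a proper subflow $C$, then yields the contradiction. This transitive-plus-almost-periodic argument replaces your Step~3 entirely.
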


\begin{proof}
Set $Y:=\supp(\mu)$, and regard $\mu$ as a probability measure on $Y$.
Then $(Y,G)$ is tame, $\mu\in\Erg_G(Y)$, and $\mu$ has full support.

Suppose, towards a contradiction, that $Y$ is not minimal. Choose a
proper nonempty subflow $C\subsetneq Y$, a point $y_0\in Y\setminus C$,
and a function $f\in C(Y,[0,1])$ such that
$f|_C=0$ and $f(y_0)=1$.
As in the proof of Theorem~\ref{thm:nonmetric-unique-ergodicity}, we
shall construct a countable subgroup $G_0\leqslant G$ and a metrizable
tame $G_0$-factor through which $f$ descends. The construction will also
ensure that every nonempty open subset of the factor has conull
$G_0$-orbit with respect to the projected measure.

Put $A_0:=C^*(1,f)\subseteq C(Y)$, $G_{0,0}:=\{1_G\}$.
Assume that a countable subgroup $G_{0,n}\leqslant G$ and a separable
unital $G_{0,n}$-invariant $C^*$-algebra $A_n\subseteq C(Y)$ have been chosen. 
Let
$$Y_n:=\operatorname{Spec}(A_n),$$
let $\pi_n:Y\to Y_n$ be the Gelfand dual of $A_n\subseteq C(Y)$, and
choose a countable base $\mathcal U_n$ for $Y_n$.
Fix a nonempty $U\in\mathcal U_n$ and put $W_U:=\pi_n^{-1}[U]$.
The set $W_U$ is nonempty and open, so $\mu(W_U)>0$. Consider an open and $G$-invariant set
$$GW_U=\bigcup_{g\in G}gW_U.$$
By \cite[Proposition~12.4]{Phelps2001}, ergodicity of $\mu$ implies $\mu(GW_U)=1$. By regularity and compactness, there is a countable set $F_U\subseteq G$ such that
$$\mu\big(\bigcup_{g\in F_U}gW_U\big)=1.$$
Indeed, for every $m\geqslant1$, choose a compact set $C_m\subseteq GW_U$ 
with $\mu(C_m)>1-1/m$ and then a finite set
$F_{U,m}\subseteq G$ such that
$$C_m\subseteq\bigcup_{g\in F_{U,m}}gW_U.$$
We set $F_U:=\bigcup_{m\geqslant1}F_{U,m}$.

Let $G_{0,n+1}$ be generated by $G_{0,n}$ together with all the sets
$F_U$, where $\emptyset\neq U\in\mathcal U_n$, and let $A_{n+1}:=C^*\bigl(G_{0,n+1}\cdot A_n\bigr)$.
Set
$$G_0:=\bigcup_{n<\omega}G_{0,n},\qquad
A:=\overline{\bigcup_{n<\omega}A_n},\qquad
Z:=\operatorname{Spec}(A),$$
and let $\pi:Y\to Z$ be the Gelfand dual of $A\subseteq C(Y)$. 
The flow $(Z,G_0)$ is tame and metrizable. We will show that it is also a minimal flow.

For each $n<\omega$, let $r_n:Z\longrightarrow Y_n$
be the restriction map induced by $A_n\subseteq A$; thus
$\pi_n=r_n\circ\pi$. If $n\leqslant m$, let
$r_{m,n}:Y_m\to Y_n$ be induced by $A_n\subseteq A_m$. Then
$r_n=r_{m,n}\circ r_m$. Exactly as in the proof of
Theorem~\ref{thm:nonmetric-unique-ergodicity}, the map
$$R:Z\longrightarrow\prod_{n<\omega}Y_n,
\qquad R(z):=(r_n(z))_{n<\omega},$$
is a topological embedding. Consequently, if
$\emptyset\neq V\subseteq Z$ is open, then there are $n<\omega$ and a
nonempty $U\in\mathcal U_n$ such that
$$r_n^{-1}[U]\subseteq V.$$

Put $\nu:=\pi_*\mu$. For every nonempty $U\in\mathcal U_n$ and every
$g\in F_U\subseteq G_0$, $G_0$-equivariance of $\pi$ gives
$\pi^{-1}\bigl[g r_n^{-1}[U]\bigr]=g\pi_n^{-1}[U]$.
It follows from the choice of $F_U$ that
\begin{IEEEeqnarray*}{rCl}
    \nu\big(\bigcup_{g\in F_U}g r_n^{-1}[U]\big) &=& 
    \mu\Big(\pi^{-1}\big[\bigcup_{g\in F_U}g r_n^{-1}[U] \big]\Big)\\
    &=& \mu\big(\bigcup_{g\in F_U}g\pi_n^{-1}[U]\big)=1.
\end{IEEEeqnarray*}
Therefore, for every nonempty open subset $V\subseteq Z$, $\nu(G_0V)=1$.

Choose a countable base $(V_m)_{m<\omega}$ of nonempty open subsets of
$Z$ and put
$$T:=\bigcap_{m<\omega}G_0V_m.$$
Then $\nu(T)=1$, and every point of $T$ has dense $G_0$-orbit in $Z$.

We now use Lemma~\ref{lemma:tameness.dominated.convergence} to complete
the minimality argument. Let $E_Z:=E(Z,G_0)$, choose a minimal left ideal
$\mathcal M_Z\unlhd_m E_Z$, and let $u\in\mathcal M_Z$ be an idempotent.
Since $\nu$ is $G_0$-invariant, it is fixed by every element of
$E(\Prob(Z),G_0)$. In particular, $u^\#(\nu)=\nu$ (notation as in Fact \ref{fact:tame-injective}).
Since $Z$ is compact metrizable and $u$ is fragmented, $u$ is Borel
measurable (by \cite[Lemma~2.5(2)]{GlasnerMegrelishvili2012}, $u$ is of
Baire class one). 
Because $Z$ is compact metrizable, $u_*\nu$ is regular \cite[Theorem~17.10]{Kechris95}, and in consequence,
since by Lemma \ref{lemma:tameness.dominated.convergence} for every $f \in C(Z)$ we have
$$\int_Z fd u^{\#}\nu = \int_Z (f \circ u)d\nu=\int_Z f d u_*\nu,$$ 
we get $u^{\#} \nu = u_* \nu$.
In particular, for every Borel set $A\subseteq Z$,
$$\bigl(u^\#(\nu)\bigr)(A)=\nu\bigl(u^{-1}[A]\bigr).$$

Let $F:=\{z\in Z:uz=z\}$.
The set $F$ is Borel, and $u^2=u$ implies that $u^{-1}[F]=Z$. Therefore,
$$\nu(F)=\bigl(u^\#(\nu)\bigr)(F)=\nu\bigl(u^{-1}[F]\bigr)=1.$$
Choose $z\in F\cap T$. Then $uz=z$ and
$\overline{G_0z}=Z$, so
$$Z=E_Zz=E_Zuz=(E_Zu)z=\mathcal M_Zz.$$
Thus, $(Z,G_0)$ is minimal.

Finally, since $f\in A$, its Gelfand transform $\widehat f\in C(Z)$
satisfies $f=\widehat f\circ\pi$.
The set $\pi(C)$ is a nonempty closed $G_0$-invariant subset of the
minimal flow $Z$, and hence $\pi(C)=Z$. Since $f|_C=0$, it follows that
$\widehat f=0$ (on $Z$), contradicting $f(y_0)=1$. Therefore,
$Y=\supp(\mu)$ is minimal.
\end{proof}

\begin{remark}\label{rem:ergodicity.notnions}
Let $(X,G)$ be a flow. 
Recall that a $G$-invariant measure $\mu$ is ergodic if it is an extreme point of
$\Prob_G(X)$. Alternatively (by \cite[Proposition 12.4]{Phelps2001}), 
if for every measurable $A\subseteq X$
such that $\mu(gA\,\triangle\, A)=0$ holds for each $g\in G$,
we have that $\mu(A)\in\{0,1\}$. In the literature, there is a weaker notion of ergodicity:
\begin{enumerate}
    \item[$(\ddagger)$]  $\mu$ is $G$-invariant and for every measurable $G$-invariant $A\subseteq X$ we have $\mu(A)\in\{0,1\}$.
\end{enumerate}
We claim that $(\ddagger)$ is equivalent to ergodicity for tame flows:
Let $(X,G)$ be tame and let $\mu$ satisfy condition $(\ddagger)$. Set $Y:=\supp(\mu)$ and note that the proof of Proposition \ref{prop:tame-ergodic-support-minimal} might be repeated using only condition $(\ddagger)$. Thus $(Y,G)$ is tame, minimal and amenable. By Theorem \ref{thm:nonmetric-unique-ergodicity},
$\mu\in\Erg_G(Y)$. Now, if $\nu_1,\nu_2\in\Prob_G(X)$ and $\mu=a_1\nu_1+a_2\nu_2$
for some $0<a_1,a_2$ with $a_1+a_2=1$, then $\supp(\nu_1),\supp(\nu_2)\subseteq Y$.
Then, by Theorem \ref{thm:nonmetric-unique-ergodicity}, $\nu_1=\mu=\nu_2$.
\end{remark}

\begin{cor}\label{cor:ergodic-measures-minimal-subflows}
Let $(X,G)$ be tame, and let
$\operatorname{Min}^{\mathrm{am}}_G(X)$ denote the collection of all
amenable minimal subflows of $X$. Then the support map
$$
\Erg_G(X)\longrightarrow\operatorname{Min}^{\mathrm{am}}_G(X),
\qquad
\mu\longmapsto\supp(\mu),
$$
is a bijection. Its inverse assigns to each
$Y\in\operatorname{Min}^{\mathrm{am}}_G(X)$ its unique invariant
probability measure $\mu_Y$, regarded as a measure on $X$.

Consequently, distinct ergodic invariant probability measures on $X$
have disjoint supports. If $(X,G)$ is hereditarily amenable, then
$\operatorname{Min}^{\mathrm{am}}_G(X)$ is the collection of all minimal
subflows of $X$.
\end{cor}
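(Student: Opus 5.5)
We combine Proposition~\ref{prop:tame-ergodic-support-minimal} with Theorem~\ref{thm:nonmetric-unique-ergodicity}. First, we check that the support map is well defined. For $\mu\in\Erg_G(X)$, Proposition~\ref{prop:tame-ergodic-support-minimal} shows that $Y:=\supp(\mu)$ is a minimal subflow. Since $\mu(Y)=1$ and $Y$ is $G$-invariant, the restriction $\mu|_Y$ is a $G$-invariant regular Borel probability measure on $Y$. Hence $Y\in\operatorname{Min}^{\mathrm{am}}_G(X)$.

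Next, we construct the inverse. Fix $Y\in\operatorname{Min}^{\mathrm{am}}_G(X)$. The subflow $(Y,G)$ is tame, because tameness passes to subflows: elements of $E(Y,G)$ are restrictions of elements of $E(X,G)$, and restrictions of fragmented maps are fragmented. It is also minimal and amenable, so Theorem~\ref{thm:nonmetric-unique-ergodicity} makes it uniquely ergodic, with unique invariant measure $\mu_Y$. Regard $\mu_Y$ as a measure on $X$ supported in $Y$. Its support is a nonempty closed $G$-invariant subset of the minimal flow $Y$, hence equals $Y$.

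It remains to show that $\mu_Y$ is ergodic on $X$. Suppose $\mu_Y=a\nu_1+(1-a)\nu_2$ with $\nu_i\in\Prob_G(X)$ and $0<a<1$. Then $\nu_i(X\setminus Y)=0$, so each $\nu_i$ is an invariant measure on $Y$. Unique ergodicity of $Y$ gives $\nu_1=\nu_2=\mu_Y$. This is the same argument as at the end of Remark~\ref{rem:ergodicity.notnions}.

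Now we verify that the two maps are mutually inverse. We have $\supp(\mu_Y)=Y$ by the previous step. Conversely, for $\mu\in\Erg_G(X)$ with $Y=\supp(\mu)$, the measure $\mu|_Y$ is an invariant measure on the uniquely ergodic flow $Y$, so $\mu=\mu_Y$. Thus the support map is a bijection with the stated inverse. The only point needing care is regularity: restricting a regular measure to a closed set, and extending one from a closed subspace, preserves regularity. This is routine.

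For disjointness, take distinct ergodic measures $\mu\neq\mu'$. Their supports are distinct minimal subflows by injectivity. Two distinct minimal subflows are disjoint, since their intersection, if nonempty, would be a subflow of each and hence equal to both.

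Finally, suppose $(X,G)$ is hereditarily amenable. Then every minimal subflow is amenable, so $\operatorname{Min}^{\mathrm{am}}_G(X)$ is the set of all minimal subflows.

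No step is a serious obstacle. The substantive inputs are Proposition~\ref{prop:tame-ergodic-support-minimal} and Theorem~\ref{thm:nonmetric-unique-ergodicity}; everything else is bookkeeping.
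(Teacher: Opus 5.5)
Your proposal is correct and follows the paper's proof. Well-definedness comes from Proposition~\ref{prop:tame-ergodic-support-minimal}, the inverse from unique ergodicity via Theorem~\ref{thm:nonmetric-unique-ergodicity}, and ergodicity of $\mu_Y$ on $X$ from the same convex-decomposition argument; you also spell out that $\mu=\mu_{\supp(\mu)}$ and that distinct minimal subflows are disjoint, which the paper leaves implicit.
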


\begin{proof}
Let $\mu\in\Erg_G(X)$. By
Proposition~\ref{prop:tame-ergodic-support-minimal},
$\supp(\mu)$ is minimal. Thus,
$\supp(\mu)\in\operatorname{Min}^{\mathrm{am}}_G(X)$.

Conversely, let $Y\in\operatorname{Min}^{\mathrm{am}}_G(X)$. Since
$(Y,G)$ is minimal, tame and amenable,
Theorem~\ref{thm:nonmetric-unique-ergodicity} gives a unique invariant
probability measure $\mu_Y$ on $Y$. We have $\supp(\mu_Y)=Y$.
Moreover, $\mu_Y$ is ergodic when regarded as a measure on $X$.
Indeed, if
$$\mu_Y=t\nu_0+(1-t)\nu_1,\qquad 0<t<1,\qquad \nu_0,\nu_1\in\Prob_G(X),$$
then concentration of $\mu_Y$ on $Y$ forces both $\nu_0$ and $\nu_1$
to be concentrated on $Y$. The uniqueness of the invariant probability
measure on $Y$ therefore gives
$\nu_0=\nu_1=\mu_Y$.
\end{proof}

\section{Invariant measures on the Ellis semigroup}
\label{sec:invariance}
We first
characterize amenability of the Ellis flow in terms of hereditary
amenability of all finite powers of $X$; this criterion does not require
tameness (Theorem~\ref{thm:ellis-flow-amenable}). We then prove that, for
tame flows, ordinary hereditary amenability implies the above finite-power
condition
(Theorem~\ref{thm:hereditary-amenability-finite-powers}), which yields
Corollary~\ref{cor:ellis-amenability-characterization}.
Finally,
Theorem~\ref{thm:haar-decompression-amenability} shows that the Haar decompression
is $G$-invariant whenever the corresponding minimal left ideal $\CM$ is amenable as a $G$-flow.
(We do not assume that $(X,G)$ has a dense orbit.)

\subsection{Finite-power hereditary amenability}
Theorem~\ref{thm:ellis-flow-amenable} below shows, without any
tameness assumption, that amenability of the Ellis flow is equivalent
to hereditary amenability of all finite powers of $X$. Its proof uses
the dense algebra of finite-coordinate functions introduced below.
This motivates the following definition.

\begin{definition}
\label{def:her.amenability}
The flow $X$ is \emph{finitely power-hereditarily amenable} if, for every
$n\geqslant1$, every subflow of $X^n$, equipped with the diagonal action
$$
g(x_1,\ldots,x_n):=(gx_1,\ldots,gx_n),
$$
carries a $G$-invariant regular Borel probability measure.
\end{definition}

We shall use the algebra of finite-coordinate functions
$$\mathcal A_{\mathrm{fin}}:=\left\{
F\in C(E(X,G),\mathbb C):\begin{array}{l}
\text{for some $n\geqslant1$, $\bar x\in X^n$, and
$\varphi\in C(X^n,\mathbb C)$,}\\ F(\eta)=\varphi(\eta\bar x)
\text{ for every $\eta\in E(X,G)$}\end{array}\right\}.$$
By concatenating the tuples on which two functions depend, one sees that $\mathcal A_{\mathrm{fin}}$ is closed under sums and products; it is also closed under complex conjugation, scalar multiplication
and contains the constants. It separates points of $E(X,G)$: if $\eta\neq\xi$, choose $x\in X$ with
$\eta(x)\neq\xi(x)$ and then choose $f\in C(X,\mathbb C)$ separating these two points. The function
$$\zeta\longmapsto f(\zeta(x))$$
belongs to $\mathcal A_{\mathrm{fin}}$ and separates $\eta$ and $\xi$.
Hence, by the complex Stone--Weierstrass theorem,
$$
\overline{\mathcal A_{\mathrm{fin}}}^{\|\cdot\|_\infty}
=
C(E(X,G),\mathbb C).
$$

\begin{remark}
\label{rem:ellis-amenability-is-hereditary}
$(E(X,G),G)$ is amenable if and only if $(E(X,G),G)$ is hereditarily amenable.
\end{remark}

\begin{proof}
Only the left-to-right implication requires proof. 
Put $E:=E(X,G)$, and let $\nu$ be a $G$-invariant regular Borel probability measure on $E$.

Let $Y\subseteq E$ be a nonempty subflow and fix $q\in Y$.
By the left-continuity in $E$, the right translation
$$R_q:E\longrightarrow E,\qquad R_q(p):=pq,$$
is continuous and $G$-equivariant (i.e. $R_q(gp)=gR_q(p)$). Moreover, $R_q[E]=Eq=\overline{Gq}$.
Since $Y$ is closed and $G$-invariant and $q\in Y$, one has
$\overline{Gq}\subseteq Y$. Thus, $(R_q)_*\nu$ is a probability measure
on $Y$. Because $R_q$ is $G$-equivariant, $(R_q)_*\nu$ is $G$-invariant.
\end{proof}

\begin{theorem}
\label{thm:ellis-flow-amenable}
$\bigl(E(X,G),G\bigr)$ is amenable if and only if
$(X,G)$ is
finitely power-hereditarily amenable.
\end{theorem}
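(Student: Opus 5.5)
We prove the two implications separately; the forward direction is a pushforward argument, and the converse is a compactness argument using the dense algebra $\mathcal A_{\mathrm{fin}}$.

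\emph{($\Rightarrow$)} Assume $\nu\in\Prob_G(E)$, where $E:=E(X,G)$. Fix $n\geqslant1$ and a subflow $Y\subseteq X^n$, and pick $\bar y\in Y$. The evaluation map
$$\ev_{\bar y}:E\to X^n,\qquad \eta\mapsto\eta\bar y,$$
is continuous and $G$-equivariant. Its image is $\overline{G\bar y}$, which is contained in $Y$ because $Y$ is closed and $G$-invariant. Hence $(\ev_{\bar y})_*\nu$ is a $G$-invariant regular Borel probability measure supported on $Y$.

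\emph{($\Leftarrow$)} Assume $X$ is finitely power-hereditarily amenable. We build an invariant state on $C(E)$ by a compactness argument over finite tuples.

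Let $\mathcal T$ be the directed set of finite tuples $\bar x\in X^n$, ordered by extension (concatenation). For $\bar x\in X^n$ put
$$Y_{\bar x}:=\overline{G\bar x}=E\bar x\subseteq X^n.$$
By hypothesis there is $\rho_{\bar x}\in\Prob_G(Y_{\bar x})$.

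Write $\mathcal A_{\bar x}\subseteq\mathcal A_{\mathrm{fin}}$ for the functions of the form $F(\eta)=\varphi(\eta\bar x)$ with $\varphi\in C(X^n)$. Define
$$\Lambda_{\bar x}(F):=\int_{Y_{\bar x}}\varphi\,d\rho_{\bar x}.$$
This is well defined: if $\varphi(\eta\bar x)=\varphi'(\eta\bar x)$ for all $\eta\in E$, then $\varphi$ and $\varphi'$ agree on $E\bar x=Y_{\bar x}$, which carries $\rho_{\bar x}$.

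Next extend $\Lambda_{\bar x}$ to a state on all of $C(E)$. One option is Hahn--Banach/Krein for states, since $\mathcal A_{\bar x}$ is a unital $C^*$-subalgebra. Concretely, $\mathcal A_{\bar x}\cong C(Y_{\bar x})$ via the surjection $\ev_{\bar x}:E\to Y_{\bar x}$, so we can simply take any Borel probability measure on $E$ pushing forward to $\rho_{\bar x}$; such lifts exist under continuous surjections of compact spaces. Call the resulting state $\tilde\Lambda_{\bar x}$.

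Take a weak-$*$ cluster point $\Lambda$ of the net $(\tilde\Lambda_{\bar x})_{\bar x\in\mathcal T}$ in the state space of $C(E)$.

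We check that $\Lambda$ is $G$-invariant on $\mathcal A_{\mathrm{fin}}$. Fix $F\in\mathcal A_{\bar x_0}$ and $g\in G$. Since $(g^{-1}\cdot F)(\eta)=\varphi(g\eta\bar x_0)$, the function $g^{-1}\cdot F$ also lies in $\mathcal A_{\bar x_0}$. For every tuple $\bar x$ extending $\bar x_0$, we have $\mathcal A_{\bar x_0}\subseteq\mathcal A_{\bar x}$, and $G$-invariance of $\rho_{\bar x}$ gives
$$\tilde\Lambda_{\bar x}(g\cdot F)=\tilde\Lambda_{\bar x}(F).$$
This is the key point where we use the inclusion $\mathcal A_{\bar x_0}\subseteq\mathcal A_{\bar x}$: every $F\in\mathcal A_{\bar x_0}$ is represented as $\varphi\circ\mathrm{pr}$ on the larger tuple, where $\mathrm{pr}$ is the projection onto the $\bar x_0$-coordinates. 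Invariance of $\rho_{\bar x}$ applies because $\mathrm{pr}$ is $G$-equivariant, so the integral is computed consistently.

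Passing to the cluster point, $\Lambda(g\cdot F)=\Lambda(F)$ on $\mathcal A_{\mathrm{fin}}$. Since $\Lambda$ is norm-continuous and $\mathcal A_{\mathrm{fin}}$ is uniformly dense in $C(E)$ by Stone--Weierstrass, the equality holds on all of $C(E)$. Riesz--Markov (Fact~\ref{fact:riesz-complex}) then yields $\mu\in\Prob_G(E)$.

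The step needing the most care is the consistency bookkeeping: checking that the values $\tilde\Lambda_{\bar x}(F)$ are computed correctly and are genuinely $G$-invariant for all sufficiently large $\bar x$. They need not agree across different $\bar x$, and that is acceptable; only eventual invariance is needed for the cluster point. The rest of the argument is routine.
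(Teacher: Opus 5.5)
Your proposal is correct and follows essentially the paper's route. Your forward direction avoids the paper's passage to a minimal subflow, which is indeed unnecessary. For the converse, both arguments lift an invariant measure on $\overline{G\bar x}\subseteq X^n$ to $E$ and combine weak-$*$ compactness with density of $\mathcal A_{\mathrm{fin}}$; you take a cluster point of a net indexed by tuples and apply density after the limit, while the paper uses the finite intersection property of the closed sets $D(g,F,\varepsilon)$ and approximates by $\mathcal A_{\mathrm{fin}}$ beforehand. One small fix: the tuples must be preordered by ``every coordinate of $\bar x_0$ occurs in $\bar x$'' for the index set to be directed, since prefix-extension alone is not directed.
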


\begin{proof}
Assume first that $\bigl(E(X,G),G\bigr)$ is amenable.
Fix \(n\geqslant 1\), and let \(W\subseteq X^n\) be a nonempty subflow. Choose a
minimal subflow \(V\subseteq W\) and a point $\bar x=(x_1,\ldots,x_n)\in V$.
Consider the evaluation map
$$
e_{\bar x}:E(X,G)\longrightarrow V,
\qquad
e_{\bar x}(\eta):=\eta(\bar x).
$$
It is continuous and \(G\)-equivariant.
Moreover, the image of \(e_{\bar x}\) is compact and contains
\(G\bar x\), so minimality of \(V\) gives $e_{\bar x}[E(X,G)]=V$.

The pushforward of a \(G\)-invariant probability measure on \(E(X,G)\)
is therefore a \(G\)-invariant regular Borel probability measure on \(V\),
and hence, 
can be viewed as a measure on $W$ which is supported on $V$.

Now, we prove that if $(X,G)$ is
finitely power-hereditarily amenable then the flow
$\bigl(E(X,G),G\bigr)$ is amenable.
Let $E:=E(X,G)$.
For $g\in G$, $F\in C(E,\mathbb C)$, and $\varepsilon>0$, consider
$$D(g,F,\varepsilon):=\left\{\rho\in\Prob(E):\left|\int_E F(g\eta)\,d\rho(\eta)-\int_E F(\eta)\,d\rho(\eta)\right|\leqslant\varepsilon\right\}.$$
Each $D(g,F,\varepsilon)$ is weak-$*$ closed in $\Prob(E)$.
As $\Prob(E)$ is weak-$*$ compact, it is enough to show that
these sets have
the finite-intersection property (see the end of the proof).

Fix $g_1,\ldots,g_r\in G$,
$F_1,\ldots,F_r\in C(E,\mathbb{C})$
and $\epsilon_1,\ldots,\epsilon_r>0$.
By the density of the algebra
$\mathcal{A}_{\mathrm{fin}}$ in $C(E,\mathbb C)$ (see the paragraph after Definition~\ref{def:her.amenability}), for
every $j\leqslant r$ choose $A_j\in \mathcal{A}_{\mathrm{fin}}$
such that
\begin{equation}
\label{eq:ellis-amenable-approximation}
\|F_j-A_j\|_\infty<\frac{\varepsilon_j}{3}.
\end{equation}
After concatenating the finitely many tuples on which the functions $A_j$
depend, there are a single finite tuple
$\bar x=(x_1,\ldots,x_n)\in X^n$  and functions $\varphi_j\in C(X^n,\mathbb C)$ such that
\begin{equation}
\label{eq:ellis-amenable-coordinate-form}
A_j(\eta)=\varphi_j(\eta\bar x)
\qquad
(\eta\in E,\ 1\leqslant j\leqslant r).
\end{equation}

Consider $e_{\bar x}:E\longrightarrow X^n$ given by $e_{\bar x}(\eta):=\eta(\bar x)$, and set $Y_{\bar x}:=e_{\bar x}[E]$.
The set $Y_{\bar x}$ is a subflow of $X^n$, and $Y_{\bar x}=\overline{G\bar x}$.

Since $X$ is finitely power-hereditarily amenable, there is a
$G$-invariant regular Borel probability measure
$\lambda$ on $Y_{\bar x}$.

\begin{clm}
$\lambda$ has a lift $\rho$ to $E$.    
\end{clm}

\begin{clmproof}
Since
$e_{\bar x}:E\to Y_{\bar x}$ is a continuous surjection, the formula
$$L_0(f\circ e_{\bar x}):=\int_{Y_{\bar x}}f\,d\lambda$$
defines a state on the unital $C^\ast$-subalgebra
$e_{\bar x}^\ast C(Y_{\bar x},\mathbb C)
\subseteq
C(E,\mathbb C)$.
By Krein extension theorem, we can extend $L_0$ to
a state on $C(E,\mathbb C)$. By the Riesz--Markov theorem,
the extension is integration against some
regular Borel probability measure $\rho$ on $E$,
and, by construction,
\begin{equation}
\label{eq:ellis-amenable-lift}
(e_{\bar x})_*\rho=\lambda.
\end{equation}
\end{clmproof}

\noindent
Using \eqref{eq:ellis-amenable-coordinate-form}, \eqref{eq:ellis-amenable-lift}, and the $G$-invariance of $\lambda$, we
obtain, for every $1\leqslant j\leqslant r$,
\begin{IEEEeqnarray*}{rCcCl}
\int_E A_j(g_j\eta)\,d\rho(\eta) &=& \int_{Y_{\bar x}}\varphi_j(g_jy)\,d\lambda(y)
&=& \int_{Y_{\bar x}}\varphi_j(y)\,d\lambda(y)\\
&=& \int_E A_j(\eta)\,d\rho(\eta). & &
\end{IEEEeqnarray*}
Consequently, by
\eqref{eq:ellis-amenable-approximation},
\begin{align*}
&
\left|
\int_E F_j(g_j\eta)\,d\rho(\eta)-\int_E F_j(\eta)\,d\rho(\eta)
\right|
\\
&\qquad\leqslant
\left|
\int_E\bigl(F_j-A_j\bigr)(g_j\eta)\,d\rho(\eta)
\right|+\left|
\int_E\bigl(F_j-A_j\bigr)(\eta)\,d\rho(\eta)
\right|
\\
&\qquad\leqslant
2\|F_j-A_j\|_\infty<\frac{2\varepsilon_j}{3}<\varepsilon_j.
\end{align*}
Thus,
$$\rho\in\bigcap_{j=1}^rD(g_j,F_j,\varepsilon_j),$$
which proves the finite-intersection property.

By compactness of $\Prob(E)$, there is
$$\rho_0\in
\bigcap_{\substack{g\in G,\ F\in C(E,\mathbb C)\\ m\geqslant1}}
D\left(g,F,\frac1m\right).$$
Therefore, for every $g\in G$ and every $F\in C(E,\mathbb C)$,
$$\int_E F(g\eta)\,d\rho_0(\eta)=\int_E F(\eta)\,d\rho_0(\eta).$$
Equivalently, $g_\ast\rho_0=\rho_0$ for every $g\in G$.
Hence, the Ellis flow $(E(X,G),G)$ carries a $G$-invariant regular Borel
probability measure.
\end{proof}

\subsection{Hereditary amenability for tame flows}
Tameness will be needed in order to pass
from hereditary amenability of $X$ to hereditary amenability of its finite powers. We first prove a joining lemma for minimal metrizable tame flows and then use a separable reduction to treat arbitrary compact tame flows.

\begin{lemma}\label{lem:old_claim1}
Let $H$ be a group, let
$Y_1,\ldots,Y_n$ be minimal metrizable tame $H$-flows, each carrying an
$H$-invariant regular Borel probability measure, and let
$$
V\subseteq Y_1\times\cdots\times Y_n
$$
be a minimal $H$-subflow. Then $V$ carries an $H$-invariant regular Borel
probability measure.
\end{lemma}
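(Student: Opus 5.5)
The plan is to reduce the problem to the maximal equicontinuous factors, as in the proof of Lemma~\ref{thm:ordinary-ellis-glasner-regime}, and then lift the invariant measure through a fibre map that is injective almost everywhere.

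\textbf{Setting up the factors.} For each $i\leqslant n$, let $\pi_i:Y_i\to Z_i$ be the maximal equicontinuous factor of $(Y_i,H)$. Each $Y_i$ is minimal, metrizable, tame and amenable. By \cite[Corollary~5.4(2)]{Glasner2} and \cite[Theorem~1.2]{FGJO21}, $\pi_i$ is an almost one-to-one regular extension, so
$$\lambda_{Z_i}(Z_i^0)=1,\qquad Z_i^0:=\{z\in Z_i:|\pi_i^{-1}(z)|=1\}.$$
Here $\lambda_{Z_i}$ is the unique invariant probability measure on the minimal equicontinuous flow $Z_i$. The set $Z_i^0$ is $H$-invariant, because $\pi_i$ is equivariant and each $h\in H$ acts by homeomorphisms. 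It is also a $G_\delta$ set, as noted in that proof.

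Put $\pi:=\pi_1\times\cdots\times\pi_n:\prod_iY_i\to\prod_iZ_i$ and $W:=\pi[V]$. The product $\prod_iZ_i$ is equicontinuous by the argument of Lemma~\ref{lem:finite-powers-of-tame-flows}(1). Hence $W$ is a minimal equicontinuous $H$-flow, so it is uniquely ergodic with invariant measure $\lambda_W$.

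\textbf{Singleton fibres have full measure.} The coordinate projection $p_i:W\to Z_i$ is a factor map onto the minimal flow $Z_i$. Therefore $(p_i)_*\lambda_W$ is invariant on $Z_i$, and unique ergodicity gives $(p_i)_*\lambda_W=\lambda_{Z_i}$. It follows that
$$\lambda_W(W\setminus W_0)\leqslant\sum_{i}\lambda_{Z_i}(Z_i\setminus Z_i^0)=0,\qquad W_0:=W\cap\prod_iZ_i^0.$$
Over each $w\in W_0$, the $\pi$-fibre in $\prod_iY_i$ is a single point. That point lies in $V$ because $\pi|_V:V\to W$ is onto. So $\pi|_V$ is injective over the invariant conull set $W_0$.

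\textbf{Constructing the invariant lift.} First produce some regular Borel probability measure $\rho$ on $V$ with $(\pi|_V)_*\rho=\lambda_W$. As in the Claim in the proof of Theorem~\ref{thm:ellis-flow-amenable}, extend the state $f\circ\pi|_V\mapsto\int_W f\,d\lambda_W$ from $(\pi|_V)^*C(W)$ to $C(V)$ by Krein's theorem, and apply Riesz--Markov.

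Next, show that such a lift is unique, repeating the Lusin--Souslin argument of Lemma~\ref{thm:ordinary-ellis-glasner-regime}. Any lift $\rho'$ gives full measure to $V_0:=V\cap\pi^{-1}[W_0]$. The continuous bijection $V_0\to W_0$ between Polish spaces has a Borel inverse $s$. Hence $\rho'(B)=\lambda_W(s^{-1}[B\cap V_0])$ for every Borel $B\subseteq V$, so $\rho'$ is determined by $\lambda_W$.

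Finally, for $h\in H$, the measure $h_*\rho$ is again a lift of $h_*\lambda_W=\lambda_W$, by equivariance of $\pi$. Uniqueness then gives $h_*\rho=\rho$, so $\rho$ is the required invariant measure. Regularity is automatic, since $V$ is compact metrizable.

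\textbf{Main obstacle.} The delicate step is the full-measure statement for $W_0$. It needs two inputs: regularity of each extension $\pi_i$, which is the deep input from \cite{FGJO21}, and the identification of the marginals of $\lambda_W$ with the measures $\lambda_{Z_i}$. That identification relies on unique ergodicity of the minimal equicontinuous flows $Z_i$. Tameness enters only through the structure theorem for each $Y_i$. The product $\prod_iY_i$ itself need not be an almost one-to-one extension of a minimal flow, which is why the argument works over $W$ rather than $\prod_iZ_i$.
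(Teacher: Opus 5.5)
Your proposal is correct and is essentially the paper's own proof. You pass to the maximal equicontinuous factors (almost one-to-one and regular by Glasner and Fuhrmann--Glasner--J\"ager--Oertel), note that the marginals of the unique invariant measure on the minimal equicontinuous flow $\pi[V]$ are the unique invariant measures on the $Z_i$ so that singleton fibres are conull, and then get invariance of the lift from its uniqueness (existence via Krein and Riesz--Markov, uniqueness via Lusin--Souslin), exactly as the paper does.
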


\begin{proof}
For each $i$, let
$$\pi_i:Y_i\longrightarrow Z_i$$
be the maximal equicontinuous factor. 
As in the proof of Lemma~\ref{thm:ordinary-ellis-glasner-regime},
by \cite[Corollary~5.4(2)]{Glasner2} and \cite[Theorem~1.2]{FGJO21}, if $\mathfrak m_i$ denotes
the unique invariant probability measure on $Z_i$, then
$$\mathfrak m_i(Z_i^0)=1, \qquad Z_i^0:=\left\{z\in Z_i:
|\pi_i^{-1}(z)|=1\right\}.$$
Define
$$\pi_V:V\longrightarrow Z_1\times\cdots\times Z_n$$
as $\pi_V:=\pi_1\times\ldots\times\pi_n$, and put $Z:=\pi_V[V]$.
By Lemma~\ref{lem:finite-powers-of-tame-flows}(1), the product
$Z_1\times\cdots\times Z_n$ is equicontinuous. Hence, its subflow $Z$
is equicontinuous. Since $Z$ is minimal, it has a unique invariant
probability measure $\mathfrak m_Z$.

For each $i$, the image of the coordinate projection
$V\longrightarrow Y_i$
is a nonempty closed $H$-invariant subset of the minimal flow $Y_i$,
thus equal to $Y_i$.
It follows that the induced projection $p_i:Z\longrightarrow Z_i$
is also onto. Therefore,
$$(p_i)_*\mathfrak m_Z=\mathfrak m_i.$$
Consequently, $\mathfrak m_Z(Z^0)=1$ for
$$Z^0:=Z\cap\bigl(Z_1^0\times\cdots\times Z_n^0\bigr)
=\bigcap_{i=1}^n p_i^{-1}[Z_i^0].$$

Now, if $z=(z_1,\ldots,z_n)\in Z^0$,
then
$$
\pi_V^{-1}(z)
\subseteq
\pi_1^{-1}(z_1)\times\cdots\times\pi_n^{-1}(z_n).
$$
The set on the right is a singleton. Since $z\in\pi_V[V]$, the fibre
$\pi_V^{-1}(z)$ is nonempty and hence is itself a singleton.

A probability lift of $\mathfrak m_Z$ to $V$ exists by applying the Krein extension theorem to extend the state 
$$f \circ \pi_V \mapsto \int_Z f\, d\mathfrak m_Z$$ 
from the unital $C^*$-subalgebra $\pi_V^* C(Z)$ to a state on $C(V)$, and subsequently applying the Riesz--Markov representation theorem.

This lift is unique. Indeed, each $Z_i^0$ is a $G_\delta$ subset of $Z_i$,
and hence $Z^0$ is a $G_\delta$ subset of the compact metrizable space
$Z$. Put $V^0:=\pi_V^{-1}[Z^0]$.
Then $V^0$ and $Z^0$ are Polish spaces, and
$\pi_V|_{V^0}:V^0\longrightarrow Z^0$
is a continuous bijection. By the Lusin--Souslin theorem, its inverse is
Borel. Every probability measure on $V$ which projects to
$\mathfrak m_Z$ is concentrated on $V^0$, and is therefore the pushforward
of $\mathfrak m_Z|_{Z^0}$ under this inverse. Thus, the lift is unique.

For every $h\in H$, the translate of this lift is again a lift of
$\mathfrak m_Z$. By uniqueness, the lift is $H$-invariant. 
\end{proof}

\begin{theorem}
\label{thm:hereditary-amenability-finite-powers}
Let $(X,G)$ be tame. If $X$ is hereditarily amenable, then
$X$ is finitely power-hereditarily amenable.
\end{theorem}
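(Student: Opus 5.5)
The plan is to reduce to minimal subflows of $X^n$ and then, via a separable reduction, to the metrizable setting of Lemma~\ref{lem:old_claim1}. Fix $n\geqslant1$ and a subflow $W\subseteq X^n$. It suffices to find an invariant measure on a minimal subflow $V\subseteq W$.

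For each $i$, the coordinate projection $p_i[V]$ is a minimal subflow $Y_i\subseteq X$. It is amenable because $X$ is hereditarily amenable, and tame because it is a subflow of $X$. So $V$ is a minimal subflow of $Y_1\times\cdots\times Y_n$, with each $Y_i$ minimal, tame and amenable. If $X$ is metrizable, Lemma~\ref{lem:old_claim1} (with $H=G$) finishes the proof directly.

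In general we argue by compactness in $\Prob(V)$, as in the proof of Theorem~\ref{thm:ellis-flow-amenable}. It suffices to show that for finitely many $g_1,\dots,g_r\in G$ and $F_1,\dots,F_r\in C(V)$ there is $\rho\in\Prob(V)$ with $\int F_j\circ g_j\,d\rho=\int F_j\,d\rho$. By Stone--Weierstrass, and up to an $\varepsilon$ as before, we may assume each $F_j$ is a polynomial in functions $f\circ p_i$ with $f\in C(Y_i)$.

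Run the countable-subgroup construction from the proof of Theorem~\ref{thm:nonmetric-unique-ergodicity} simultaneously for all the relevant data:
\begin{itemize}
\item separable $G_0$-invariant algebras $A^{(i)}\subseteq C(Y_i)$ containing the finitely many relevant $f$'s, making $Z_i:=\operatorname{Spec}(A^{(i)})$ minimal metrizable $G_0$-flows;
\item a separable $G_0$-invariant algebra $B\subseteq C(V)$ containing all $a\circ p_i$ ($a\in A^{(i)}$), making $Z_V:=\operatorname{Spec}(B)$ minimal;
\item a countable $G_0\ni g_1,\dots,g_r$, obtained by interleaving the constructions so that $G_0$ serves all algebras at once.
\end{itemize}
The quotients are tame, being factors of tame flows restricted to a subgroup. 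Each $Z_i$ is amenable, since the pushforward of the invariant measure on $Y_i$ is $G_0$-invariant.

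Choose $B$ to be generated by the $a\circ p_i$. Then $Z_V$ embeds as a minimal $G_0$-subflow of $Z_1\times\cdots\times Z_n$, and minimality of $Z_V$ follows from this, because a factor of the minimal flow $V$ under a $G_0$-invariant generated algebra is minimal by the covering argument. Lemma~\ref{lem:old_claim1}, applied with $H=G_0$, then gives a $G_0$-invariant measure $\lambda$ on $Z_V$. Lift $\lambda$ to $\rho\in\Prob(V)$ using the Krein extension and Riesz--Markov argument from the Claim in Theorem~\ref{thm:ellis-flow-amenable}. Since each $F_j$ factors through $Z_V$ and $g_j\in G_0$, $\rho$ satisfies the finitely many required identities. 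Compactness then gives a $G$-invariant measure on $V$.

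The main obstacle is the bookkeeping in the separable reduction. $G_0$ must make each $Z_i$ and $Z_V$ simultaneously minimal, which requires interleaving the finite covering sets $F_U$ for every countable base at every stage. One must also check that minimality of $V$ under $G$ yields minimality of $Z_V$ under $G_0$, using exactly the covering argument from Theorem~\ref{thm:nonmetric-unique-ergodicity}. If this route fails, the fallback is to let $Z_V$ be the image of $V$ in $\prod Z_i$ and pass to a minimal $G_0$-subflow, lifting its measure through a minimal $G_0$-subflow of $V$.
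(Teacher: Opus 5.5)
Your proposal is correct and follows essentially the paper's route. You reduce to a minimal $V$ and check the finite-intersection property in $\Prob(V)$ using a countable subgroup $G_0\ni g_1,\dots,g_r$ and separable factors, chosen by the covering argument so that the joint image of $V$ is $G_0$-minimal; you then apply Lemma~\ref{lem:old_claim1} with $H=G_0$ and lift back via Krein extension and Riesz--Markov. The only differences are cosmetic: the paper makes each $f_j$ factor exactly (Tietze plus countably many coordinate functions) rather than approximating up to $\varepsilon$, and it gets minimality of the coordinate factors from minimality of the joint image instead of arranging it separately.
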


\begin{proof}
Fix $n\geqslant 1$, and let $W\subseteq X^n$ be a nonempty subflow. It is enough to
construct an invariant probability measure on a minimal subflow of $W$.
Replacing $W$ by such a subflow, we assume that $W$ is minimal.

For $1\leqslant i\leqslant n$, put
$$
Y_i:=\operatorname{pr}_i[W].
$$
Each $Y_i$ is a minimal subflow of $X$. By hereditary amenability, choose a
$G$-invariant regular Borel probability measure $\mu_i$ on $Y_i$.
For $g\in G$ and $f\in C(W,\mathbb C)$, we consider
$$C(g,f):=\left\{\nu\in\Prob(W):\int_W f(gw)\,d\nu(w)=
\int_W f(w)\,d\nu(w)\right\}.$$
Each $C(g,f)$ is weak-$\ast$ closed, and
$$\Prob_G(W)=\bigcap_{\substack{g\in G\\ f\in C(W,\mathbb C)}}C(g,f).$$
We verify that the above family has the finite-intersection property.

Fix $g_1,\ldots,g_r\in G$ and $f_1,\ldots,f_r\in C(W,\mathbb{C})$
and let $H_0\leqslant G$ be the subgroup generated by
$g_1,\ldots,g_r$. 
Since $W$ is a closed subset of the compact Hausdorff, and hence normal, space
$$Y_1\times\cdots\times Y_n,$$
the Tietze extension theorem, applied separately to the real and imaginary
parts, gives, for every $j\leqslant r$, an extension
$\widetilde f_j\in C(Y_1\times\cdots\times Y_n,\mathbb C)$ of $f_j$.

By the complex Stone--Weierstrass theorem, each $\widetilde f_j$ is a
uniform limit of finite sums of products of coordinate functions. Choose a
sequence of such approximations for every $j$, and, for
$1\leqslant i\leqslant n$, let $\mathcal D_i\subseteq C(Y_i,\mathbb C)$
be the countable family of all $i$-th coordinate functions which occur in these approximations.

We now construct recursively an increasing sequence of countable subgroups
$$H_0\leqslant H_1\leqslant H_2\leqslant\cdots\leqslant G$$
and, simultaneously for every $1\leqslant i\leqslant n$, an increasing
sequence of separable unital $H_m$-invariant $C^*$-subalgebras
$$A_{i,m}\subseteq C(Y_i,\mathbb C).$$
The action on functions is given by $(h\cdot a)(y):=a(h^{-1}y)$.
We already have $H_0$.
For each $i$, let $A_{i,0}$ be the unital $C^*$-algebra generated by
$\{h\cdot a:h\in H_0,\ a\in\mathcal D_i\}$.
Now, assume that $H_m$ and all the algebras $A_{i,m}$ have been constructed.
Let
$$q_{i,m}:Y_i\longrightarrow Y_{i,m}:=\operatorname{Spec}(A_{i,m})$$
be the associated factor maps of $H_m$-flows, with metrizable targets $Y_{i,m}$,
and put
$$q_m:=(q_{1,m},\ldots,q_{n,m})|_W,\qquad W_m:=q_m[W].$$
Choose a countable basis $\mathcal B_m$ consisting of nonempty open
subsets of $W_m$. For $B\in\mathcal B_m$, the set $U_B:=q_m^{-1}[B]$
is a nonempty open subset of the minimal $G$-flow $W$,
and so the family
$\{gU_B:g\in G\}$ covers $W$. By compactness, choose a finite set $F_B\subseteq G$ such that
$$W=\bigcup_{g\in F_B}gU_B.$$
Let $H_{m+1}$ be the subgroup generated by
$$H_m\cup\bigcup_{B\in\mathcal B_m}F_B.$$
This group is countable. For every $i$, let $A_{i,m+1}$ be the unital
$C^*$-algebra generated by all $H_{m+1}$-translates of $A_{i,m}$. Then
$A_{i,m+1}$ is separable and $H_{m+1}$-invariant. This completes the
recursive construction.

Now, we introduce
$$H:=\bigcup_{m<\omega}H_m$$
and
$$A_i:=\overline{\bigcup_{m<\omega}A_{i,m}}.$$
Then $H$ is countable, and each $A_i$ is a separable unital
$H$-invariant $C^*$-subalgebra of $C(Y_i,\mathbb C)$. Let
$$q_i:Y_i\longrightarrow Y_i':=\operatorname{Spec}(A_i)$$
be the associated factor map of $H$-flows, with metrizable target $Y_i'$, and put
$$q:=(q_1,\ldots,q_n)|_W,\qquad W':=q[W].$$

\begin{clm}
Every $f_j$ factors through $q$, i.e. $f_j = \hat{f}_j \circ q$ for a unique function $\hat{f}_j \in C(W',\mathbb{C})$.
\end{clm}

\begin{clmproof}
Suppose that $q(w)=q(w')$.
Then, for every $i$ and every $a\in A_i$, the function $a$ has the same
value on the $i$-th coordinates of $w$ and $w'$. In particular, this holds
for every member of $\mathcal D_i$. Thus, all the chosen finite sums of
products of coordinate functions have the same value at $w$ and $w'$.
Passing to the uniform limit gives $f_j(w)=f_j(w')$.
Since $q:W\to W'$ is a continuous surjection from a compact space onto a Hausdorff space, it is a quotient map. Hence, there is a unique function $\widehat f_j\in C(W',\mathbb C)$ such that $f_j=\widehat f_j\circ q$.
\end{clmproof}

\begin{clm}
$W'$ is minimal as an $H$-flow.
\end{clm}

\begin{clmproof}
    
By Gelfand duality, the inclusions $A_{i,m}\subseteq A_i$
induce factor maps $r_{i,m}:Y_i'\longrightarrow Y_{i,m}$
and hence continuous maps $r_m:W'\longrightarrow W_m$
such that
$$
q_m=r_m\circ q.
$$

The algebra
$$
\bigcup_{m<\omega}r_m^*C(W_m,\mathbb C)
$$
is uniformly dense in $C(W',\mathbb C)$. Indeed, the coordinate
functions coming from the algebras $A_i$ separate points of $W'$, so
the self-adjoint unital algebra generated by them is dense in
$C(W',\mathbb C)$ by the complex Stone--Weierstrass theorem. Every
member of $A_i$ is a uniform limit of members of the algebras
$A_{i,m}$. By the isometry of the Gelfand transform, such approximation
in $A_i$ yields uniform approximation of the corresponding coordinate
functions on $Y_i'$. 
Finally, it is easy to see that the
coordinate functions coming from the algebras $A_{i,m}$ belong to
$$\bigcup_{m<\omega}r_m^*C(W_m,\mathbb C).$$
It follows that the sets
$r_m^{-1}[B]$, with $m<\omega$ and $B\in\mathcal{B}_m$,
form a basis of the topology of $W'$. 
Explicitly, let
$z\in O\subseteq W'$,
where $O$ is open. Choose
$$\phi\in C(W',[0,1])$$
such that
$\phi(z)=1$ and $\phi=0\text{ on }W'\setminus O$.
By the preceding density, choose $m$ and
$$\chi_m\in C(W_m,\mathbb C)$$
such that
$\left\|\phi-\chi_m\circ r_m\right\|_\infty<\frac13$.
Let $\psi_m:=\operatorname{Re}(\chi_m)\in C(W_m,\mathbb R)$.
Since $\phi$ is real-valued,
$$\left\|\phi-\psi_m\circ r_m\right\|_\infty<\frac13.$$
Therefore,
$$z\in r_m^{-1}\left[\left\{y\in W_m:\psi_m(y)>\frac23\right\}\right]
\subseteq O.$$
Choosing a basis element $B\in\mathcal B_m$ contained 
in the open set $\{y \in W_m: \psi_m(y) >\frac{2}{3}\}$
and containing $r_m(z)$, we get $z\in r_m^{-1}[B]\subseteq O$.

For such $B\in\mathcal{B}_m$, we already fixed a finite set
$F_B\subseteq H$
such that
$$W=\bigcup_{g\in F_B}gq_m^{-1}[B].$$
Since $q$ is $H$-equivariant and $q_m=r_m\circ q$, applying $q$ yields
$$W'=\bigcup_{g\in F_B}g\,r_m^{-1}[B]\subseteq\bigcup_{g\in F_B}gO.$$
Thus, finitely many $H$-translates of every nonempty open subset of $W'$
cover $W'$. Hence, every $H$-orbit in $W'$ is dense, and $W'$ is minimal.
\end{clmproof}

For each $i$, the coordinate map
$$\operatorname{pr}_i|_{W'}:W'\longrightarrow Y_i'$$
is continuous and $H$-equivariant. 
It is also onto as $W'=q[W] = (q_1,\dots,q_n)[W]$ and we have $\pr_i[W']=q_i[\pr_i[W]]= q_i[Y_i]=Y_i'$. 
Therefore, by Claim 2, $Y_i'$ is an
$H$-factor of the minimal flow $W'$, and hence $Y_i'$ is minimal.

Since $Y_i$ is a subflow of the tame $G$-flow $X$, the restricted $H$-flow
$Y_i$ is tame. Tameness passes to factors, so $Y_i'$ is tame. Moreover,
$$\mu_i':=(q_i)_*\mu_i$$
is an $H$-invariant regular Borel probability measure on $Y_i'$.
Lemma \ref{lem:old_claim1} applied to $W'\subseteq Y_1'\times\cdots\times Y_n'$,
gives an $H$-invariant regular Borel probability measure
$\lambda$ on $W'$.

Define a state on the unital $C^*$-subalgebra $q^*C(W',\mathbb C)\subseteq C(W,\mathbb C)$ by
$$L(\varphi\circ q):=\int_{W'}\varphi\,d\lambda.$$
Extend $L$ to a state on $C(W,\mathbb C)$, and let
$\nu$ be its Riesz measure on $W$. Then $q_*\nu=\lambda$.

Using the factorization $f_j=\widehat f_j\circ q$ from Claim 1,
the $H$-equivariance of $q$, the inclusion $g_j\in H$, and the
$H$-invariance of $\lambda$, we obtain
\begin{IEEEeqnarray*}{rCcCl}
\int_W f_j(g_jw)\,d\nu(w) &=& \int_{W'}\widehat f_j(g_jz)\,d\lambda(z)
&=& \int_{W'}\widehat f_j(z)\,d\lambda(z)\\
&=& \int_W f_j(w)\,d\nu(w). & &
\end{IEEEeqnarray*}
Thus, $\nu\in C(g_1,f_1)\cap\cdots\cap C(g_r,f_r)$,
and so we have proved that the family
$$
\left\{
C(g,f):
g\in G,\ f\in C(W,\mathbb C)
\right\}
$$
has nonempty finite intersections. 
By compactness, 
$$
\Prob_G(W)
=
\bigcap_{\substack{g\in G\\ f\in C(W,\mathbb C)}}
C(g,f)
\neq\varnothing.
$$
Thus, $W$ carries a $G$-invariant regular Borel probability measure.
\end{proof}

\begin{cor}
\label{cor:ellis-amenability-characterization}
Let $(X,G)$ be tame. The following conditions are equivalent:
\begin{enumerate}[(i)]
\item $\bigl(E(X,G),G\bigr)$ is hereditarily amenable;

\item $\bigl(E(X,G),G\bigr)$ is amenable;

\item $X$ is hereditarily amenable.
\end{enumerate}
\end{cor}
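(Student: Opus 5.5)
The plan is to prove the cycle of implications (i)$\Rightarrow$(ii)$\Rightarrow$(iii)$\Rightarrow$(ii)$\Rightarrow$(i), assembling results already established. The implication (i)$\Rightarrow$(ii) is immediate, because $E(X,G)$ is itself a nonempty subflow of $E(X,G)$. The implication (ii)$\Rightarrow$(i) is exactly Remark~\ref{rem:ellis-amenability-is-hereditary}: an invariant measure $\nu$ on $E$ is pushed forward along the continuous $G$-equivariant right translation $R_q$, for any $q$ in a given subflow $Y\subseteq E$, onto $\overline{Gq}\subseteq Y$.

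For (ii)$\Rightarrow$(iii), we will invoke the forward direction of Theorem~\ref{thm:ellis-flow-amenable}. Amenability of $(E(X,G),G)$ gives that $X$ is finitely power-hereditarily amenable. Specializing to $n=1$ then says that every subflow of $X$ carries an invariant regular Borel probability measure, which is hereditary amenability of $X$. Concretely, for a subflow $W\subseteq X$ one picks a minimal $V\subseteq W$ and $x\in V$, and pushes an invariant measure on $E$ forward along $\ev_x$. No tameness is used in this direction.

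For (iii)$\Rightarrow$(ii), tameness enters through Theorem~\ref{thm:hereditary-amenability-finite-powers}. Hereditary amenability of the tame flow $X$ gives finite power-hereditary amenability, and the reverse direction of Theorem~\ref{thm:ellis-flow-amenable} then yields an invariant measure on $E(X,G)$. This closes the cycle.

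There is no genuine obstacle here, since all the substantive work is contained in Theorems~\ref{thm:ellis-flow-amenable} and \ref{thm:hereditary-amenability-finite-powers}; the only point to check is that the $n=1$ case of finite power-hereditary amenability is literally hereditary amenability of $X$, which holds by Definition~\ref{def:her.amenability}. I would also remark that tameness is used only in (iii)$\Rightarrow$(ii), consistent with the non-tame counterexample mentioned for Appendix~\ref{sec:A3}.
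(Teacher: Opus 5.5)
Your proposal is correct and matches the paper's proof: (i)$\Leftrightarrow$(ii) is Remark~\ref{rem:ellis-amenability-is-hereditary}, and (ii)$\Rightarrow$(iii) is the forward direction of Theorem~\ref{thm:ellis-flow-amenable} specialized to $n=1$. The implication (iii)$\Rightarrow$(ii) combines Theorem~\ref{thm:hereditary-amenability-finite-powers} with the reverse direction of Theorem~\ref{thm:ellis-flow-amenable}, and tameness is used only there.
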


\begin{proof}
Equivalence between \textup{(i)} and \textup{(ii)} is the content of Remark \ref{rem:ellis-amenability-is-hereditary}.

Assume \textup{(ii)}. By Theorem~\ref{thm:ellis-flow-amenable}, the flow
$X$ is finitely power-hereditarily amenable, and hence is hereditarily
amenable. Thus, \textup{(iii)} holds.

Conversely, \textup{(iii)} implies \textup{(ii)} by
Theorem~\ref{thm:hereditary-amenability-finite-powers}
combined with Theorem~\ref{thm:ellis-flow-amenable}.
\end{proof}

The tameness assumption in
Corollary~\ref{cor:ellis-amenability-characterization} cannot be
omitted: Appendix~\ref{sec:A3} constructs a metrizable
hereditarily amenable flow whose Ellis flow is not amenable.

\subsection{Invariance of Haar decompressions}\label{subsec:Haar.invariance}
The preceding results guarantee the existence of some invariant measure on the Ellis flow. We now prove the stronger statement that the canonical Haar decompression associated with each Ellis group is invariant.
In the first version of this paper, we obtained the result under an additional hypothesis that the flow $(X,G)$ is an inverse limit of metrizable $G$-flows.
Then, during our work in a subsequent project \cite{HoffKru2},
we were able to use a conclusion from \emph{Generalized Newelski Conjecture} (GNC) established there to prove results of this subsection without any metrizability related hypothesis. 
Finally, we obtained arguments not requiring theorems from \cite{HoffKru2} to prove what follows below.  
On the other hand, Lemma~\ref{lemma:K.closure}(1) will be used in \cite{HoffKru2} to give a short proof of GNC.

Now, let $(X,G)$ be a flow, $\CM\unlhd_m E(X,G)$, $u^2=u\in\CM$ and set $K:=u\CM$.

\begin{lemma}\label{lemma:K.closure}
    Assume that $(X,G)$ is tame and $(\CM,G)$ is amenable. 
    Then   
    \begin{enumerate}
    \item $\overline{K}=\CM$,

    \item $\zeta:\CM\to K$, given by $\zeta(\eta)=u\eta$, is continuous with respect to the $\tau$-topology on $K$. 
    \end{enumerate}
\end{lemma}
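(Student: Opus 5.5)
The plan is to get (1) from the unique invariant measure on $\CM$ together with Lemma~\ref{lemma:tameness.dominated.convergence}. Statement (2) should then follow from (1) and the continuity result already used in the proof of Lemma~\ref{lem:Fj.fragmented}.

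\emph{Setup.} By Lemma~\ref{lem:finite-powers-of-tame-flows}(3), $(E,G)$ is tame, so its subflow $(\CM,G)$ is tame. It is also minimal and, by hypothesis, amenable. Theorem~\ref{thm:nonmetric-unique-ergodicity} then gives a unique $\mu\in\Prob_G(\CM)$. Its support is a nonempty closed $G$-invariant subset of the minimal flow $\CM$, so $\supp(\mu)=\CM$.

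Let $\lambda_u:\CM\to\CM$ be $\lambda_u(\eta)=u\eta$. I claim that $\lambda_u\in E(\CM,G)$. Choose a net $g_i\to u$ in $E$. For every $\eta\in\CM$ and $x\in X$ we have $(g_i\eta)(x)=g_i(\eta(x))\to u(\eta(x))$. Hence $g_i\eta\to u\eta$ pointwise on $\CM$, so $\lambda_u$ is the pointwise limit of the maps $(g_i)_{\CM}$.

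Next, $\lambda_u^\#$ from Fact~\ref{fact:tame-injective} fixes $\mu$. Indeed, $\lambda_u^\#$ is a pointwise limit in $\Prob(\CM)^{\Prob(\CM)}$ of maps $g\mapsto g_*$, and each of these fixes $\mu$. This is the step that needs care: it requires identifying $\lambda_u^\#$ with the limit of $(g_i)_*$ along a subnet. That identification holds because $\operatorname{res}$ is a homeomorphism sending $g$ to $g$, and $\operatorname{res}^{-1}$ is therefore continuous.

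\emph{Proof of (1).} Let $f\in C(\CM)_+$ vanish on $\overline{K}$. Lemma~\ref{lemma:tameness.dominated.convergence}, applied to the tame flow $\CM$, gives
$$\int_\CM f\,d\mu=\int_\CM f\,d\lambda_u^\#(\mu)=\int_\CM f\circ\lambda_u\,d\mu.$$
For every $\eta\in\CM$ we have $u\eta\in u\CM=K$, so $f\circ\lambda_u\equiv0$. Hence $\int f\,d\mu=0$ for every nonnegative continuous $f$ vanishing on $\overline K$. By Urysohn's lemma and regularity this forces $\supp(\mu)\subseteq\overline K$, exactly as in Remark~\ref{remark: support of the compression}(1). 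Therefore $\CM=\supp(\mu)\subseteq\overline K\subseteq\CM$, which gives $\overline K=\CM$.

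\emph{Proof of (2).} By Lemma~\ref{fact: RNC}, $(K,\tau)$ is a compact Hausdorff group. By Lemma~3.1 of \cite{KruPiRze}, already used in Lemma~\ref{lem:Fj.fragmented}, the map $\lambda_u|_{\overline K}:\overline K\to(K,\tau)$ is continuous. By (1), $\overline K=\CM$, so this map is $\zeta$, and $\zeta$ is continuous.

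The main obstacle I expect is the measure-theoretic bridge in (1). Since $\CM$ need not be metrizable, $\lambda_u$ need not be Borel, so one cannot simply push $\mu$ forward along $\lambda_u$. Lemma~\ref{lemma:tameness.dominated.convergence} is exactly what replaces that pushforward, and the only delicate point is checking that $\lambda_u^\#(\mu)=\mu$, as discussed in the setup.
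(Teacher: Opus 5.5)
Your proposal is correct and is essentially the paper's proof. Take $\mu\in\Prob_G(\CM)$, use minimality to get full support, apply Lemma~\ref{lemma:tameness.dominated.convergence} with $u^\#\mu=\mu$ to obtain $\int f\,d\mu=\int f(um)\,d\mu(m)=0$ for every $f$ vanishing on $\overline K$, and then deduce (2) from \cite[Lemma~3.1]{KruPiRze}. The appeal to unique ergodicity is unnecessary, since any invariant measure on the minimal flow $\CM$ already has full support; your check that $\lambda_u\in E(\CM,G)$ and $\lambda_u^\#\mu=\mu$ makes explicit a step the paper leaves implicit.
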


\begin{proof}
Let $\mu\in\Prob_G(\CM)$. Suppose that $m_0\in\CM\setminus\overline{K}$. There exists $f\in C(\CM, [0,1])$ such that
$f(m_0)=1$ and $f|_{\overline{K}}\equiv 0$. 
In particular, $f(um)=0$ for every $m\in\CM$.
On one hand, we have that $\int_{\CM}f\,d\mu>0$. On the other hand, 
by $G$-invariance of $\mu$ and Lemma \ref{lemma:tameness.dominated.convergence},
we see that
$$0<\int_\CM f(m)\,d\mu(m)= \int_\CM f(m)\,d(u^\#\mu)(m) = \int_{\CM} f(um)\,d\mu(m)=0,$$
a contradiction. Hence (1). Then we obtain (2) by \cite[Lemma 3.1]{KruPiRze}.
\end{proof}

\begin{remark}\label{rem:GPP-failure-of-density}
The amenability assumption in Lemma~\ref{lemma:K.closure} is essential.
For the tame flow studied in
\cite{GismatullinPenazziPillay2015Some}, the corresponding minimal left
ideal $\CM$ and Ellis group $K=u\CM$ satisfy $\overline{K}\subsetneq\CM$.
Consequently, $(\CM,G)$ is not amenable, since otherwise
Lemma~\ref{lemma:K.closure} would give $\overline{K}=\CM$. By
Theorem~\ref{thm:haar-decompression-amenability} below, the associated
Haar decompression $\mu_K$ is therefore not $G$-invariant.
\end{remark}

\begin{theorem}\label{thm:haar-decompression-amenability}
Assume that $(X,G)$ is tame. The following are equivalent:
\begin{enumerate}
    \item The Haar decompression $\mu_K$ is $G$-invariant,

    \item $(\CM,G)$ is amenable,

\end{enumerate}
In particular, if $(\CM,G)$ is amenable, then $\mu_K$ is the unique element of $\Prob_G(\CM)$.
\end{theorem}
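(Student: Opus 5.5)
The implication (1)$\Rightarrow$(2) is immediate. By Remark~\ref{remark: support of the compression}(1), $\supp(\mu_K)\subseteq\overline{K}\subseteq\CM$, so a $G$-invariant $\mu_K$ restricts to a $G$-invariant regular Borel probability measure on $\CM$.

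For (2)$\Rightarrow$(1), fix an arbitrary $\mu\in\Prob_G(\CM)$. I will show that $\mu=\mu_K$ as measures on $E$. This gives invariance of $\mu_K$, and since $\mu$ was arbitrary, also the uniqueness statement. The flow $(\CM,G)$ is minimal and tame by Lemma~\ref{lem:finite-powers-of-tame-flows}(3), and it is amenable by assumption. Hence Theorem~\ref{thm:nonmetric-unique-ergodicity} makes $\mu$ the unique element of $\Prob_G(\CM)$. By Lemma~\ref{lemma:K.closure}(2), the map $\zeta:\CM\to(K,\tau)$, $\zeta(\eta)=u\eta$, is continuous. So $\lambda:=\zeta_*\mu$ is a regular Borel probability measure on the compact Hausdorff group $(K,\tau)$, being the pushforward of a Radon measure under a continuous map between compact Hausdorff spaces.

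\emph{Step 1: $\lambda=\mathfrak h_K$.} For $k\in K$, right multiplication $R_k:\CM\to\CM$, $\eta\mapsto\eta k$, is continuous and $G$-equivariant. Therefore $(R_k)_*\mu\in\Prob_G(\CM)$, and unique ergodicity gives $(R_k)_*\mu=\mu$. Since $\zeta(\eta k)=u\eta k=\zeta(\eta)k$, we get $\zeta\circ R_k=\rho_k\circ\zeta$, where $\rho_k$ is right translation on $K$. Consequently $(\rho_k)_*\lambda=\lambda$ for all $k\in K$. On a compact Hausdorff group a right-invariant regular probability measure is the normalized Haar measure, so $\lambda=\mathfrak h_K$.

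\emph{Step 2: $\int_E F\,d\mu=\int_E F(u\eta)\,d\mu(\eta)$ for $F\in C(E)$.} This is the step I expect to be the crux. I would apply Lemma~\ref{lemma:tameness.dominated.convergence} to the tame flow $Y:=\CM$, with $F|_\CM$ and the element of $E(\CM,G)$ given by left multiplication by $u$; this element is the image of $u$ under the restriction homomorphism, namely the limit of the same net $g_i\to u$. Since $\mu$ is $G$-invariant, it is fixed by every element of $E(\Prob(\CM),G)$, so $u^\#\mu=\mu$. The lemma then yields
\[
\int_\CM F(u\eta)\,d\mu(\eta)=\int_\CM F\,d(u^\#\mu)=\int_\CM F\,d\mu.
\]
This is exactly the computation already carried out in the proof of Lemma~\ref{lemma:K.closure}.

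\emph{Step 3: change of variables.} We have $F(u\eta)=(F\circ j)(\zeta(\eta))$. By Lemma~\ref{lem:Fj.fragmented}, $F\circ j$ is fragmented on $(K,\tau)$. Fact~\ref{fact:univ}, applied with the measure $\lambda$, gives a Borel function $f_\lambda$ and a Borel $\lambda$-null set $N$ with $F\circ j=f_\lambda$ off $N$. Then $\zeta^{-1}[N]$ is a Borel $\mu$-null set, so
\[
\int_\CM F(u\eta)\,d\mu=\int_\CM f_\lambda\circ\zeta\,d\mu=\int_K f_\lambda\,d\lambda=\int_K F(j(k))\,d\mathfrak h_K(k),
\]
where the last equality uses Step~1. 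Combining this with Step~2 gives $\int_E F\,d\mu=\int_E F\,d\mu_K$ for every $F\in C(E)$. The uniqueness clause of the Riesz--Markov theorem (Fact~\ref{fact:riesz-complex}) then gives $\mu=\mu_K$, which proves (1) and the final assertion.
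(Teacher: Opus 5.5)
Your proof is correct. The direction (1)$\Rightarrow$(2) and your Steps~2--3 are essentially the paper's argument. The paper also uses $u^\#\mu=\mu$ with Lemma~\ref{lemma:tameness.dominated.convergence} to get $\int_\CM F(u\eta)\,d\mu=\int_\CM F\,d\mu$. It likewise replaces $F\circ j$ by a Borel function off an $\mathfrak h_K$-null set in order to change variables along $\zeta$.

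The genuine difference is Step~1, the identification $\zeta_*\mu=\mathfrak h_K$. You obtain invariance of $\zeta_*\mu$ under right translation by every $k\in K$. You note that $R_k$ is a continuous $G$-map of $\CM$ with $\zeta\circ R_k=\rho_k\circ\zeta$, and then invoke unique ergodicity of $(\CM,G)$ from Theorem~\ref{thm:nonmetric-unique-ergodicity} to conclude $(R_k)_*\mu=\mu$. The paper does not use unique ergodicity here. It shows directly that $\zeta_*\mu$ is invariant under left translation by the elements $ugu$, $g\in G$, via
\[
\int_K f(ugu\,k)\,d(\zeta_*\mu)(k)=\int_\CM (f\circ\zeta)(gu\,m)\,d\mu(m)=\int_\CM (f\circ\zeta)\,d\bigl((gu)^\#\mu\bigr)=\int_K f\,d(\zeta_*\mu).
\]
It then uses that $\zeta[Gu]$ is $\tau$-dense in $K$.

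Your version is shorter and avoids the density argument. However, it makes the theorem depend on Theorem~\ref{thm:nonmetric-unique-ergodicity}, and hence on Glasner's almost one-to-one structure theorem, the regularity result of \cite{FGJO21}, and the separable reduction. The paper's version needs only Lemmas~\ref{lemma:tameness.dominated.convergence} and~\ref{lemma:K.closure}. Because it shows that \emph{every} $\mu\in\Prob_G(\CM)$ equals $\mu_K$, unique ergodicity of $(\CM,G)$ comes out as a consequence rather than being an input. This is why the paper cites Theorem~\ref{thm:nonmetric-unique-ergodicity} only as an alternative route to the uniqueness clause.
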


\begin{proof}
Assume (1). By Remark \ref{remark: support of the compression}, we have that 
$\supp(\mu_K)\subseteq\CM$. Hence (2).

Now, assume (2), and consider the function $\zeta:\CM\to K$ given by $\zeta(\eta)=u\eta$.
By Lemma \ref{lemma:K.closure} we have that 
$\overline{K}=\CM$ and that $\zeta$ is continuous with respect to the $\tau$-topology on $K$.

Let $\mu\in\Prob_G(\CM)$ and note that $\zeta_*\mu=\mathfrak{h}_K$. 
Indeed, $\CM=E(X,G)u$ and so $Gu$ is dense in $\CM$, thus,
we have that $\zeta[Gu]=\{ugu\,:\,g\in G\}$ is dense in $K$ (with respect to the $\tau$-topology). If $f\in C(K)$ then $f\circ\zeta\in C(\CM)$ and we apply Lemma \ref{lemma:tameness.dominated.convergence} and $G$-invariance of $\mu$
to show that $\zeta_*\mu$ is $\zeta[Gu]$-invariant:
\begin{IEEEeqnarray*}{rCcCl}
    \int\limits_{k\in K}f(ugu\,k)\,d(\zeta_*\mu)(k) &=& \int\limits_{m\in\CM} f(ugu\,um)\,d\mu(m) 
    &=& \int\limits_{m\in\CM} (f\circ\zeta)(gu\,m)\,d\mu(m) \\
    &=& \int\limits_{m\in\CM} (f\circ\zeta)(m)\,d\big((gu)^\#\mu\big)(m) 
    &=& \int\limits_{m\in\CM} (f\circ\zeta)(m)\,d\mu(m) \\
    &=& \int\limits_{k\in K}f(k)\,d(\zeta_*\mu)(k). & &
\end{IEEEeqnarray*}
Therefore, $\zeta_*\mu$ is a $\zeta[Gu]$-invariant
regular Borel probability measure on the compact group $K$, and as such must be equal to its Haar measure $\mathfrak{h}_K$.

Now, we will obtain $G$-invariance of $\mu_K$ by showing that $\mu_K=\mu$.
Let $f\in C(E)$. By Corollary~\ref{cor:Fj.measurable},
$f\circ j$ is measurable for the $\mathfrak h_K$-completion of the
Borel $\sigma$-algebra on $K$. Hence, there are a bounded Borel function
$\widetilde f:K\to\mathbb C$ and a Borel set $N\subseteq K$ such that
$\mathfrak h_K(N)=0$ and $\widetilde f=f\circ j$ on $K\setminus N$.
Since $\zeta_*\mu=\mathfrak h_K$, we have
$\mu(\zeta^{-1}[N])=0$, and therefore
$\widetilde f\circ\zeta=f\circ j\circ\zeta$ $\mu$-almost everywhere.
Moreover, $u^\#\mu=\mu$, since the $G$-invariant measure $\mu$ is fixed
by every element of $E(\Prob(\CM),G)$. Consequently,
using Lemma~\ref{lemma:tameness.dominated.convergence},
\begin{IEEEeqnarray*}{rCcCl}
\int_E f\,d\mu_K
&=& \int_K(f\circ j)(k)\,d\mathfrak h_K(k) &=& \int_K\widetilde f(k)\,d\mathfrak h_K(k)\\
&=& \int_K\widetilde f(k)\,d(\zeta_*\mu)(k) &=& \int_\CM\widetilde f(\zeta(m))\,d\mu(m)\\
&=& \int_\CM f(um)\,d\mu(m) &=& \int_\CM f(m)\,d(u^\#\mu)(m)\\
&=& \int_\CM f(m)\,d\mu(m). & &
\end{IEEEeqnarray*}
Thus, regarding $\mu$ as a probability measure on $E$ concentrated on
$\CM$, we obtain $\mu_K=\mu$.
This ends the proof of (1).

If $\mu'\in\Prob_G(\CM)$ then, by repeating the proof above, we obtain
that $\mu'=\mu_K=\mu$. Thus $\mu_K$ is the unique elment of $\Prob_G(\CM)$.
Alternatively, uniqueness follows from Theorem~\ref{thm:nonmetric-unique-ergodicity}.
\end{proof}

\begin{remark}\label{rem:invariant-decompression-ergodic}
\begin{enumerate}

    \item 
Under the equivalent conditions of
Theorem~\ref{thm:haar-decompression-amenability}, one has
$$\mu_K\in\Erg_G\bigl(E(X,G)\bigr)\qquad\text{and}\qquad
\supp(\mu_K)=\CM=\overline{K}.$$
Indeed, Lemma~\ref{lemma:K.closure} gives
$\overline{K}=\CM$. 
The latter thing follows from Remark~\ref{remark: support of the compression}.
Theorem~\ref{thm:haar-decompression-amenability} shows that
$\mu_K$ is the unique invariant probability measure on $\CM$. Any
convex decomposition of $\mu_K$ into invariant probabilities on
$E(X,G)$ is necessarily concentrated on $\CM$, and uniqueness on
$\CM$ therefore makes the decomposition trivial. Thus, invariance of
an individual Haar decompression already implies its ergodicity.

    \item 
More generally, for tame $(X,G)$,
Haar decompression leads to all ergodic measures on its Ellis flow:
$$\Erg_G(E)=\left\{\mu_{u\CM}:\CM\unlhd_m E,\ 
u^2=u\in\CM,\ (\CM,G)\text{ is amenable}\right\}.$$
The right-to-left inclusion follows by the first point.
Now, if we take $\mu\in\Erg_G(E(X,G))$ then 
Corollary \ref{cor:ergodic-measures-minimal-subflows} implies that
$\supp(\mu)=\CM$ for some minimal left ideal $\CM$.
Because $(\CM,G)$ is amenable, 
we may use Theorem \ref{thm:haar-decompression-amenability}
and obtain that $\mu=\mu_K$ for some Ellis group $K$ of $\CM$.

\item 
Let $(X,G)$ be a tame hereditarily amenable flow and let $K$ and $K'$ be two Ellis groups in $E(X,G)$. Then $\mu_K = \mu_{K'}$ if and only if $K$ and $K'$ are contained in the same minimal left ideal in $E(X,G)$.
This follows by the first point and the uniqueness from Theorem~\ref{thm:haar-decompression-amenability}.
\end{enumerate}
\end{remark}

\begin{cor}\label{cor:global-haar-invariance}
Let $(X,G)$ be tame. The following conditions are equivalent:
\begin{enumerate}[(i)]
\item $(X,G)$ is hereditarily amenable;
\item $(E(X,G),G)$ is amenable;
\item every Haar decompression on $E(X,G)$ is $G$-invariant;
\item some Haar decompression on $E(X,G)$ is $G$-invariant.
\end{enumerate}
\end{cor}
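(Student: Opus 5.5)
The plan is to close the cycle of implications (i)$\Rightarrow$(ii)$\Rightarrow$(iii)$\Rightarrow$(iv)$\Rightarrow$(i), relying almost entirely on results already established.

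\emph{(i)$\Leftrightarrow$(ii).} This is exactly Corollary~\ref{cor:ellis-amenability-characterization}, which combines Theorems~\ref{thm:ellis-flow-amenable} and~\ref{thm:hereditary-amenability-finite-powers}.

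\emph{(ii)$\Rightarrow$(iii).} Assume $(E(X,G),G)$ is amenable. By Remark~\ref{rem:ellis-amenability-is-hereditary} it is then hereditarily amenable. Every minimal left ideal $\CM\unlhdm E(X,G)$ is a subflow of the Ellis flow, so $(\CM,G)$ is amenable. Now let $K=u\CM$ be an arbitrary Ellis group. Theorem~\ref{thm:haar-decompression-amenability}, applied to this $\CM$ and this idempotent $u$, gives that $\mu_K$ is $G$-invariant. Since $K$ was arbitrary, every Haar decompression is $G$-invariant.

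\emph{(iii)$\Rightarrow$(iv).} This only requires that at least one Haar decompression exists. A minimal left ideal of $E(X,G)$ always exists by Zorn's lemma, and it contains an idempotent by the Ellis--Numakura lemma, so the corresponding Ellis group $K$ exists. By Lemma~\ref{fact: RNC} it is a compact Hausdorff group, so $\mu_K$ is defined by Lemma~\ref{defprop:haar-decompression}.

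\emph{(iv)$\Rightarrow$(ii).} If some $\mu_K$ is $G$-invariant, then $\mu_K\in\Prob_G(E(X,G))$, so the Ellis flow is amenable by definition. Alternatively, Theorem~\ref{thm:haar-decompression-amenability} shows that $(\CM,G)$ is amenable, and pushing that measure into $E$ gives the same conclusion.

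No step is a real obstacle: all the substantive work is already contained in Theorem~\ref{thm:haar-decompression-amenability} and Corollary~\ref{cor:ellis-amenability-characterization}. The only points needing care are passing from amenability of $E$ to amenability of each individual minimal left ideal, which Remark~\ref{rem:ellis-amenability-is-hereditary} supplies, and noting that Haar decompressions exist at all, which makes (iii)$\Rightarrow$(iv) non-vacuous.
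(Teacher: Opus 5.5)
Your proposal is correct and follows the paper's route: the paper's proof combines Corollary~\ref{cor:ellis-amenability-characterization} with Theorem~\ref{thm:haar-decompression-amenability}. As you do, it uses hereditary amenability of the Ellis flow to get amenability of each minimal left ideal $\CM$; your write-up additionally makes explicit that an Ellis group exists and that (iv)$\Rightarrow$(ii) is immediate.
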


\begin{proof}
By combining
Corollary~\ref{cor:ellis-amenability-characterization}
with Theorem~\ref{thm:haar-decompression-amenability}.
\end{proof}

\section{Ergodic Ellis transfer and realization}
The transfer and realization properties considered in this section
depend on a distinguished point.

It is useful to distinguish two assertions. 
Let $(X,x_0,G)$ be an ambit.
The \emph{ergodic Ellis-transfer property} says that the pushforward under $\ev_{x_0}$ of every Haar decompression from an Ellis group is $G$-invariant and ergodic (e.g.: Lemma \ref{thm:ordinary-ellis-glasner-regime}, Corollary \ref{cor:ergodic-haar-transfer}). The
\emph{ergodic Ellis-realization property} says that every ergodic
$G$-invariant regular Borel probability measure on $X$ is obtained from at
least one Ellis group in this way (e.g.: Corollary \ref{cor:ergodic-ellis-realization}). 
For tame ambits the decompression itself is always defined by
Lemma~\ref{defprop:haar-decompression}. 
The main result of the present section establishes the realization property in the tame setting.

\subsection{Ergodic Ellis-transfer property}
We begin with the transfer problem.
The invariance
criterion from Subsection~\ref{subsec:Haar.invariance}, together with
preservation of ergodicity under equivariant factors, yields the desired
conclusion. Let $(X,G)$ be a flow, put $E:=E(X,G)$, let
$\CM\unlhdm E$, choose an idempotent $u\in\CM$, and put $K:=u\CM$.

\begin{cor}\label{cor:ergodic-haar-transfer}
Let $(X,G)$ be tame and hereditarily amenable.
Then $\mu_K\in\Erg_G(E)$ and, for every $x\in X$, $(\ev_x)_*\mu_K\in\Erg_G(X)$.
In particular, every tame hereditarily amenable ambit has the ergodic
Ellis-transfer property.
\end{cor}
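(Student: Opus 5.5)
The plan is to derive the first assertion from the invariance criterion for individual Haar decompressions, and the second from the fact that ergodicity passes to equivariant factors.

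\emph{Step 1: $\mu_K$ is ergodic.} Since $(X,G)$ is tame and hereditarily amenable, Corollary~\ref{cor:ellis-amenability-characterization} shows that $(E(X,G),G)$ is hereditarily amenable. In particular, the subflow $(\CM,G)$ is amenable. Theorem~\ref{thm:haar-decompression-amenability} then gives $G$-invariance of $\mu_K$. Remark~\ref{rem:invariant-decompression-ergodic}(1) upgrades this to $\mu_K\in\Erg_G(E)$: $\mu_K$ is the unique invariant probability measure on $\CM$, and any convex decomposition into invariant measures on $E$ must be concentrated on $\CM$.

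\emph{Step 2: the pushforward is ergodic.} Fix $x\in X$ and put $\nu:=(\ev_x)_*\mu_K$. Since $\ev_x$ is continuous and $G$-equivariant, $\nu$ is a regular $G$-invariant probability measure on $X$. For ergodicity, I would avoid the general ``factors of ergodic measures are ergodic'' argument and use the minimal-support structure instead. By Remark~\ref{remark: support of the compression}(4), $\supp(\nu)=\CM x$, which is a minimal subflow of $X$. It is tame, being a subflow of a tame flow, and it is amenable because it carries $\nu$. Theorem~\ref{thm:nonmetric-unique-ergodicity} therefore says that $\CM x$ is uniquely ergodic. Corollary~\ref{cor:ergodic-measures-minimal-subflows}, or the same concentration argument as in Step 1, then shows that $\nu$ is the measure $\mu_{\CM x}$ and is ergodic on $X$. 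Explicitly, if $\nu=t\nu_0+(1-t)\nu_1$ with $\nu_0,\nu_1\in\Prob_G(X)$ and $0<t<1$, both $\nu_i$ are concentrated on $\CM x$, so unique ergodicity forces $\nu_0=\nu_1=\nu$.

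\emph{Step 3: the ambit statement.} For an ambit $(X,x_0,G)$ that is tame and hereditarily amenable, Step 2 applied with $x=x_0$ and an arbitrary Ellis group $K$ is exactly the ergodic Ellis-transfer property.

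No step is a genuine obstacle. The only point needing care is Step 2: one must check that the support of the pushforward is the minimal set $\CM x$, so that unique ergodicity applies. Remark~\ref{remark: support of the compression}(3),(4) supplies this directly once invariance of $\nu$ is known.
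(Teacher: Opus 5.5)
Your proof is correct. Step~1 is the paper's argument: the paper cites Corollary~\ref{cor:global-haar-invariance}, which packages exactly Corollary~\ref{cor:ellis-amenability-characterization} and Theorem~\ref{thm:haar-decompression-amenability}, and then applies Remark~\ref{rem:invariant-decompression-ergodic}.

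For the pushforward, the paper takes a different route. It uses ergodicity of $\mu_K$ to show that every $G$-invariant measurable $A\subseteq X$ satisfies $(\ev_x)_*\mu_K(A)=\mu_K(\ev_x^{-1}[A])\in\{0,1\}$. It then invokes Remark~\ref{rem:ergodicity.notnions}, which says that on tame flows this weak $0$--$1$ condition already forces extremality in $\Prob_G(X)$. That remark in turn rests on rerunning the proof of Proposition~\ref{prop:tame-ergodic-support-minimal} under the weaker hypothesis, followed by Theorem~\ref{thm:nonmetric-unique-ergodicity}; the paper only indicates that rerun rather than writing it out.

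Your Step~2 replaces all of this with three ingredients: the elementary support computation of Remark~\ref{remark: support of the compression}(4), unique ergodicity of the minimal tame amenable flow $\CM x$, and the concentration argument. This buys you several things:
\begin{enumerate}
\item it avoids both the $0$--$1$ characterization of ergodicity and the sketched Remark~\ref{rem:ergodicity.notnions};
\item it uses only invariance of $\mu_K$, not its ergodicity, so your second assertion is logically independent of the first;
\item it identifies $(\ev_x)_*\mu_K$ explicitly as the unique invariant measure of $\CM x$.
\end{enumerate}
In exchange, the paper's route follows the standard pattern that ergodicity descends along equivariant factors. At the level of this corollary it never needs to locate the support of the pushforward.
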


\begin{proof}
Put $E:=E(X,G)$. By
Corollary~\ref{cor:global-haar-invariance}, $\mu_K$ is $G$-invariant.
Remark~\ref{rem:invariant-decompression-ergodic} therefore gives $\mu_K\in\Erg_G(E)$.

Now, fix $x\in X$. Since $\ev_x$ is $G$-equivariant,
$(\ev_x)_*\mu_K$ is $G$-invariant. Moreover, the inverse image under
$\ev_x$ of every $G$-invariant measurable subset of $X$ is a
$G$-invariant measurable subset of $E(X,G)$. Since $\mu_K$ is
ergodic, such an inverse image has $\mu_K$-measure either $0$ or $1$ (by  \cite[Proposition~12.4]{Phelps2001}).
Hence, the $(\ev_x)_*\mu_K$-measure of the original subset of $X$
is either $0$ or $1$ as well, and so $(\ev_x)_*\mu_K$ is ergodic
by Remark~\ref{rem:ergodicity.notnions}.
\end{proof}

\subsection{Ergodic Ellis-realization property}
We now address the complementary realization problem.

\begin{lemma}
\label{lem:ellis-transfer-minimal-support}
Let $(X,G)$ be a tame flow, fix $x_0\in X$, and let $\mu\in \Erg_G(X)$. Assume that
$Y:=\supp(\mu)$ is contained in $\overline{Gx_0}$.
Put $I_Y:=\ev_{x_0}^{-1}[Y]\subseteq E(X,G)$.
Then, for every choice of
$$
\CM\unlhd_m E(X,G),
\qquad
\CM\subseteq I_Y,
\qquad
u^2=u\in\CM,
$$
and $K:=u\CM$, one has $(\ev_{x_0})_*\mu_K=\mu$.
\end{lemma}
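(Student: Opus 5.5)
The plan is to reduce the statement to the unique ergodicity of $Y$ and then identify $(\ev_{x_0})_*\mu_K$ as an invariant measure on $Y$.

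By Proposition~\ref{prop:tame-ergodic-support-minimal}, $Y$ is a minimal subflow of $X$. It carries $\mu$, so it is amenable, and it is tame as a subflow of a tame flow. Theorem~\ref{thm:nonmetric-unique-ergodicity} then says that $\mu$ is the unique element of $\Prob_G(Y)$. So it suffices to show two things: $(\ev_{x_0})_*\mu_K$ is concentrated on $Y$, and it is $G$-invariant.

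Concentration is easy. Remark~\ref{remark: support of the compression}(3) gives $\supp((\ev_{x_0})_*\mu_K)\subseteq\CM x_0$. Since $\CM\subseteq I_Y$, we have $\CM x_0\subseteq Y$, and in fact $\CM x_0=Y$ by minimality of $Y$. Note also that $I_Y$ is a closed left ideal: $\ev_{x_0}$ is continuous and $G$-equivariant, so $I_Y$ is closed and $G$-invariant, hence $EI_Y\subseteq I_Y$. A minimal left ideal inside it exists, although existence is not needed here since $\CM$ is given.

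Invariance is the main obstacle, because $(\CM,G)$ need not be amenable and so Theorem~\ref{thm:haar-decompression-amenability} is not available. I would avoid invariance on $E$ entirely and work directly on $Y$, repeating the argument of Theorem~\ref{thm:haar-decompression-amenability} with $\mu$ pulled back along $\ev$ rather than with a measure on $\CM$. Consider the restriction map
\[
\Phi:E(X,G)\to E(Y,G).
\]
By Fact~\ref{fact:ellis-group-functoriality}, $\Phi[\CM]=\mathcal N$ is a minimal left ideal of $E(Y,G)$ and $\Phi|_K:K\to L:=v\mathcal N$ pushes $\mathfrak h_K$ to $\mathfrak h_L$. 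Two things then need to be checked.

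First, $\Phi$ pushes $\mu_K$ to $\mu_L$. The reason is that continuous functions of the form $F\circ\Phi$ with $F\in C(E(Y,G))$ integrate the same way on both sides, by the pushforward of Haar measure together with measurability from Corollary~\ref{cor:Fj.measurable}.

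Second, $\ev_{x_0}$ factors through $\Phi$ on $I_Y$. Indeed $\eta(x_0)=\Phi(\eta)(x_0)$ for $\eta\in\CM$, since $x_0$ itself need not lie in $Y$. To handle this, pick $y_0:=u x_0\in Y$. Every $k\in K$ satisfies $k=ku$, so
\[
k(x_0)=k(y_0)=\Phi(k)(y_0),
\]
and hence $(\ev_{x_0})_*\mu_K=(\ev_{y_0})_*\mu_L$, computed via integrals of continuous functions.

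This reduces everything to the minimal tame amenable flow $Y$ with base point $y_0$. Since $Y$ is hereditarily amenable (Corollary~\ref{cor:ergodic-measures-minimal-subflows} applies because $Y$ is minimal and amenable), Corollary~\ref{cor:ergodic-haar-transfer} gives that $(\ev_{y_0})_*\mu_L$ is ergodic, in particular invariant. Uniqueness on $Y$ then yields $(\ev_{x_0})_*\mu_K=\mu$.
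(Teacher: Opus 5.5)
Your proposal is correct and is essentially the paper's proof: restrict to $E(Y,G)$ via Fact~\ref{fact:ellis-group-functoriality}, use $k=ku$ to rewrite $k(x_0)=\Phi(k)(y_0)$ with $y_0=u(x_0)$, and push Haar measure forward (with the measurability from Corollary~\ref{cor:Fj.measurable}); then apply Corollary~\ref{cor:ergodic-haar-transfer} to the minimal amenable, hence hereditarily amenable, flow $Y$ and conclude by the unique ergodicity of Theorem~\ref{thm:nonmetric-unique-ergodicity}. The only blemish is the sentence claiming that $\ev_{x_0}$ factors through $\Phi$ on $I_Y$, where $\Phi(\eta)(x_0)$ is meaningless if $x_0\notin Y$; what you actually use, and what is true, is the identity $k(x_0)=\Phi(k)(y_0)$ for $k\in K$.
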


\begin{proof}
Let $y_0:=u(x_0)\in Y$. By
Proposition~\ref{prop:tame-ergodic-support-minimal}, the flow
$(Y,G)$ is minimal; it is also tame, and it carries the
invariant probability measure $\mu|_Y$.
Restriction to $Y$ defines a continuous semigroup epimorphism
$$r:E(X,G)\longrightarrow E(Y,G),\qquad r(\eta):=\eta|_Y.$$
Put
$$\CM_Y:=r[\CM],\qquad v:=r(u),\qquad K_Y:=v\CM_Y=r[K].$$
By Fact~\ref{fact:ellis-group-functoriality}, $\CM_Y$ is a minimal
left ideal, $v$ is idempotent, and
$$r|_K:(K,\tau)\longrightarrow(K_Y,\tau)$$
is a continuous group epimorphism satisfying $(r|_K)_*\mathfrak h_K=\mathfrak h_{K_Y}$.

By Corollary~\ref{cor:ergodic-haar-transfer}, the measure
$(\ev_{y_0})_*\mu_{K_Y}$ is $G$-invariant. 
On the other hand,
Theorem~\ref{thm:nonmetric-unique-ergodicity} shows that $(Y,G)$ is
uniquely ergodic. Hence, $(\ev_{y_0})_*\mu_{K_Y}=\mu|_Y$.

Fix
$f\in C(X,\mathbb C)$. Using $ku=k$ for $k\in K$,
$y_0=u(x_0)$, 
$(\ev_{y_0})_* \mu_{K_Y} = \mathfrak{h}_{K_Y}$
and
$(r|_K)_*\mathfrak h_K=\mathfrak h_{K_Y}$, we obtain
\begin{IEEEeqnarray*}{rCcCl}
\int_X f\,d\bigl((\ev_{x_0})_*\mu_K\bigr)
&=& \int_K f\bigl(k(x_0)\bigr)\,d\mathfrak h_K(k)
&=& \int_K f\bigl(k(u(x_0))\bigr)\,d\mathfrak h_K(k)\\
&=& \int_K f\bigl(r(k)(y_0)\bigr)\,d\mathfrak h_K(k)
&=& \int_{K_Y}f\bigl(\ell(y_0)\bigr)\,d\mathfrak h_{K_Y}(\ell)\\
&=& \int_Y f|_Y\,d (\ev_{y_0})_*\mu_{K_Y} &=& \int_X f\,d\mu. 
\end{IEEEeqnarray*}
Since $f\in C(X,\mathbb C)$ was arbitrary, $(\ev_{x_0})_*\mu_K=\mu$.
\end{proof}

\begin{cor}\label{cor:ergodic-ellis-realization}
Every tame ambit has the ergodic Ellis-realization property.
\end{cor}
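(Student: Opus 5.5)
The plan is to reduce the corollary directly to Lemma~\ref{lem:ellis-transfer-minimal-support}. Let $(X,x_0,G)$ be a tame ambit and fix $\mu\in\Erg_G(X)$. We need one Ellis group $K$ with $(\ev_{x_0})_*\mu_K=\mu$. Put $Y:=\supp(\mu)$. Since $\overline{Gx_0}=X$, the hypothesis $Y\subseteq\overline{Gx_0}$ of Lemma~\ref{lem:ellis-transfer-minimal-support} holds automatically. So the task reduces to finding a minimal left ideal $\CM\unlhd_m E(X,G)$ with $\CM\subseteq I_Y=\ev_{x_0}^{-1}[Y]$. Any idempotent $u\in\CM$ then yields $K=u\CM$ as required.

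To construct $\CM$, note first that $I_Y$ is a closed subset of $E:=E(X,G)$, because $\ev_{x_0}$ is continuous and $Y$ is closed. It is nonempty because $\ev_{x_0}[E]=\overline{Gx_0}=X\supseteq Y$. It is a left ideal: for $p\in E$ and $\eta\in I_Y$ we have $(p\eta)(x_0)=p(\eta(x_0))\in pY\subseteq Y$. Here $pY\subseteq Y$ because $Y$ is closed and $G$-invariant, so it is invariant under pointwise limits of elements of $G$; its $G$-invariance is that of a support of an invariant measure. A closed left ideal is a subflow of $(E,G)$, so by Zorn's lemma it contains a minimal subflow. Minimal subflows of $E$ are exactly the minimal left ideals, so this gives $\CM\unlhd_m E$ with $\CM\subseteq I_Y$. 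Alternatively, for any $\eta\in I_Y$ and any minimal left ideal $\CM'$, the set $\CM'\eta$ is a minimal left ideal contained in $E\eta\subseteq I_Y$.

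Lemma~\ref{lem:ellis-transfer-minimal-support} then gives $(\ev_{x_0})_*\mu_K=\mu$. Tameness is needed so that $\mu_K$ is defined (Lemma~\ref{defprop:haar-decompression}) and so that the lemma applies.

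No step here is substantial. The only points needing care are that $I_Y$ is a left ideal, which uses that $Y$ is closed and $G$-invariant, and that $I_Y$ is nonempty, which is where the ambit hypothesis enters.
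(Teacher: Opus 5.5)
Your proposal is correct and follows the paper's proof. The paper also sets $Y:=\supp(\mu)$, notes that $I_Y=\ev_{x_0}^{-1}[Y]$ is a left ideal, picks a minimal left ideal $\CM\subseteq I_Y$ with an idempotent $u$, and applies Lemma~\ref{lem:ellis-transfer-minimal-support} using $Y\subseteq X=\overline{Gx_0}$; your checks of closedness, nonemptiness via the ambit hypothesis, $pY\subseteq Y$, and existence of a minimal left ideal inside $I_Y$ fill in details the paper leaves implicit.
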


\begin{proof}
Let $(X,x_0,G)$ be a tame ambit, fix $\mu\in\Erg_G(X)$, and put
$$Y:=\supp(\mu),\qquad I_Y:=\ev_{x_0}^{-1}[Y].$$
Note that $I_Y$ is a left ideal of $E(X,G)$.

Choose a minimal left ideal $\CM\unlhd_m E(X,G)$ contained in $I_Y$,
an idempotent $u\in\CM$, and put $K:=u\CM$. Since
$Y\subseteq X=\overline{Gx_0}$, Lemma~\ref{lem:ellis-transfer-minimal-support}
gives $(\ev_{x_0})_*\mu_K=\mu$.
\end{proof}

For comparison, in the more restrictive one-map setting of tame
$\mathbb N_0$-semicascades, Romanov~\cite{Romanov19} realizes ergodic
measures through generalized ergodic averages in the K\"ohler operator
semigroup, a mechanism different from Haar decompression from Ellis groups.

\subsection{Summary and the Choquet theorem}
Let $(X,x_0,G)$ be a tame hereditarily amenable ambit. Put $E:=E(X,G)$ and
$$\mathscr E(X,G):=\{u\CM:\CM\unlhdm E,\ u^2=u\in\CM\}.$$
By Corollary~\ref{cor:ergodic-haar-transfer}, the map
$\Pi:\mathscr E(X,G)\longrightarrow\Erg_G(X)$, given by $\Pi(K):=(\ev_{x_0})_*\mu_K$,
is well defined, and Corollary~\ref{cor:ergodic-ellis-realization} shows
that it is surjective. Thus,
$$\{(\ev_{x_0})_*\mu_K:K\in\mathscr E(X,G)\}=\Erg_G(X).$$
Here hereditary amenability is used only to ensure that every Ellis
group produces an invariant ergodic measure; 
the ergodic Ellis-realization property itself holds for every tame ambit.

We now assume additionally that $X$ is metrizable and briefly recall the relevant Choquet-theoretic background (cf. \cite{Phelps2001}). Since \(X\)
is compact metrizable, the weak-$*$ compact convex set \(\Prob_G(X)\)
is a metrizable Choquet simplex, and its extreme boundary is precisely
$$\operatorname{ext}\Prob_G(X)=\Erg_G(X).$$
Moreover, \(\Erg_G(X)\) is a \(G_\delta\)-subset of \(\Prob_G(X)\) (cf. \cite{AlfsenBorelSimplex}).
Thus, \(\Prob_G(X)\) is a Bauer simplex precisely when
\(\Erg_G(X)\) is closed, whereas, in the nontrivial case, it is a
Poulsen simplex precisely when \(\Erg_G(X)\) is dense (cf. \cite{LindenstraussOlsenSternfeld}). In particular,
every $\mu\in\Prob_G(X)$ has a unique
representing probability measure $\lambda_\mu$ on $\Erg_G(X)$
\cite[Proposition~3.1 and Chapters~10 and~12]{Phelps2001}.

Equip \(\mathscr E(X,G)\) with the $\sigma$-algebra
$\left\{\Pi^{-1}[B]:B\in\mathcal B(\Erg_G(X))\right\}$.
Since \(\Pi\) is surjective, there is
a unique probability measure \(\widehat\mu\) on
$\mathscr E(X,G)$ determined by $\Pi_*\widehat\mu=\lambda_\mu$.

\begin{cor}
\label{cor:choquet-ellis-decomposition}
Let \((X,x_0,G)\) be a metrizable tame hereditarily amenable ambit.
Then, for every \(\mu\in\Prob_G(X)\),
$$\mu=\int_{\mathscr E(X,G)}(\ev_{x_0})_*\mu_K\,d\widehat\mu(K)$$
in the weak-\(^*\) sense.
\end{cor}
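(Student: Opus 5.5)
The plan is to combine the ergodic decomposition of $\mu$ with the identification $\Pi(K)=(\ev_{x_0})_*\mu_K$, and then change variables along $\Pi$. Since $X$ is compact metrizable, $\Prob_G(X)$ is a metrizable Choquet simplex with extreme boundary $\Erg_G(X)$. So, by the recalled Choquet theory \cite{Phelps2001}, $\mu$ is the barycenter of its unique representing measure $\lambda_\mu$, which is concentrated on the Borel set $\Erg_G(X)$. Concretely, for every $f\in C(X,\mathbb C)$,
$$\int_X f\,d\mu=\int_{\Erg_G(X)}\Bigl(\int_X f\,d\nu\Bigr)\,d\lambda_\mu(\nu).$$
The function $\nu\mapsto\int_X f\,d\nu$ is continuous on $\Prob_G(X)$, hence Borel on $\Erg_G(X)$, so the right-hand side is well defined.

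Next I would pull this back to $\mathscr E(X,G)$. By Corollary~\ref{cor:ergodic-haar-transfer}, $\Pi$ is a well-defined map into $\Erg_G(X)$, and by Corollary~\ref{cor:ergodic-ellis-realization} it is surjective. The $\sigma$-algebra on $\mathscr E(X,G)$ is exactly $\Pi^{-1}[\mathcal B(\Erg_G(X))]$.

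The point to verify carefully is that $\widehat\mu$ is well defined. Because $\Pi$ is surjective, the assignment $\Pi^{-1}[B]\mapsto\lambda_\mu(B)$ is unambiguous: if $\Pi^{-1}[B]=\Pi^{-1}[B']$, then $B=B'$. This assignment is countably additive because preimages commute with countable unions and complements. Hence $\widehat\mu$ is a probability measure with $\Pi_*\widehat\mu=\lambda_\mu$.

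For a fixed $f$, the function $K\mapsto\int_X f\,d\bigl((\ev_{x_0})_*\mu_K\bigr)$ equals $\hat f\circ\Pi$, where $\hat f(\nu):=\int f\,d\nu$. It is therefore measurable for this $\sigma$-algebra. The abstract change-of-variables formula for pushforward measures (check on indicators of $B$, then pass to simple functions and bounded limits) gives
$$\int_{\mathscr E(X,G)}\int_X f\,d\bigl((\ev_{x_0})_*\mu_K\bigr)\,d\widehat\mu(K)=\int_{\Erg_G(X)}\hat f\,d\lambda_\mu=\int_X f\,d\mu.$$
This is precisely the weak-$^*$ identity claimed. Finally, $\int_X f\,d\bigl((\ev_{x_0})_*\mu_K\bigr)=\int_K f(k(x_0))\,d\mathfrak h_K(k)$ by Lemma~\ref{defprop:haar-decompression}, which also expresses the integrand directly via Haar measures, though this reformulation is not needed for the proof.

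The only genuinely nontrivial input is the Choquet representation: that $\lambda_\mu$ exists, is unique, and is concentrated on $\Erg_G(X)$. Uniqueness uses the simplex property of $\Prob_G(X)$, and concentration on the extreme boundary uses metrizability, which makes $\Erg_G(X)$ a $G_\delta$ set. Since this is cited from \cite{Phelps2001}, the remaining obstacle is only the measure-theoretic bookkeeping for $\widehat\mu$ on a possibly non-standard space, and surjectivity of $\Pi$ resolves it.
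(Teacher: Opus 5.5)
Your proposal is correct and follows the paper's route. You use the unique Choquet representing measure $\lambda_\mu$ on $\Erg_G(X)$, pull it back along the surjection $\Pi$ (well defined by Corollary~\ref{cor:ergodic-haar-transfer}, onto by Corollary~\ref{cor:ergodic-ellis-realization}) to get $\widehat\mu$ with $\Pi_*\widehat\mu=\lambda_\mu$, and then change variables; you simply spell out the bookkeeping the paper leaves implicit, namely that $\widehat\mu$ is well defined and countably additive because $\Pi$ is surjective, and that $\hat f\circ\Pi$ is measurable.
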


\begin{proof}
    By the Choquet representation theorem and its application to invariant
measures \cite[Proposition~3.1 and Chapters~10 and~12]{Phelps2001},
$\mu$ is the barycenter of its unique representing probability measure
$\lambda_\mu$ on $\Erg_G(X)$.
\end{proof}

\subsection{A link to model theory}
\label{subsec:model.theory}
We conclude by showing that the abstract transfer and realization mechanisms
recover two model-theoretic classifications of ergodic measures. The
first concerns definable groups in NIP theories and the measures of
Chernikov and Simon \cite{ArtemPierre}; the second concerns automorphism flows in amenable NIP theories and the measures of the first author and Rzepecki \cite{HRz}.

\begin{remark} 
Evaluation of Haar decompressions recovers the description of ergodic measures by Chernikov and Simon in the context of definable groups in NIP theories from 
\cite[Theorem 4.5]{ArtemPierre}. 
\end{remark}

\begin{proof}
Consider a definably amenable group $G$ definable in an NIP theory $T$. Let $\mathfrak{C} \models T$ be a monster model, and consider the ambit $(X,x_0,G):=(S_G(\mathfrak{C}),\tp(e/\mathfrak{C}),G)$ (where $G=G(\mathfrak{C})$). Since by \cite{CPS14} amenability transfers to the Shelah expansion, the flow $(S_G(\mathfrak{C}^{\ext}),G)$ is amenable, and hence $(E(S_G(\mathfrak{C}),G),G)$ is amenable as a factor of $(S_G(\mathfrak{C}^{\ext}),G)$. Thus, $(S_G(\mathfrak{C}),\tp(e/\mathfrak{C}),G)$ is hereditarily amenable by Corollary \ref{cor:ellis-amenability-characterization}. This flow is also tame by the NIP assumption (see the introduction in \cite{ArtemPierre}).

As observed above, Corollaries \ref{cor:ergodic-haar-transfer} and \ref{cor:ergodic-ellis-realization} yield the following description of all ergodic measures on $X$:
$$\textrm{Erg}_G(X)=\{(\ev_{x_0})_* \mu_K: K \textrm{ is an Ellis group in } E(X,G)\}.$$

For an $f$-generic type $p \in S_G(\mathfrak{C})$, Definition 3.16 in \cite{ArtemPierre} yields a $G$-invariant measure $\mu_p$ on $X$ which is concentrated on $\overline{Gp}$ by \cite[Remark 3.17]{ArtemPierre}. 
Theorem 4.5 in \cite{ArtemPierre} classifies the ergodic measures on $X$ as all the measures $\mu_p$ for $f$-generic $p$. By the comment after \cite[Proposition 3.31]{ArtemPierre}, instead of $f$-generics we can use here almost periodic types. Our goal is to show that this classification follows from our results. For that it remains to show that 
$$\{\mu_p: p \in X \textrm{ almost periodic}\} = \{(\ev_{x_0})_* \mu_K: K \textrm{ an Ellis group in } E(X,G)\}.$$

For ($\subseteq$) consider any almost periodic $p \in X$. Then there is a minimal left ideal $\mathcal{M}$ in $E(X,G)$ 
such that $p \in \ev_{x_0}[\mathcal{M}] = \mathcal{M}x_0=\mathcal{M} p = \overline{Gp}$. Consider an arbitrary Ellis group $K$ contained in $\mathcal{M}$. By Theorem \ref{thm:haar-decompression-amenability}, 
Remark \ref{rem:invariant-decompression-ergodic}(1),
and Theorem \ref{thm:nonmetric-unique-ergodicity}, 
$(\ev_{x_0})_* \mu_K$ is the unique invariant measure concentrated on the tame minimal flow $\mathcal{M}x_0$.  Since $\mu_p$ is an invariant measure concentrated on $\overline{Gp}=\mathcal{M}x_0$, we get $\mu_p=(\ev_{x_0})_*\mu_K$.

For ($\supseteq$) start from an Ellis group $K$ contained in some minimal left ideal $\mathcal{M}$. Take any $p \in \ev_{x_0}[\mathcal{M}]$. Then $p$ is almost periodic, so the above argument yields $\mu_p=(\ev_{x_0})_*\mu_K$.
\end{proof}

\begin{remark}\label{rem:hoffmann-rzepecki-decompression}
The same mechanism recovers the conclusions of \cite{HRz},
i.e. the description of ergodic measures in the context of countable amenable NIP theory, where the acting group is the group of automorphisms.
\end{remark}

\begin{proof}
Recall the notation of
\cite[Section~5]{HRz}: let $T$ be an amenable NIP theory,
$M\preceq N\preceq\mathfrak C$, let $\bar n$ enumerate $N$, and put
$X:=S_{\bar n}(\mathfrak C)$,  $x_0:=\tp(\bar n/\mathfrak C)$,
$G:=\aut(\mathfrak C)$, and let $I:=\fGen\subseteq S_{\bar n}^{\inv}(\mathfrak C,M)$ be the collection of f-generic types. Because $T$ is NIP, we have that the flow $(X,G)$ is tame (e.g. by \cite{KrRz} or \cite{CodHoff23}). 
Because $T$ is amenable, we have that the flow $(X,G)$ is amenable (by \cite{HruKruPi}).
Every minimal $G$-subflow $Z$ of $X$ is
uniquely ergodic with the unique $G$-invariant Keisler measure $\mu_p$
defined in \cite[Definition~5.2]{HRz} for $p\in Z\subseteq I$
(by \cite[Theorem 5.4]{HRz} and Theorem~\ref{thm:nonmetric-unique-ergodicity}). 
This was established in \cite{HRz} after assuming that $T$ and $M$ are countable (cf. \cite[Corollary~5.14]{HRz}). Moreover, if $T$ and $M$ are countable, then $\{\mu_p:p\in I\}=\Erg_G(X)$ by \cite[Theorem~5.17]{HRz}.
Again, using our more general machinery, we can remove the aforementioned countability assumption of \cite[Theorem~5.17]{HRz} in exchange for working with almost periodic types: 
by Corollary \ref{cor:ergodic-measures-minimal-subflows} and \cite[Theorem 5.4]{HRz} we get
$$\Erg_G(X)=\{\mu_p:p\in X\text{ is almost periodic}\}.$$
Now, we realize measures $\mu_p$ of almost periodic types as Haar decompressions.
Indeed, because flow $(X,G)$ is tame and hereditarily amenable, we may use
Corollaries \ref{cor:ergodic-haar-transfer} and \ref{cor:ergodic-ellis-realization}
and conclude
$$\{\mu_p: p \in X \textrm{ almost periodic}\} = \{(\ev_{x_0})_* \mu_K: K \textrm{ an Ellis group in } E(X,G)\},$$
for every amenable NIP theory $T$ (not necessarily countable).
\end{proof}

\section*{Acknowledgements}
The first author thanks Wei Dai for an insightful discussion of Choquet representations during their meeting at the Tianyuan Mathematics Research Center.

The authors used large language models during manuscript preparation to assist with language editing and typesetting; to search for examples and relevant references; to check minor technical details in proofs; and to test selected arguments and proof strategies by seeking counterexamples. All outputs were independently reviewed and verified by the authors.

In the final stages of manuscript preparation, the authors used Aristotle, developed by Harmonic, to mechanically verify selected statements in Lean~4. The authors take full responsibility for the mathematical content of the paper.

\appendix
\numberwithin{equation}{section}
\section{Examples}\label{sec:A}
We collect three examples. The first shows that the inclusion of an
Ellis group into the enveloping semigroup need not be Borel. The second
gives an explicit split-circle dihedral illustration of
Theorem~\ref{thm:haar-decompression-amenability}: neither Ellis group is
$G$-invariant as a subset of the enveloping semigroup, but their Haar
decompressions coincide and are $G$-invariant. The third shows that
tameness is essential in the characterization of amenability of the
Ellis flow.

\subsection{Non-Borel inclusion of Ellis group}
\label{sec:A1}
Put $\mathbb T:=\mathbb R/\mathbb Z$, equipped with its usual circular
order. For $A\subseteq\mathbb T$, let
$\operatorname{Split}(\mathbb T;A)$ be the circularly ordered space
obtained by replacing each $t\in A$ by two adjacent points
$t^-<t^+$ and leaving every $t\notin A$ unsplit. We use the convention
$t^-=t^+=t$ at unsplit points, and equip
$\operatorname{Split}(\mathbb T;A)$ with the cyclic-order topology,
that is, the topology generated by the open cyclic intervals.

Choose $\alpha,\beta\in\mathbb R$ such that
$1,\alpha,\beta$ are linearly independent over $\mathbb Q$. For
$\xi\in\{\alpha,\beta\}$, put $A_\xi:=\{n\xi:n\in\mathbb Z\}\subseteq\mathbb T$ 
and let $X_\xi:=\operatorname{Split}(\mathbb T;A_\xi)$.
Let $T_\xi(t^\rho):=(t+\xi)^\rho$,
and equip $X:=X_\alpha\times X_\beta$ with the diagonal
$\mathbb Z$-action generated by $T_\alpha\times T_\beta$.

By \cite[Proposition~4.1]{GM}, this is a minimal metrizable tame
system, and
$$\CM:=\left\{\bigl(p^\sigma_{\alpha,\gamma},
      p^\rho_{\beta,\eta}\bigr):\sigma,\rho\in\{+,-\},\
\gamma,\eta\in\mathbb T\right\}$$
is a minimal $\mathbb Z$-subflow of $E(X,\mathbb Z)$, hence a minimal
left ideal. Here $p^\sigma_{\xi,\gamma}(t^\rho):=(t+\gamma)^\sigma$.
The composition rule
$$p^\sigma_{\xi,\gamma}\circ p^\rho_{\xi,\eta}=p^\sigma_{\xi,\gamma+\eta}$$
shows that $u:=\bigl(p^+_{\alpha,0},p^+_{\beta,0}\bigr)$
is an idempotent and that
$$K:=u\CM=\left\{\bigl(p^+_{\alpha,\gamma},p^+_{\beta,\eta}\bigr):
(\gamma,\eta)\in\mathbb T^2\right\}.$$

The natural factor map $X\to\mathbb T^2$ induces a semigroup
epimorphism
$$\pi_E:E(X,\mathbb Z)\longrightarrow\mathbb T^2.$$
By Fact~\ref{fact:ellis-group-functoriality}, its restriction
$\pi_E|_K:(K,\tau)\longrightarrow\mathbb T^2$,
given by $\bigl(p^+_{\alpha,\gamma},p^+_{\beta,\eta}\bigr)
\longmapsto(\gamma,\eta)$,
is a continuous quotient group homomorphism. It is bijective, and
therefore it is a homeomorphism.

On the other hand, \cite[Proposition~4.1]{GM} shows that
$$\Delta:=\left\{\bigl(p^+_{\alpha,\gamma},p^+_{\beta,-\gamma}\bigr):\gamma\in\mathbb T\right\}$$
is discrete in the topology inherited from $E(X,\mathbb Z)$. In the
$\tau$-topology it is a closed ordinary circle. Choose a non-Borel set
$B\subseteq\mathbb T$ and put
$$\Delta_B:=\left\{\bigl(p^+_{\alpha,\gamma},p^+_{\beta,-\gamma}\bigr):
\gamma\in B\right\}.$$
Since $\Delta$ is discrete in the inherited topology, there is an open
set $O\subseteq E(X,\mathbb Z)$ such that
$O\cap\Delta=\Delta_B$. If the inclusion
$j:(K,\tau)\to E(X,\mathbb Z)$ were Borel, then
$j^{-1}[O]\cap\Delta=\Delta_B$ would be Borel in the ordinary circle
$\Delta$, a contradiction.

\subsection{Haar decompression restores invariance}
\label{subsec:split-circle-ordinary}
This example exhibits two Ellis groups which are interchanged by the
reflection, although their Haar decompressions coincide and are
$G$-invariant. 

The \emph{fully split circle} is $X=\mathbb T^{\bowtie}:=
\operatorname{Split}(\mathbb T;\mathbb T)$ (cf. Subsection \ref{sec:A1}).
Thus every $t\in\mathbb T$ is replaced by two adjacent points
$t^-<t^+$. The arcs $[a^+,b^-]$, $a\neq b$, form a clopen basis, and
the map
$\pi:X\longrightarrow\mathbb T$, $\pi(t^\pm):=t$,
is continuous and onto. For $\rho\in\{+,-\}$, set $\rho^1:=\rho$ and
let $\rho^{-1}$ be the opposite sign.
For $\varepsilon\in\{\pm1\}$ and $a\in\mathbb T$, define
$$P_{\varepsilon,a}(t^\rho):=(\varepsilon t+a)^{\rho^\varepsilon}.$$
Then $P_{\sigma,d}\circ P_{\varepsilon,a}=P_{\sigma\varepsilon,d+\sigma a}$.
Fix an irrational $\alpha\in\mathbb T$, put
$D:=\langle\alpha\rangle$, and let
$$
R:=P_{1,\alpha},
\qquad
J:=P_{-1,0},
\qquad
G:=\langle R,J\rangle
=
\{P_{\varepsilon,d}:
  \varepsilon\in\{\pm1\},\ d\in D\}.
$$
Thus, $G\cong D_\infty$ is amenable, and $(X,x_0,G)$ is an ambit for
$x_0:=0^+$. The flow is tame: the index-two subgroup
$\langle R\rangle$ acts tamely by
\cite[Theorem~4.5]{Glasner3}, and the $G$-orbit of every $f\in C(X)$ is
the union of two translates of its $\langle R\rangle$-orbit.

For $\delta\in\{+,-\}$, $\varepsilon\in\{\pm1\}$, and $a\in\mathbb T$,
define the collapsed map
$$
P^\delta_{\varepsilon,a}(t^\rho)
:=
(\varepsilon t+a)^\delta,
\qquad
M_\delta
:=
\{P^\delta_{\varepsilon,a}:
  \varepsilon\in\{\pm1\},\ a\in\mathbb T\}.
$$
The superscript records the side onto which every two-point fibre is
collapsed. The product formulas are
$$P^\delta_{\varepsilon,a}\circ
P^{\delta'}_{\varepsilon',b}=
P^\delta_{\varepsilon\varepsilon',a+\varepsilon b},\quad
P_{\sigma,d}\circ P^\delta_{\varepsilon,a}=
P^{\delta^\sigma}_{\sigma\varepsilon,d+\sigma a},\quad
P^\delta_{\varepsilon,a}\circ P_{\sigma,d}=
P^\delta_{\varepsilon\sigma,a+\varepsilon d}.$$

\begin{lemma}
\label{prop:sc-E-classification}
One has
$$
E(X,G)=G\,\dot\cup\,M_+\,\dot\cup\,M_-.
$$
Moreover, $M:=M_+\cup M_-$ is the unique minimal left ideal. For
$\delta\in\{+,-\}$,
$$
u_\delta:=P^\delta_{1,0}
\qquad\text{and}\qquad
K_\delta:=u_\delta M=M_\delta.
$$
Then $u_\delta$ is idempotent, and
$$
(K_\delta,\tau)
\longrightarrow
O(2)\cong\{\pm1\}\ltimes\mathbb T,
\qquad
P^\delta_{\varepsilon,a}\longmapsto(\varepsilon,a),
$$
is an isomorphism of compact Hausdorff topological groups.
\end{lemma}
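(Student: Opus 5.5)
Throughout, limits are taken pointwise on the split circle $X=\mathbb T^{\bowtie}$. The plan is to compute $E(X,G)$ directly by identifying all such limits of nets in $G$.

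\emph{Step 1: every element of $E(X,G)$ lies in $G\cup M_+\cup M_-$.} Let $(P_{\varepsilon_i,d_i})_i$ be a net converging to $\eta\in E(X,G)$. Passing to a subnet, we may assume $\varepsilon_i=\varepsilon$ is constant and $d_i\to a$ in $\mathbb T$. We then split into two cases.
\begin{enumerate}[(a)]
\item If $d_i=a$ eventually, then $\eta=P_{\varepsilon,a}$ with $a\in D$, so $\eta\in G$.
\item Otherwise we pass to a subnet with $d_i\neq a$ and $d_i$ approaching $a$ from one side, say $d_i\downarrow a$ (respectively $d_i\uparrow a$). In the split topology, a point $s^\rho$ moved by a small positive rotation converges to $s^+$. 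Hence $(\varepsilon t+d_i)^{\rho^\varepsilon}\to(\varepsilon t+a)^+$ for every $t^\rho$, and the limit is $P^+_{\varepsilon,a}$ (respectively $P^-_{\varepsilon,a}$).
\end{enumerate}
Conversely, every $P^\delta_{\varepsilon,a}$ with $a\in\mathbb T$ arises this way, because $D$ is dense and we may choose $d_i\in D\setminus\{a\}$ converging to $a$ from the required side. The three pieces are disjoint: elements of $G$ are injective, while elements of $M_\pm$ identify $t^+$ and $t^-$, and $M_+$, $M_-$ are distinguished by their image sides. This gives $E(X,G)=G\,\dot\cup\,M_+\,\dot\cup\,M_-$. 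I expect this step to be the main obstacle, since it requires checking carefully the convergence of split points under one-sided rotations and under the reflection (which swaps sides via $\rho^\varepsilon$).

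\emph{Step 2: minimal left ideal.} From the listed product formulas, $M$ is a left ideal: $G M\subseteq M$, $MM\subseteq M$, and $M$ is closed, being the set of non-injective elements. Moreover, for every $q\in M$ we have $Mq=M$. Indeed, given $q=P^{\delta'}_{\varepsilon',b}$, the product $P^\delta_{\varepsilon,a}q$ ranges over all of $M$, since $\varepsilon\varepsilon'$ and $a+\varepsilon b$ can be made arbitrary, and the superscript $\delta$ is free. Hence $M$ is minimal. It is the unique minimal left ideal because any left ideal $L$ satisfies $Lq\subseteq L$ and $Lq\subseteq Mq$-type products land in $M$; more simply, every left ideal contains $Mq$ for $q\in L$, and $Mq\subseteq L\cap M$, so minimality forces uniqueness. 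Next, $u_\delta^2=P^\delta_{1,0}P^\delta_{1,0}=P^\delta_{1,0}$, so $u_\delta$ is idempotent, and $u_\delta P^{\delta'}_{\varepsilon,a}=P^\delta_{\varepsilon,a}$ shows $u_\delta M=M_\delta$. The product rule $P^\delta_{\varepsilon,a}P^\delta_{\varepsilon',b}=P^\delta_{\varepsilon\varepsilon',a+\varepsilon b}$ is exactly the multiplication of $\{\pm1\}\ltimes\mathbb T\cong O(2)$, so the map $\theta\colon P^\delta_{\varepsilon,a}\mapsto(\varepsilon,a)$ is a group isomorphism.

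\emph{Step 3: topology.} By Lemma~\ref{fact: RNC}, $(K_\delta,\tau)$ is a compact Hausdorff group, so it suffices to show that $\theta$ is continuous into $O(2)$; a continuous bijection from a compact space to a Hausdorff space is then a homeomorphism. For continuity I will use Fact~\ref{fact:ellis-group-functoriality} with two factor maps. The first is $\pi\colon X\to\mathbb T$. Its enveloping semigroup is the closure of $\{t\mapsto\varepsilon t+d\}$, which equals the compact group $\{\pm1\}\ltimes\mathbb T$ since $D$ is dense. The induced map $\pi_E$ sends $P^\delta_{\varepsilon,a}$ to $(\varepsilon,a)$, and Fact~\ref{fact:ellis-group-functoriality} makes its restriction to $K_\delta$ $\tau$-continuous onto that group, whose $\tau$-topology is its usual topology in the equicontinuous case. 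This restriction is exactly $\theta$, which completes the argument.
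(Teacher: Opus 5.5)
Your proposal is correct and follows essentially the paper's route: you classify pointwise limits of nets in $G$ by constant orientation and by whether the translation parameter is eventually constant or converges strictly from one side, you read off the ideal structure and idempotents from the product formulas, and you get the topological isomorphism because $\pi_E|_{K_\delta}$ is a $\tau$-continuous bijective homomorphism onto the Hausdorff group $O(2)$. A few points are cosmetic only: closedness of $M$ is not needed, and "non-injective" is not by itself a reason for it (use $M=\{\eta\in E:\eta(0^+)=\eta(0^-)\}$ instead); only one factor map is actually used; and quasi-compactness of $(K_\delta,\tau)$ alone already makes the continuous bijection a homeomorphism, so you do not need to appeal to tameness there.
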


\begin{proof}
If $d_i\in D$ approaches $a$ strictly from side $\delta$, then
$$
P_{\varepsilon,d_i}
\longrightarrow
P^\delta_{\varepsilon,a}
$$
pointwise on $X$. Conversely, after passing to a subnet, every net of
principal maps has constant orientation and its translation parameters are
either eventually constant or approach their limit strictly from one side.
This gives the classification and shows that, for each fixed $\varepsilon\in\{\pm1\}$, the map
$$
\mathbb T^{\bowtie}\longrightarrow E(X,G),
\qquad
a^\delta\longmapsto P^\delta_{\varepsilon,a},
$$
is a homeomorphism onto its image.

The product formulas imply that $M$ is a left ideal
and that every nonempty left ideal contains $M$, and they give the asserted
idempotents and Ellis groups. The factor map $\pi$ induces a continuous
semigroup epimorphism
$$
\pi_E:E(X,G)\longrightarrow O(2)
$$
which sends both $P_{\varepsilon,a}$ and
$P^\delta_{\varepsilon,a}$ to $(\varepsilon,a)$. Its restriction to
$K_\delta$ is a continuous bijective homomorphism of compact Hausdorff
groups, and hence is an isomorphism.
\end{proof}

\begin{lemma}
\label{lem:sc-countable-jumps}
For every $f\in C(X,\mathbb C)$, the set
$$
J_f
:=
\{a\in\mathbb T:f(a^+)\neq f(a^-)\}
$$
is countable. The two sections
$$
a\longmapsto a^+
\qquad\text{and}\qquad
a\longmapsto a^-
$$
are Borel.
\end{lemma}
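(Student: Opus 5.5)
For the countability of $J_f$, I will use the fact that continuous functions on the fully split circle have only countably many jumps, via a standard counting argument. Fix $f\in C(X,\mathbb C)$ and, for $n\geqslant1$, put $J_{f,n}:=\{a\in\mathbb T:|f(a^+)-f(a^-)|\geqslant 1/n\}$. Then $J_f=\bigcup_n J_{f,n}$, so it suffices to show that each $J_{f,n}$ is finite.

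Suppose some $J_{f,n}$ were infinite. Choose distinct points $a_k\in J_{f,n}$. By compactness of $\mathbb T$ I pass to a subsequence converging to some $b\in\mathbb T$. After passing to a further subsequence, the $a_k$ approach $b$ strictly and monotonically from one side, say from below in the circular order. Then in $X$ both $a_k^+\to b^-$ and $a_k^-\to b^-$. Indeed, the basic clopen arcs $[c^+,b^-]$, with $c$ slightly below $b$, eventually contain both $a_k^\pm$. The case of approach from above is symmetric, with both points converging to $b^+$. Continuity of $f$ then gives $f(a_k^+)-f(a_k^-)\to0$, which contradicts $|f(a_k^+)-f(a_k^-)|\geqslant1/n$. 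Hence each $J_{f,n}$ is finite, and $J_f$ is countable.

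For the Borel measurability of the sections $s_\pm(a):=a^\pm$, I will check preimages of a basic clopen arc $[c^+,d^-]$ with $c\neq d$. Since $s_+(a)\in[c^+,d^-]$ exactly when $a^+$ lies in that arc, the preimage $s_+^{-1}[[c^+,d^-]]$ is the interval of $\mathbb T$ from $c$ to $d$ with the endpoint $c$ included and $d$ excluded (half-open, up to the endpoint conventions). This set is Borel, and the computation for $s_-$ is analogous. The arcs $[a^+,b^-]$ form a basis, but $X$ is not second countable, so arbitrary open sets are not countable unions of basic arcs. The cleanest fix will be to test Borel measurability against continuous functions instead. I need that $f\circ s_\pm$ is Borel for every $f\in C(X)$, which I will deduce from the first part. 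Off the countable set $J_f$, $f(a^+)=f(a^-)=:\tilde f(a)$, and $\tilde f$ is continuous at every point of $\mathbb T\setminus J_f$ by the same convergence argument. So $f\circ s_\pm$ is continuous except at countably many points, hence Borel.

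The main obstacle is the non-second-countability of $X$, which prevents reducing to a countable base. The precise meaning of "Borel" therefore needs care. If Borel is meant with respect to the Borel $\sigma$-algebra of $X$, I would argue that every open $U\subseteq X$ is a union of maximal open arcs. These arcs are disjoint and have disjoint images in $\mathbb T$, so there are only countably many of them; each has a Borel preimage, and so $s_\pm^{-1}[U]$ is Borel.
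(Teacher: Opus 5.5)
Your proof is correct. The countability part is exactly the paper's argument: show each $J_{f,n}$ is finite by taking a one-sided convergent sequence along which both $a_k^\pm$ tend to the same one of $b^\pm$.

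For the Borel part, the paper simply pulls back the basic clopen arcs $[c^+,d^-]$. Their preimages are the half-open intervals you computed.

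Your stated reason for not stopping there is mistaken. Although $X$ is not second countable, it is hereditarily Lindelöf, like the double arrow space: its $+$ and $-$ halves carry Sorgenfrey-type subspace topologies. Hence every open set is a countable union of basic arcs, and this is what makes the paper's one-line argument work.

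Your decomposition of an open $U$ into maximal convex open pieces is nonetheless a valid and more explicit route. Two distinct pieces cannot contain $b^-$ and $b^+$ respectively without merging. So their images in $\mathbb T$ are pairwise disjoint and have nonempty interior, and there are only countably many pieces. Each piece pulls back under $s_\pm$ to an interval. Since each piece is itself a countable union of basic arcs, this argument in fact proves the countable-union property you said fails.

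Your detour through $f\circ s_\pm$ for $f\in C(X)$ gives only Baire measurability on its own. It yields Borel measurability here only because $X$ is perfectly normal, i.e. open sets are $F_\sigma$, which rests on the same countability.
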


\begin{proof}
For $n\geqslant1$, put
$$
J_{f,n}
:=
\left\{
a\in\mathbb T:
|f(a^+)-f(a^-)|\geqslant\frac1n
\right\}.
$$
If $J_{f,n}$ were infinite, it would contain a sequence $(a_i)$ of
distinct points converging to some $a\in\mathbb T$. Passing to a
subsequence, all $a_i$ approach $a$ from the same side. Then the pairs
$a_i^-,a_i^+$ converge to the same one of $a^-,a^+$, contradicting
continuity of $f$. Thus, each $J_{f,n}$ is finite, and
$J_f=\bigcup_{n\geqslant1}J_{f,n}$
is countable. The Borel assertion follows directly by taking preimages of
the clopen basis arcs $[a^+,b^-]$.
\end{proof}

Let $\mathfrak h_\delta$ be the normalized Haar measure on $K_\delta$,
and let $\mu_\delta$ be its decompression. Let $da$ denote normalized
Haar measure on $\mathbb T$. Under the identification
$$
K_\delta\cong\{\pm1\}\ltimes\mathbb T,
\qquad
P^\delta_{\varepsilon,a}\longmapsto(\varepsilon,a),
$$
the measure $\mathfrak h_\delta$ corresponds to
$\frac12 \sum_{\varepsilon\in\{\pm1\}} \delta_\varepsilon\otimes da$.

\begin{proposition}
\label{thm:sc-ordinary-decompression}
For every $F\in C(E(X,G),\mathbb C)$,
\begin{equation}
\label{eq:sc-decompression}
\int_{E(X,G)}F\,d\mu_\delta
=
\frac12
\sum_{\varepsilon\in\{\pm1\}}
\int_{\mathbb T}
F(P^\delta_{\varepsilon,a})\,da.
\end{equation}
The two decompressions coincide,
$\mu_+=\mu_-=:\mu_E,$
and $\mu_E$ is $G$-invariant.

The ambit $(X,x_0,G)$ is uniquely ergodic. Its unique invariant probability
measure $\nu_X$ satisfies
\begin{equation}
\label{eq:sc-nuX}
\int_X\varphi\,d\nu_X
=
\frac12
\int_{\mathbb T}
\bigl(\varphi(a^+)+\varphi(a^-)\bigr)\,da
\end{equation}
where $\varphi\in C(X,\mathbb C)$, and we have
$(\ev_{x_0})_*\mu_+=(\ev_{x_0})_*\mu_-=\nu_X$.
\end{proposition}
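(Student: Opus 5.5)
The first formula will follow directly from the definition in Lemma~\ref{defprop:haar-decompression}, using the identification of $(K_\delta,\tau)$ with $O(2)$ from Lemma~\ref{prop:sc-E-classification}. For $F\in C(E(X,G))$ we have $\int F\,d\mu_\delta=\int_{K_\delta}F(j(k))\,d\mathfrak h_\delta(k)$. Under the isomorphism $P^\delta_{\varepsilon,a}\mapsto(\varepsilon,a)$, the Haar measure becomes $\tfrac12\sum_\varepsilon\delta_\varepsilon\otimes da$. This gives \eqref{eq:sc-decompression}. The only subtlety is measurability of $a\mapsto F(P^\delta_{\varepsilon,a})$ for the usual Borel structure on $\mathbb T$. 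I would settle it by composing the map $a\mapsto a^\delta$, which is Borel by Lemma~\ref{lem:sc-countable-jumps}, with the homeomorphism $\mathbb T^{\bowtie}\to E(X,G)$, $a^\delta\mapsto P^\delta_{\varepsilon,a}$, from Lemma~\ref{prop:sc-E-classification}.

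To show $\mu_+=\mu_-$, fix $\varepsilon$ and $F$ and put $f(a^\delta):=F(P^\delta_{\varepsilon,a})$. By the homeomorphism just cited, $f\in C(\mathbb T^{\bowtie})$. Lemma~\ref{lem:sc-countable-jumps} then says $f(a^+)=f(a^-)$ off a countable set, which is $da$-null. Hence the two integrals in \eqref{eq:sc-decompression} agree for $\delta=\pm$.

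For $G$-invariance it suffices to check the generators $P_{\sigma,d}$. By the product formula, $P_{\sigma,d}P^\delta_{\varepsilon,a}=P^{\delta^\sigma}_{\sigma\varepsilon,d+\sigma a}$. Substituting into \eqref{eq:sc-decompression} therefore gives
\[
\int F(P_{\sigma,d}\eta)\,d\mu_\delta(\eta)=\tfrac12\sum_\varepsilon\int F\bigl(P^{\delta^\sigma}_{\sigma\varepsilon,d+\sigma a}\bigr)\,da .
\]
The substitutions $\varepsilon\mapsto\sigma\varepsilon$ and $a\mapsto d+\sigma a$ preserve counting measure and $da$, so the right-hand side equals $\int F\,d\mu_{\delta^\sigma}$. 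Since $\mu_{\delta^\sigma}=\mu_\delta$, invariance follows. Alternatively, $G$ is amenable, so $M$ is amenable, and Theorem~\ref{thm:haar-decompression-amenability} applies directly; I will keep the explicit computation because it is self-contained.

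For the statement about $X$, first compute $(\ev_{x_0})_*\mu_\delta$ from \eqref{eq:sc-decompression} with $F=\varphi\circ\ev_{0^+}$. Since $P^\delta_{\varepsilon,a}(0^+)=a^\delta$, this gives $\int\varphi\,d(\ev_{x_0})_*\mu_\delta=\int_{\mathbb T}\varphi(a^\delta)\,da$. By the countable-jump argument again, this equals $\tfrac12\int(\varphi(a^+)+\varphi(a^-))\,da$. The pushforward is invariant because $\mu_\delta$ is, so $X$ is amenable.

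Unique ergodicity needs one of two routes. $X$ is minimal, since the orbit of every point is dense ($D$ is dense and both sides are reached), and it is tame, so Theorem~\ref{thm:nonmetric-unique-ergodicity} gives it immediately. If minimality is doubtful, I would instead argue directly: any invariant $\nu$ satisfies $\pi_*\nu=da$ by unique ergodicity of the rotation, and $\pi$ is injective off... it is two-to-one everywhere, so that alone does not suffice; invariance under $J$ would have to be used to balance the two sides. The main obstacle is this step, confirming minimality of the split-circle action. I expect it to follow since the basic clopen arcs $[a^+,b^-]$ contain orbit points of every $t^\pm$. Then $\nu_X$ is the common pushforward, which proves \eqref{eq:sc-nuX}.
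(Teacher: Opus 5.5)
Your argument is correct. For \eqref{eq:sc-decompression}, for $\mu_+=\mu_-$, for $G$-invariance and for the evaluation identity it is the paper's proof; your measurability remark only makes explicit what the paper leaves implicit. The one genuine difference is unique ergodicity.

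You obtain it from Theorem~\ref{thm:nonmetric-unique-ergodicity} after checking minimality, and your sketch does check it. Every nonempty open set contains an arc $[a^+,b^-]$ with $a\neq b$, hence every $s^\pm$ with $s$ in the open arc from $a$ to $b$. Since $t+D$ meets that arc, already every $\langle R\rangle$-orbit is dense.

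The paper argues directly instead. An invariant $\nu$ is atomless because $R$-orbits are infinite, and $\pi_*\nu=da$ by unique ergodicity of the rotation. By Lemma~\ref{lem:sc-countable-jumps}, each $\varphi\in C(X)$ coincides with $\bar\varphi\circ\pi$, where $\bar\varphi(a):=\varphi(a^+)$, off the countable set $\pi^{-1}[J_\varphi]$. Hence $\int\varphi\,d\nu=\int_{\mathbb T}\bar\varphi\,da$ for every such $\nu$.

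The paper's route is elementary and uses neither minimality nor tameness; it even gives unique ergodicity of the $\langle R\rangle$-action alone. It also keeps the example independent of the theory it is meant to illustrate. Your route is shorter on the page, but it imports the whole machinery behind Theorem~\ref{thm:nonmetric-unique-ergodicity}: Glasner's almost one-to-one structure theorem, regularity of that extension, and the separable reduction. The separable reduction is genuinely needed here, since $\mathbb T^{\bowtie}$ is not metrizable.

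The paper's argument also shows that the obstacle you feared in your fallback route is not real. No appeal to $J$ is needed to balance the two sides of the splitting. Continuous functions see the splitting only over a countable set, which an atomless measure ignores. That is exactly the countable-jump lemma you already used to prove $\mu_+=\mu_-$.
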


\begin{proof}
The identification $K_\delta\cong O(2)$ gives
\eqref{eq:sc-decompression}. For fixed $\varepsilon$, the restriction of
$F$ to the double circle
$$
\{P^\delta_{\varepsilon,a}:
  \delta\in\{+,-\},\ a\in\mathbb T\}
$$
is continuous. Lemma~\ref{lem:sc-countable-jumps} therefore shows that
$F(P^+_{\varepsilon,a})=F(P^-_{\varepsilon,a})$
for Lebesgue-almost every $a$, and hence
$\mu_+=\mu_-$.
The product formulas imply
$$
(P_{\sigma,d}\circ -)_*\mu_\delta
=
\mu_{\delta^\sigma},
$$
so the common decompression is $G$-invariant.

Since
$P^\delta_{\varepsilon,a}(x_0)=a^\delta$,
formula~\eqref{eq:sc-decompression} and Lemma \ref{lem:sc-countable-jumps} give the evaluation identity and \eqref{eq:sc-nuX}.

Finally, let $\nu$ be any $G$-invariant probability measure on $X$. It is
atomless because all $R$-orbits are infinite, and $\pi_*\nu$ is Lebesgue
measure by unique ergodicity of the irrational rotation. For
$\varphi\in C(X)$, put
$\bar\varphi(a):=\varphi(a^+)$.
The function $\bar\varphi$ is Borel, and
$$
\varphi(x)
=
\bar\varphi(\pi(x))
$$
outside the countable union of fibres over $J_\varphi$. Since $\nu$ is
atomless, this exceptional set is $\nu$-null. Thus,
$$
\int_X\varphi\,d\nu
=
\int_{\mathbb T}\bar\varphi(a)\,da,
$$
which is independent of $\nu$. Hence, $\nu=\nu_X$.
\end{proof}

We emphasize that the individual Ellis groups are not $G$-invariant:
the rotation preserves each of them, whereas the reflection interchanges
them,
$$
R\circ K_\delta=K_\delta,
\qquad
J\circ K_\delta=K_{\delta^{-1}}.
$$
Nevertheless, their common decompression
$\mu_E=(j_+)_*\mathfrak h_+=(j_-)_*\mathfrak h_-$
is $G$-invariant.

\subsection{Necessity of tameness}
\label{sec:A3}
The following example shows that hereditary amenability alone does not
imply amenability of the Ellis flow, so the tameness hypothesis in
Theorem~\ref{thm:hereditary-amenability-finite-powers} and
Corollary~\ref{cor:ellis-amenability-characterization} is indispensable.

\begin{proposition}
\label{prop:hereditarily-amenable-nonamenable-ellis}
There is a metrizable hereditarily amenable flow $(X,G)$ such
that $(E(X,G),G)$ is not amenable.
\end{proposition}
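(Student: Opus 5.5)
By Theorem~\ref{thm:ellis-flow-amenable}, it suffices to build a metrizable flow $(X,G)$ that is hereditarily amenable but has a finite power $X^n$ (already $n=2$) with a non-amenable subflow. Necessarily $X$ is non-tame, by Theorem~\ref{thm:hereditary-amenability-finite-powers}. The natural plan is to take $G$ non-amenable, say $G=F_2$ free on $a,b$, and to make every subflow of $X$ carry an invariant measure. We would do this by forcing every minimal subflow of $X$ to contain a $G$-fixed point, or more flexibly by requiring that every point of $X$ have a fixed point in its orbit closure. The key trick is to choose $X$ so that all the non-amenability sits in the diagonal joint behaviour of two coordinates.

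Concretely, we would take a subshift $X\subseteq\{0,1\}^{G}$, or a compact space of closed subsets. Each point of $X$ should be ``sparse'' enough that $\overline{Gx}$ contains a constant configuration, hence a fixed point. This already gives hereditary amenability, since every subflow contains a fixed point and the Dirac mass there is invariant. A clean candidate is a flow of the form $X=\{0\}\cup\{\text{pointed configurations}\}$ with $\overline{Gx}\ni 0$ for all $x$. We then need two points $x,y\in X$ whose pair $(x,y)\in X^2$ has an orbit closure $W$ that avoids every fixed point and supports no invariant measure. For $W$ to avoid all fixed points, we should arrange that $x$ and $y$ are ``complementary'': whenever $g_ix\to 0$, the sequence $g_iy$ stays away from every fixed point, and vice versa. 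A convenient realization is to use $x$ and its complement-type partner $y=1-x$ inside a flow that contains both $0$ and $1$ as fixed points. Then in $X^2$ the pair $(x,1-x)$ has orbit closure inside $\{(z,1-z)\}$, which is a copy of the full shift orbit of $x$ and contains no diagonal fixed points of the form $(0,0)$ or $(1,1)$, but could contain $(0,1)$. To rule this out, we would instead encode in $W$ a non-amenable minimal structure. For example, $W$ could be chosen to factor onto the boundary action of $F_2$ on $\partial F_2$, whose only invariant-measure obstruction is classical: $\partial F_2$ carries no $F_2$-invariant probability.

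So the plan is to pick $X$ whose points are pairs ``(a boundary point $\xi\in\partial F_2$, a bit pattern)''. The bit coordinate should make each single point converge under suitable group elements to a fixed point, while the product $X^2$ contains a subflow mapping equivariantly onto $\partial F_2$. The pushforward of any invariant measure on that subflow would then give an invariant measure on $\partial F_2$, which is impossible.

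The main obstacle is reconciling these two requirements. Every subflow of $X$ must contain a fixed point, yet $X$ may not itself map equivariantly onto $\partial F_2$. Any such factor would pass non-amenability down to a subflow of $X$ (take a minimal subflow: it maps onto the minimal $\partial F_2$). The boundary must therefore only appear as a ``relative'' invariant of pairs, for instance via a cocycle or difference between coordinates, such as $(x,y)\mapsto$ the set of group elements where $x$ and $y$ disagree. I would first try $X\subseteq\{0,1\}^{G}$ consisting of indicator functions of sets that are either empty or cosets/half-trees of $F_2$, together with their translates and limits. A single half-tree indicator can be pushed to $0$ or to $1$ by translation. A pair of complementary half-trees encodes an edge of the tree, and an invariant measure on the closure of such pairs would give an invariant measure on oriented edges of the tree with its limits, i.e. on $\partial F_2$ together with edges, which is then shown to be impossible by a paradoxical-decomposition argument. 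The remaining checks are compactness and metrizability, which hold because $G$ is countable.
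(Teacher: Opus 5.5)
Your reduction via Theorem~\ref{thm:ellis-flow-amenable} to finding a non-amenable subflow of $X^2$ is correct. Your idea of letting the boundary appear only through the disagreement set of two coordinates is in fact the paper's difference map $(x,y)\mapsto y-x$ over $C_2$. The gap is the mechanism you chose for hereditary amenability. If every subflow of $X$ contains a $G$-fixed point, then so does every subflow of every $X^n$, so the plan fails for every choice of pairs.

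To see this, let $W\subseteq X^2$ be a subflow. Then $\pr_1[W]$ is a subflow of $X$, so it contains a fixed point $p$; pick $y$ with $(p,y)\in W$. Since $p$ is fixed, $\{p\}\times\overline{Gy}=\overline{G(p,y)}\subseteq W$. Moreover $\overline{Gy}$ contains a fixed point $q$, so $(p,q)\in W$ is fixed and the Dirac measure at $(p,q)$ is invariant. For $X^n$, induct using the fibre $\{\bar y\in X^{n-1}:(p,\bar y)\in W\}$, which is a subflow because $p$ is fixed. Hence such an $X$ is finitely power-hereditarily amenable, and $E(X,G)$ is amenable.

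Your half-tree candidate shows exactly the symptom you already noticed with $(0,1)$. Suppose the separating edge of a half-tree $T$ leaves every finite subset of the Cayley tree. Then every finite window of $G$ eventually lies on one side of it. So in $\{0,1\}^G\times\{0,1\}^G$ the pairs $(\mathbf 1_T,\mathbf 1_{G\setminus T})$ accumulate only at the fixed points $(0,1)$ and $(1,0)$. The product topology forgets which end the edges escape to, so no copy of $\partial F_2$ arises. Your closing claim about an invariant measure on ``edges together with $\partial F_2$'' therefore does not apply.

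The missing idea is that hereditary amenability must come from a non-trivial minimal flow carrying a diffuse invariant measure, while a non-amenable group still acts. The paper does this by enlarging the group. It takes $X=C_2^{H\times S}$ with $H=F_2$, and $G=D\rtimes H$, where $D$ is the dense subgroup of finitely supported elements acting by translation. Density of $D$ makes $X$ minimal. Haar measure on $X$ is invariant under both the translations and the $H$-shift, so $X$ is hereditarily amenable. Translations by $D$ cancel in $y-x$, so $\delta(x,y)=y-x$ satisfies $\delta(gx,gy)=h\cdot\delta(x,y)$ for $g=(d,h)$. With $\iota(\xi)(q,s)=\mathbf 1_{[s]}(q^{-1}\xi)$, the set $W=\delta^{-1}[\iota(\partial H)]$ is a subflow of $X^2$ factoring onto $\partial H$, which carries no invariant probability measure.
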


\begin{proof}
Let $H:=F_2=\langle a,b\rangle$ and $S:=\{a,a^{-1},b,b^{-1}\}$.
We identify the boundary $\partial H$ with the compact space of
infinite reduced words in the alphabet $S$, equipped with the cylinder
topology and the natural left $H$-action. For $s\in S$, let
$[s]\subseteq\partial H$ denote the cylinder of words beginning with
$s$.

Put $C_2:=\mathbb Z/2\mathbb Z$, $X:=C_2^{H\times S}$,
and let $H$ act by
$$(h\cdot x)(q,s):=x(h^{-1}q,s).$$
Let
$$D:=\bigoplus_{(q,s)\in H\times S} C_2=\{d\in X:\supp(d)\text{ is finite}\}$$
be the dense subgroup of finitely supported functions, and put $G:=D\rtimes H$.
The affine action is
$$(d,h)\cdot x:=d+h\cdot x.$$
Every $D$-orbit is dense in $X$, so the $G$-flow $X$ is minimal.
The normalized Haar measure on the compact group $X$ is invariant under
translations by $D$ and under the coordinate permutations induced by
$H$. Thus the $G$-flow $X$ is amenable. 
Since it is minimal, its only nonempty
subflow is $X$ itself, and hence $(X,G)$ is hereditarily amenable.

Define
$$\iota:\partial H\longrightarrow X,
\qquad
\iota(\xi)(q,s):=\mathbf 1_{[s]}(q^{-1}\xi).$$
This map is continuous and $H$-equivariant. 
It is easy too see that $\iota$ is injective; since
$\partial H$ is compact and $X$ is Hausdorff, it is an embedding.
Put $Y:=\iota[\partial H]$.
The $H$-flow $Y\cong\partial H$ has no invariant probability measure.
Indeed, for every $s\in S$,
$$s^{-1}[s]=\partial H\setminus[s^{-1}].$$
If $\lambda$ were an invariant Borel probability measure, 
then $\lambda([s])+\lambda([s^{-1}])=1$.
Taking $s=a$ and $s=b$ would make the sum of the measures of the four
first-letter cylinders equal to $2$, although these cylinders
partition $\partial H$.

Now, we give $Y$ the $G$-action induced by the quotient
$G\twoheadrightarrow H$, so that $D$ acts trivially. Define
$$\delta:X^2\longrightarrow X,\qquad\delta(x,y):=y-x,$$
and put $W:=\delta^{-1}[Y]$.
For $(d,h)\in G$,
$$\delta\bigl((d,h)\cdot x,(d,h)\cdot y\bigr)=h\cdot\delta(x,y).$$
Therefore, $W$ is a $G$-subflow and $\delta|_W:W\longrightarrow Y$
is a continuous $G$-map. It is onto, since
$\delta(0,y)=y$ for every $y\in Y$. Thus it is a factor map.

If $W$ carried a $G$-invariant probability measure, its pushforward
under $\delta|_W$ would be a $G$-invariant probability measure on
$Y$, a contradiction. Hence, the second power $X^2$ contains a
nonamenable subflow, so $(X,G)$ is not finitely
power-hereditarily amenable. By
Theorem~\ref{thm:ellis-flow-amenable}, the Ellis flow
$(E(X,G),G)$ is not amenable. In particular, $(X,G)$ is not tame by Corollary \ref{cor:ellis-amenability-characterization}.
\end{proof}

\printbibliography

@article{newelski09,
  title={Topological dynamics of definable group actions},
  author={Newelski, Ludomir},
  journal={The Journal of Symbolic Logic},
  doi={10.2178/jsl/1231082302},
  volume={74},
  number={1},
  pages={50--72},
  year={2009},
  publisher={Cambridge University Press}
}

@article{newelski12,
  title={Model theoretic aspects of the {E}llis semigroup},
  author={Newelski, Ludomir},
  journal={Israel Journal of Mathematics},
  volume={190},
  number={1},
  pages={477--507},
  year={2012},
  publisher={Springer},
  doi={10.1007/s11856-011-0202-6},
}

@article{KruPil17,
	title        = {Generalised {B}ohr compactification and model-theoretic connected components},
	author       = {Krupiński, Krzysztof and Pillay, Anand},
	year         = 2017,
	journal      = {Mathematical Proceedings of the Cambridge Philosophical Society},
	publisher    = {Cambridge University Press},
	volume       = 163,
	number       = 2,
	pages        = {219–249},
	doi          = {10.1017/S0305004116000967}
}

@article{GM,
  author       = {Glasner, Eli and Megrelishvili, Michael},
  title        = {Todor{\v{c}}evi{\'c}'s trichotomy and a hierarchy in the class of tame dynamical systems},
  journaltitle = {Trans. Amer. Math. Soc.},
  volume       = {375},
  number       = {7},
  year         = {2022},
  pages        = {4513--4548},
  doi          = {10.1090/tran/8522}
}

@phdthesis{rzepecki2018,
  author      = {Rzepecki, Tomasz},
  title       = {Bounded Invariant Equivalence Relations},
  institution = {University of Wroc{\l}aw},
  location    = {Wroc{\l}aw},
  year        = {2018},
  eprinttype  = {arXiv},
  eprint     = {1810.05113}
}

@article{GlasnerMegrelishviliUspenskij,
  author       = {Glasner, Eli and Megrelishvili, Michael and Uspenskij, Vladimir V.},
  title        = {On metrizable enveloping semigroups},
  journaltitle = {Israel J. Math.},
  volume       = {164},
  year         = {2008},
  pages        = {317--332},
  doi          = {10.1007/s11856-008-0032-3}
}

@book{Auslander,
  author    = {Auslander, Joseph},
  title     = {Minimal Flows and Their Extensions},
  series    = {North-Holland Mathematics Studies},
  volume    = {153},
  publisher = {North-Holland},
  location  = {Amsterdam},
  year      = {1988}
}

@incollection{GM23,
  author       = {Glasner, Eli and Megrelishvili, Michael},
  title        = {More on tame dynamical systems},
  booktitle    = {Ergodic Theory and Dynamical Systems in Their Interactions with Arithmetics and Combinatorics},
  editor       = {Ferenczi, S{\'e}bastien and Ku{\l}aga-Przymus, Joanna and Lema{\'n}czyk, Mariusz},
  series       = {Lecture Notes in Mathematics},
  volume       = {2213},
  publisher    = {Springer},
  location     = {Cham},
  year         = {2018},
  pages        = {351--392},
  doi          = {10.1007/978-3-319-74908-2_18}
}

@article{Iba16,
	title        = {The dynamical hierarchy for {R}oelcke precompact {P}olish groups},
	author       = {Ibarlucía, Tom\'{a}s},
	year         = 2016,
	journal      = {Israel J. Math.},
	volume       = 215,
	number       = 2,
	pages        = {965--1009},
	doi          = {10.1007/s11856-016-1399-1},
	issn         = {0021-2172,1565-8511},
	fjournal     = {Israel Journal of Mathematics},
	mrclass      = {03C45 (03C15 03C50 22A10 37B05 37C85)},
	mrnumber     = 3552300,
	mrreviewer   = {G.\ Cherlin}
}

@article{KrRz,
  author       = {Krupi{\'n}ski, Krzysztof and Rzepecki, Tomasz},
  title        = {Galois groups as quotients of {P}olish groups},
  journaltitle = {J. Math. Log.},
  volume       = {20},
  number       = {3},
  year         = {2020},
  eid          = {2050018},
  pagetotal    = {48},
  doi          = {10.1142/S021906132050018X}
}

@article{KerrLi,
  author       = {Kerr, David and Li, Hanfeng},
  title        = {Independence in topological and {$C^*$}-dynamics},
  journaltitle = {Math. Ann.},
  volume       = {338},
  number       = {4},
  year         = {2007},
  pages        = {869--926},
  doi          = {10.1007/s00208-007-0097-z}
}

@article{Glasner1,
  author       = {Glasner, Eli},
  title        = {On tame dynamical systems},
  journaltitle = {Colloq. Math.},
  volume       = {105},
  number       = {2},
  year         = {2006},
  pages        = {283--295},
  doi          = {10.4064/cm105-2-9}
}

@article{Glasner2,
  author       = {Glasner, Eli},
  title        = {The structure of tame minimal dynamical systems for general groups},
  journaltitle = {Invent. Math.},
  volume       = {211},
  number       = {1},
  year         = {2018},
  pages        = {213--244},
  doi          = {10.1007/s00222-017-0747-z}
}

@article{GLASNER_2007,
  author       = {Glasner, Eli},
  title        = {The structure of tame minimal dynamical systems},
  journaltitle = {Ergodic Theory Dynam. Systems},
  volume       = {27},
  number       = {6},
  year         = {2007},
  pages        = {1819--1837},
  doi          = {10.1017/S0143385707000296}
}

@article{Glasner3,
  author       = {Glasner, Eli and Megrelishvili, Michael},
  title        = {Circularly ordered dynamical systems},
  journaltitle = {Monatsh. Math.},
  volume       = {185},
  number       = {3},
  year         = {2018},
  pages        = {415--441},
  doi          = {10.1007/s00605-017-1134-y}
}

@article{Huang,
  author       = {Huang, Wen},
  title        = {Tame systems and scrambled pairs under an abelian group action},
  journaltitle = {Ergodic Theory Dynam. Systems},
  volume       = {26},
  number       = {5},
  year         = {2006},
  pages        = {1549--1567},
  doi          = {10.1017/S0143385706000198}
}

@article{Simon_2015,
  author       = {Simon, Pierre},
  title        = {Rosenthal compacta and {NIP} formulas},
  journaltitle = {Fund. Math.},
  volume       = {231},
  number       = {1},
  year         = {2015},
  pages        = {81--92},
  doi          = {10.4064/fm231-1-5}
}

@article{ArtemPierre,
	title        = {Definably amenable {NIP} groups},
	author       = {Chernikov, Artem and Simon, Pierre},
	year         = 2018,
	journal      = {J. Amer. Math. Soc.},
	volume       = 31,
	number       = 3,
	pages        = {609--641},
	doi          = {10.1090/jams/896},
	issn         = {0894-0347,1088-6834},
	fjournal     = {Journal of the American Mathematical Society},
	mrclass      = {03C45 (03C60 03C64 22F10 28D15 37B05)},
	mrnumber     = 3787403,
	mrreviewer   = {Rahim\ Nazim\ Moosa}
}

@article{HruKruPi,
  author       = {Hrushovski, Ehud and Krupi{\'n}ski, Krzysztof and Pillay, Anand},
  title        = {On first order amenability},
  journaltitle = {Selecta Math. (N.S.)},
  volume       = {32},
  year         = {2026},
  eid          = {23},
  doi          = {10.1007/s00029-026-01125-1}
}

@book{Kechris95,
  author    = {Kechris, Alexander S.},
  title     = {Classical Descriptive Set Theory},
  series    = {Graduate Texts in Mathematics},
  volume    = {156},
  publisher = {Springer-Verlag},
  location  = {New York},
  year      = {1995},
  doi       = {10.1007/978-1-4612-4190-4}
}

@article{KruPi23,
  author  = {Krupi{\'n}ski, Krzysztof and Pillay, Anand},
  title   = {Generalized Locally Compact Models for Approximate Groups},
  journal = {Adv. Math.},
  volume  = {502},
  year    = {2026},
  pages   = {111154},
  doi     = {https://doi.org/10.1016/j.aim.2026.111154}
}

@article{KruPiRze,
  author       = {Krupi{\'n}ski, Krzysztof and Pillay, Anand and Rzepecki, Tomasz},
  title        = {Topological dynamics and the complexity of strong types},
  journaltitle = {Israel J. Math.},
  volume       = {228},
  number       = {2},
  year         = {2018},
  pages        = {863--932},
  doi          = {10.1007/s11856-018-1780-3}
}

@article{CPS14,
author = {Chernikov, Artem and Pillay, Anand and Simon, Pierre},
title = {External definability and groups in NIP theories},
journal = {Journal of the London Mathematical Society},
volume = {90},
number = {1},
pages = {213-240},
doi = {10.1112/jlms/jdu019},
year = {2014}
}

@article{Krupinski2019,
  author       = {Krupi{\'n}ski, Krzysztof and Newelski, Ludomir and Simon, Pierre},
  title        = {Boundedness and absoluteness of some dynamical invariants in model theory},
  journaltitle = {J. Math. Log.},
  volume       = {19},
  number       = {2},
  year         = {2019},
  eid          = {1950012},
  pagetotal    = {55},
  doi          = {10.1142/S0219061319500120}
}

@article{Krupinski2019a,
  author       = {Krupi{\'n}ski, Krzysztof and Pillay, Anand},
  title        = {Amenability, definable groups, and automorphism groups},
  journaltitle = {Adv. Math.},
  volume       = {345},
  year         = {2019},
  pages        = {1253--1299},
  doi          = {10.1016/j.aim.2019.01.033}
}

@book{Glasner1976,
  author    = {Glasner, Shmuel},
  title     = {Proximal Flows},
  series    = {Lecture Notes in Mathematics},
  volume    = {517},
  publisher = {Springer-Verlag},
  location  = {Berlin},
  year      = {1976},
  doi       = {10.1007/BFb0080139}
}

@article{GismatullinPenazziPillay2015Some,
  author  = {Gismatullin, Jakub and Penazzi, Davide and Pillay, Anand},
  title   = {Some model theory of {$\mathrm{SL}(2,\mathbb{R})$}},
  journal = {Fundamenta Mathematicae},
  year    = {2015},
  volume  = {229},
  number  = {2},
  pages   = {117--128},
  doi     = {10.4064/fm229-2-2}
}

@book{LMNS10,
  author    = {Luke{\v{s}}, Jaroslav and Mal{\'y}, Jan and Netuka, Ivan and Spurn{\'y}, Ji{\v{r}}{\'i}},
  title     = {Integral Representation Theory: Applications to Convexity, Banach Spaces and Potential Theory},
  series    = {De Gruyter Studies in Mathematics},
  volume    = {35},
  publisher = {Walter de Gruyter},
  location  = {Berlin},
  year      = {2010},
  doi       = {10.1515/9783110203219}
}

@article{FGJO21,
  author  = {Fuhrmann, Gabriel and Glasner, Eli and J{\"a}ger, Tobias
             and Oertel, Christian},
  title   = {Irregular model sets and tame dynamics},
  journal = {Transactions of the American Mathematical Society},
  year    = {2021},
  volume  = {374},
  number  = {5},
  pages   = {3703--3734},
  doi     = {10.1090/tran/8349}
}

@article{Romanov16,
  author       = {Romanov, Alexey V.},
  title        = {Ergodic properties of discrete dynamical systems and enveloping semigroups},
  journaltitle = {Ergodic Theory Dynam. Systems},
  volume       = {36},
  number       = {1},
  year         = {2016},
  pages        = {198--214},
  doi          = {10.1017/etds.2014.62}
}

@article{Romanov19,
  author       = {Romanov, Alexey V.},
  title        = {Ergodic properties of tame dynamical systems},
  journaltitle = {Math. Notes},
  volume       = {106},
  year         = {2019},
  pages        = {286--295},
  doi          = {10.1134/S0001434619070319}
}

@book{Phelps2001,
  author    = {Phelps, Robert R.},
  title     = {Lectures on Choquet's Theorem},
  edition   = {2},
  series    = {Lecture Notes in Mathematics},
  volume    = {1757},
  publisher = {Springer},
  location  = {Berlin},
  year      = {2001},
  doi       = {10.1007/b76887}
}

@article{CGK,
    author  = {Chernikov, Artem and Gannon, Kyle and Krupi{\'n}ski, Krzysztof},
    title   = {Definable Convolution and Idempotent Keisler Measures III: Generic Stability, Generic Transitivity, and Revised Newelski's Conjecture},
    journal = {J. Lond. Math. Soc.},
    volume  = {114},
    number  = {1},
    year    = {2026},
    pages   = {e70639},
    doi     = {10.1112/jlms.70639}
}

@unpublished{BaZu,
  author     = {Basso, Gianluca and Zucker, Andy},
  title      = {Topological groups with tractable minimal dynamics},
  year       = {2024},
  eprinttype = {arXiv},
  eprint     = {2412.05659},
  note       = {Submitted}
}

@online{CodHoff23,
  author     = {Codenotti, Alessandro and Hoffmann, Daniel Max},
  title      = {Ranks in {E}llis semigroups and model theory},
  year       = {2023},
  eprinttype = {arXiv},
  eprint     = {2308.05477}
}

@online{HRz,
  author       = {Hoffmann, Daniel Max and Rzepecki, Tomasz},
  title        = {On idempotent measure conjecture and decomposition of
                  invariant measures},
  year         = {2025},
  eprinttype   = {arxiv},
  eprint       = {2511.22945}
}

@article{AlfsenBorelSimplex,
  author  = {Alfsen, Erik M.},
  title   = {A note on the {B}orel structure of a metrizable
             {C}hoquet simplex and of its extreme boundary},
  journal = {Mathematica Scandinavica},
  volume  = {19},
  year    = {1966},
  pages   = {161--171},
  doi     = {10.7146/math.scand.a-10805}
}

@article{LindenstraussOlsenSternfeld,
  author  = {Lindenstrauss, Joram and Olsen, Gunnar and Sternfeld, Y.},
  title   = {The {Poulsen} simplex},
  journal = {Annales de l'Institut Fourier},
  volume  = {28},
  number  = {1},
  year    = {1978},
  pages   = {91--114},
  doi     = {10.5802/aif.682}
}

@article{KLM21,
  author       = {Krupi{\'n}ski, Krzysztof and Lee, Junguk and Moconja, Slavko},
  title        = {Ramsey theory and topological dynamics for first order theories},
  journaltitle = {Trans. Amer. Math. Soc.},
  volume       = {375},
  number       = {4},
  year         = {2022},
  pages        = {2553--2596},
  doi          = {10.1090/tran/8594}
}

@article{GlasnerMegrelishvili2012,
  author  = {Glasner, Eli and Megrelishvili, Michael},
  title   = {Representations of dynamical systems on Banach spaces
             not containing {$\ell_1$}},
  journal = {Trans. Amer. Math. Soc.},
  volume  = {364},
  number  = {12},
  pages   = {6395--6424},
  year    = {2012},
  doi     = {10.1090/S0002-9947-2012-05549-8}
}

@book{MurphyCstar,
  author    = {Murphy, Gerard J.},
  title     = {{$C^*$-Algebras and Operator Theory}},
  publisher = {Academic Press},
  location  = {Boston},
  year      = {1990}
}

@incollection{GlasnerMegrelishvili2013,
  author    = {Glasner, Eli and Megrelishvili, Michael},
  title     = {Banach Representations and Affine Compactifications of Dynamical Systems},
  booktitle = {Asymptotic Geometric Analysis: Proceedings of the Fall 2010 Fields Institute Thematic Program},
  editor    = {Ludwig, Monika and Milman, Vitali D. and Pestov, Vladimir and Tomczak-Jaegermann, Nicole},
  series    = {Fields Institute Communications},
  volume    = {68},
  pages     = {75--144},
  publisher = {Springer},
  location  = {New York},
  year      = {2013},
  doi       = {10.1007/978-1-4614-6406-8_6},
  eprinttype = {arxiv},
  eprint    = {1204.0432}
}

@unpublished{HoffKru2,
  author     = {Hoffmann, Daniel Max and Krupiński, Krzysztof},
  title      = {On tame amenable flows and generalized Newelski's conjecture},
  year       = {2026},
  note       = {in preparation}
}
\end{document}